\documentclass[a4paper,11pt,reqno,pdftex]{amsart}

\usepackage{amsmath,amssymb,amsfonts,amsthm}
\usepackage{ifthen}
\usepackage{mathtools}
\mathtoolsset{showonlyrefs,showmanualtags}
\usepackage{latexsym}
\usepackage{mathrsfs}
\usepackage[inline]{enumitem}
\newlist{assumplist}{enumerate}{1}
\setlist[assumplist]{label=(A\arabic*), ref=A\arabic*}
\newlist{Kolmogorovconditions}{enumerate}{1}
\setlist[Kolmogorovconditions]{label=(K\arabic*), ref=K\arabic*}
\newlist{ChowTeicher}{enumerate}{1}
\setlist[ChowTeicher]{label=(CT), ref=CT}
\newcommand{\descitem}[1]{%
  \refstepcounter{enumi}
  \item[(\roman{enumi})\ #1]
}
\usepackage{tikz,xcolor}
\usepackage{comment}
\numberwithin{equation}{section}

\theoremstyle{definition}
\newtheorem{theorem}{Theorem}[section]

\newtheorem{lemma}[theorem]{Lemma}
\newtheorem{corollary}[theorem]{Corollary}
\newtheorem{claim}[theorem]{Claim}
\newtheorem{remark}[theorem]{Remark}

\newcommand{\dlimit}{%
  \stackrel{\mathrm{d}}{\to}%
}
\newcommand{\floor}[1]{%
  \left\lfloor#1 \right\rfloor%
}
\newcommand{\ceil}[1]{%
  \left\lceil#1 \right\rceil%
}
\newcommand{\onetorus}{%
  \mathbb{T}^{1}%
}%
\newcommand{\Borelsigma}{%
  \mathcal{B}%
}%
\newcommand{\localmodulusofcont}{%
  \sigma_{l}%
}%
\newcommand{\seqspace}{%
  \mathbb{R}^{\mathbb{N}}%
}%

\title[ASIP for Takagi--van der Waerden class functions]{An almost sure invariance principle for the Takagi--van der Waerden class functions}
\author[Y. Nakano]{Yuzaburo Nakano}
\address{Graduate School of Engineering Science, Yokohama National University, Yokohama, Japan}
\email{nakano-yuzaburo-zg@ynu.jp}
\keywords{Nowhere differentiable functions; Probabilistic method; Elephant random walks; Almost sure invariance principles.}
\date{}

\begin{document}

\begin{abstract}
  The Takagi--van der Waerden functions are a well-known class of continuous but nowhere differentiable functions. In this paper, we study their weighted versions, the Takagi--van der Waerden class functions $f_{r,a}(x)$, from a probabilistic point of view. We prove that the local modulus of continuity of $f_{r,a}(x)$ is described by a standard Brownian motion under some regularity assumptions on the weights, as an application of a strong approximation for elephant random walks remembering the very recent past with variable step length.
\end{abstract}

\maketitle
\section{Introduction}


The \emph{Takagi--van der Waerden class functions} are defined by
\begin{equation}\label{eq:TvdWClass}
  f_{r,a}(x) := \sum_{k=1}^{\infty} \frac{a_k}{r^{k-1}}d(r^{k-1}x)
\end{equation}
for each integer $r\ge 2$, where $d(x)$ denotes the distance from $x$ to its nearest integer, and $\{a_k\colon k\ge 1\}$ is a deterministic real sequence satisfying
\begin{equation}\label{eq:conti_condition}
  \sum_{k=1}^{\infty} \frac{|a_k|}{r^{k-1}} < +\infty.
\end{equation}
If \eqref{eq:conti_condition} holds, then the series \eqref{eq:TvdWClass} converges uniformly, and are continuous. Remarkably, \eqref{eq:conti_condition} is also a necessary condition for the functions \eqref{eq:TvdWClass} to be continuous \cite[Theorem 1.2]{FerreraGomezGil20} (see also \cite{HataYamaguti84}).
When $a_k\equiv 1$, $f_{r,a}$ are a well-known class of continuous nowhere differentiable functions called the \emph{Takagi--van der Waerden functions}, say $f_r$ (see Fig.~\ref{fig:TakagivdWf2f3} for graphs of $f_2$ and $f_3$). The function $f_2$ was introduced by Takagi \cite{Takagi03}, and $f_{10}$ was independently studied by van der Waerden \cite{vanderWaerden30}. In \cite{NakanoTakeiTakagifunctions}, Nakano and Takei studied the modulus of continuity of $f_r$, and obtained a complete description of the differentiability properties of $f_{r,a}$ as applications of limit theorems for elephant random walks remembering the very recent past (ERWVRP). As the present paper is a continuation of the work \cite{NakanoTakeiTakagifunctions}, we refer the reader to its Introduction for detailed explanations on these topics.

\begin{figure}[t]
\label{fig:TakagivdWf2f3}
\begin{center}
\begin{tabular}{cc}
\includegraphics[width=5cm]{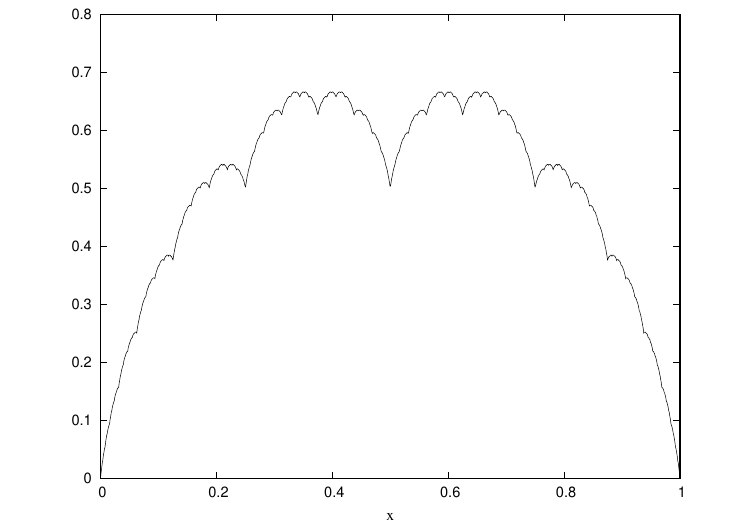}
&
\includegraphics[width=5cm]{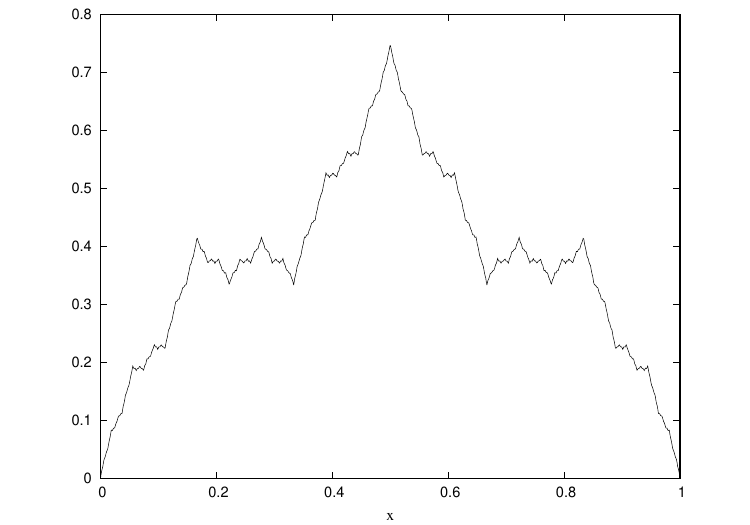}
\end{tabular}
\end{center}
\caption{The Takagi--van der Waerden functions $f_2(x)$ (left) and $f_3(x)$ (right).}
\end{figure}

The almost sure invariance principle (ASIP) is a powerful method in probability theory to prove limit theorems, such as the functional central limit theorem (FCLT), the law of the iterated logarithm (LIL), and upper and lower class results, among others. The ASIP states that one can construct an almost sure coupling between partial sums of random variables and a standard Brownian motion $\{B(t)\colon t\ge 0\}$ with sufficiently small error terms.
The ASIP was first introduced by Strassen \cite{Strassen64,Strassen67}. In \cite{Strassen64}, he proved that, for a sequence of independent identically distributed (i.i.d.) random variables $\{X_n\}_{n\ge 1}$ having mean $0$ and variance $1$, almost surely (a.s.),
\begin{equation}\label{eq:iid_ASIP_nloglogn}
  \sum_{k=1}^{n} X_k = B(n) + o(\sqrt{n\log\log n})\quad \text{as $n\to\infty$,}
\end{equation}
where, for real sequences $\{b_n\}_{n\ge 1}$ and $\{c_n\}_{n\ge 1}$, $b_n=o(c_n)$ as $n\to\infty$ means that $\lim_{n\to \infty} b_n/c_n =0$. The error term in \eqref{eq:iid_ASIP_nloglogn} is small enough to prove the Hartman--Wintner LIL \cite{HartmanWintner41}. After that, he extended his results to martingales with certain second moment conditions in \cite{Strassen67}. His proof relied on the Skorokhod embedding theorem (see Lemma \ref{lemma:Skorokhodembeddingtheorem} in Appendix \ref{appendix:some_technical_results}).

For dependent random variables, Philipp and Stout \cite{PhilippStout75ASIP} proved ASIPs for sums of weakly dependent random variables using blocking methods and the Skorokhod embedding theorem. For instance, they considered a trigonometric series defined on the Lebesgue probability space $(I , \Borelsigma(I), \mu)$ with the Hadamard gap of the form
\begin{equation}\label{eq:def_lacunary_series}
  f_n(\omega) := \sum_{k=1}^{n} a_k \cos(2\pi n_k \omega),
\end{equation}
where $n_{k+1}/n_{k} >q>1$, $I=[0,1)$, and $\Borelsigma(X)$ is the Borel $\sigma$-field of a set $X$.
They proved that there exists a richer probability space on which a standard Brownian motion $\{B(t)\colon t\ge 0\}$ and the sequence $\{\widetilde{f}_n\}_{n\ge 1}$ of random variables are defined such that $\{\widetilde{f}_n\}_{n\ge 1}$ has the same distribution as $\{f_n\}_{n\ge 1}$, and
\begin{equation}\label{eq:trigonometric_ASIP}
  \widetilde{f}_n = \frac{1}{\sqrt{2}}B(A_n) + o(A_n^{\frac{1}{2}-\lambda}) \quad \text{a.s.}
\end{equation}
for each $\lambda<\delta/32$ under the assumptions (see \cite[Chapter 6]{PhilippStout75ASIP}):
\begin{assumplist}[series=assumptions]
    \item $\displaystyle A_n := \sum_{k=1}^{n} a_k^2\to\infty$ as $n\to \infty$; and\label{Assump2:A_n_to_infinite}
    \item $a_n^2 =O (A_n^{1-\delta})$ as $n\to \infty$ for some $\delta \in (0,1]$,\label{Assump2:last_term}
\end{assumplist}
where, for real sequences $\{b_n\}_{n\ge 1}$ and $\{c_n\}_{n\ge 1}$, $b_n=O(c_n)$ as $n\to \infty$ means that $\displaystyle \limsup_{n\to\infty} \frac{|b_n|}{|c_n|} < +\infty$. We also use $b_n \ll c_n$ as $n\to \infty$ to mean that $b_n = O(c_n)$ as $n\to \infty$.
We note that $\{\widetilde{f}_n\}_{n\ge 1}$ is no longer $\{f_n\}_{n\ge 1}$.
Note that we can construct a standard Brownian motion on the Lebesgue probability space due to Chapter IX of Paley and Wiener \cite{PaleyWiener34AMS}. However, $\{\widetilde{f}_n\}_{n\ge 1}$ does not satisfy $\widetilde{f}_{k}(\cdot)-\widetilde{f}_{k-1}(\cdot) = a_k \cos (2\pi n_k\cdot)$ even if we construct a standard Brownian motion on the original Lebesgue probability space $(I, \Borelsigma(I), \mu)$.
In order to preserve the original sequence $\{f_n\}_{n=1}^{\infty}$, we take another Lebesgue probability space $([0,1),\Borelsigma([0,1)),\mu_0)$ on which $\{\widetilde{f}_n\}_{n\ge 1}$ and a standard Brownian motion $\{B(t) \colon t\ge 0\}$ are defined, and construct an enlarged probability space $(I\times [0,1), \Borelsigma(I)\otimes \Borelsigma([0,1)), \nu)$ and a random variable $U$ uniformly distributed over $I$ defined on $(I\times [0,1), \Borelsigma(I)\otimes \Borelsigma([0,1)), \nu)$ such that
\begin{equation}
  f_n(U) = \frac{1}{\sqrt{2}}B(A_n) + o(A_n^{\frac{1}{2}-\lambda}) \quad \text{$\nu$-a.s.,}\label{eq:trigonometric_ASIP_uniform}
\end{equation}
where $\Borelsigma(I)\otimes \Borelsigma([0,1))$ denotes the product $\sigma$-field, and $\nu$ is a coupling measure such that the relation \eqref{eq:trigonometric_ASIP_uniform} holds (see Section \ref{sec:coupling_with_BM} for more details).

In this paper, we establish a precise description of the modulus of continuity of $f_{r,a}(x)$ with the assumptions \eqref{Assump2:A_n_to_infinite} and \eqref{Assump2:last_term}; these assumptions imply \eqref{eq:conti_condition}, see Remark \ref{remark:A2_imply_conti_condition} for the proof. We show that increments of $f_{r,a}(x)$ are approximated by a standard Brownian motion with a sufficiently small error term. We prove this result as an application of the ASIP for the ERWVRP with variable step length defined as follows:
Let $\{X_k\colon k\ge 1\}$ be a $\{+1, -1\}$-valued Markov chain with
\begin{equation}
    P(X_1=+1)=P(X_1=-1)=\frac{1}{2},
\end{equation}
and, for $p\in (0,1)$ and $k\ge 1$,
\begin{equation}
  \begin{dcases}
    P(X_{k+1}=+1\mid X_k = +1) = P(X_{k+1}=-1\mid X_k= -1) = p\\
    P(X_{k+1}=-1\mid X_k = +1) = P(X_{k+1}=+1\mid X_k= -1) = 1-p.
  \end{dcases}
\end{equation}
Let $\{a_k\colon k\ge 1\}$ be a (deterministic) real sequence, and put
\begin{equation}\label{eq:Def_ERWVRP_with_variable_step_length}
  S_0:= 0\quad \text{and} \quad S_n := \sum_{k=1}^n a_k X_k\quad \text{for $n\ge 1$}.
\end{equation}
The stochastic process $\{S_n \colon n\ge 0\}$ is called the ERWVRP with memory parameter $p$ and variable step length.
The process $\{S_n\colon n\ge 0\}$ is a weighted sum of stationary weakly dependent random variables, which has a similar structure to a lacunary trigonometric series with weights defined by \eqref{eq:def_lacunary_series}. Indeed, owing to this structural similarity, we demonstrate that the proof technique by Philipp and Stout \cite{PhilippStout75ASIP} for the ASIP for a lacunary series with weights can be adapted to $\{S_n\colon n\ge 0\}$. Nakano and Takei \cite{NakanoTakeiTakagifunctions} pointed out that letting
\begin{equation}\label{eq:psi_definition}
  \psi_k(x):= \frac{1}{r^{k-1}}d(r^{k-1} x) \quad \text{for $k\ge 1$},
\end{equation}
and $\psi_k^{+}(x)$ be the right-hand derivative of $\psi_k$ at $x$,
\begin{equation}\label{eq:psi_sum}
  w_n (x) := \sum_{k=1}^{n} a_k \psi_k^{+}(x)\quad \text{for $x\in [0,1)$}
\end{equation}
forms the ERWVRP with memory parameter
\begin{equation}
  p_r:=
  \begin{dcases*}
    \frac{1}{2} & for even $r$,\\
    \frac{r+1}{2r} & for odd $r$
  \end{dcases*}
\end{equation}
with variable step length (see also Allaart \cite[Lemma 3.2]{Allaart14}). By the periodicity of $d(x)$, we can redefine $f_{r,a}$ on $\onetorus := \mathbb{R}/\mathbb{Z}$ equipped with the Lebesgue measure $\mu$, and we investigate the modulus of continuity of $f_{r,a}$ at a ``typical'' point with respect to the Lebesgue measure.

 In the same way as trigonometric series, we construct an enlarged probability space on which a random variable $U$ uniformly distributed over $\onetorus$ and a standard Brownian motion $\{B(t) \colon t\ge 0\}$ are defined such that
 \begin{equation}
  f_{r,a}(U+h) - f_{r,a} (U) = h\cdot B(V_{m(h)}) + o(h\cdot V_{m(h)}^{\frac{1}{2}-\lambda}) \quad \text{as $h\to 0$, $\nu$-a.s.}\label{eq:TvdW_ASIP}
\end{equation}
The relation \eqref{eq:TvdW_ASIP} enables us to derive several properties of the increments of $f_{r,a}(x)$ at a ``typical'' point with respect to the Lebesgue measure, from limit theorems for Brownian motions (Corollary \ref{thm:TvdW_class_modulus_of_continuity}).

%
%
The rest of this paper is organized as follows: We present our main results in Section \ref{sec:results}. We investigate growth rates of $\{a_k\}_{k\ge 1}$ under \eqref{Assump2:A_n_to_infinite} and \eqref{Assump2:last_term} in Section \ref{sec:conseq_of_assumptions}. Our results for Takagi--van der Waerden class functions are proved in Sections \ref{sec:Proof_AISPforTvdW} and \ref{sec:proof_modulus_of_continuity}. Sections \ref{sec:ProofofASIP_monotone} and \ref{sec:proof_ASIP} are devoted to the proof of the ASIP for ERWVRP with variable step length.

\section{Results}\label{sec:results}

\subsection{An almost sure invariance principle for the Takagi--van der Waerden class functions}
Let $V_n$ be the variance of $w_n(x)$ defined by \eqref{eq:psi_sum}, i.e.,
\begin{equation}
  V_n := \int_{0}^{1} \bigl(w_n (x)\bigr)^2 \mu(dx).
\end{equation}
Note that $V_n = A_n$ for even $r$. For each $|h| \in (0,1)$, put $m(h):= \floor{\log_r(1/|h|)}$.
The next theorem is the ASIP for the functions $f_{r,a}$ defined by \eqref{eq:TvdWClass} under the assumptions \eqref{Assump2:A_n_to_infinite} and \eqref{Assump2:last_term}. Note that these assumptions ensure that $f_{r,a}$ are continuous.
\begin{theorem}\label{thm:ASIP_TvdW}
  Suppose that \eqref{Assump2:A_n_to_infinite} and \eqref{Assump2:last_term} hold. Then, there exists a probability space $(\onetorus \times [0,1), \Borelsigma(\onetorus)\otimes \Borelsigma([0,1)), \nu)$ on which a random variable $U$ uniformly distributed over $\onetorus$ and a standard Brownian motion $\{B(t)\colon t\ge 0\}$ are defined such that
  \begin{equation}\label{eq:f_U_ASIP}
    f_{r,a}(U+h) - f_{r,a}(U) = h\cdot B\bigl(V_{m(h)}\bigr) + o\bigl(h\cdot V_{m(h)}^{\frac{1}{2}-\lambda}\bigr) \quad \text{as $h\to 0$, $\nu$-a.s.,}
  \end{equation}
  for each $\lambda <\delta/32$.
\end{theorem}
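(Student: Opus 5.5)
The proof reduces \eqref{eq:f_U_ASIP} to an ASIP for the ERWVRP $w_n(U)$, and then controls the error coming from the tail of the series. The first ingredient is the ASIP for the ERWVRP with variable step length, proved in Sections \ref{sec:ProofofASIP_monotone}--\ref{sec:proof_ASIP}: under \eqref{Assump2:A_n_to_infinite} and \eqref{Assump2:last_term}, on a suitable enlarged space one has $S_n = B(V_n) + o(V_n^{1/2-\lambda})$ a.s. for each $\lambda<\delta/32$. This is done by the Philipp--Stout blocking scheme, using the exponential mixing of the two-state chain in place of the lacunary gap condition, followed by Skorokhod embedding. We transfer this to the torus using the identification of Nakano--Takei/Allaart: under $\mu$, $\{\psi_k^+(x)\}_{k\ge1}$ has the law of the Markov chain $\{X_k\}$ with parameter $p_r$. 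Since $\{w_n\}$ is a measurable functional of $U$, we take regular conditional distributions of $(\{S_n\},B)$ given $\{S_n\}$. Gluing them (the coupling construction of Section \ref{sec:coupling_with_BM}) yields a measure $\nu$ on $\onetorus\times[0,1)$ such that $U$ is uniform and
\[
w_n(U) = B(V_n) + o\bigl(V_n^{1/2-\lambda}\bigr)\quad \nu\text{-a.s.}
\]
We also note $V_n \asymp A_n$: for odd $r$ the covariances decay geometrically like $(2p_r-1)^{|j-k|}$, so $V_n$ is comparable to $A_n$.

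Next we write the increment at $x=U$ with $m=m(h)$:
\[
f_{r,a}(x+h)-f_{r,a}(x) = \sum_{k\le m} a_k\bigl(\psi_k(x+h)-\psi_k(x)\bigr) + \sum_{k>m} a_k\bigl(\psi_k(x+h)-\psi_k(x)\bigr).
\]
Each $\psi_k$ is piecewise linear with slope $\pm1$ and breakpoints on the grid $r^{-(k-1)}\tfrac12\mathbb{Z}$. For $k\le m$ we have $|h|\le r^{-m}$, so $\psi_k(x+h)-\psi_k(x)=h\,\psi_k^+(x)$ unless $x$ lies within $|h|$ of a breakpoint of level $k$. A Borel--Cantelli argument over scales shows that a.s., for all small $h$, at most the last $O(\log m)$ indices $k$ are affected, and each affected term is bounded by $2|a_k||h|$. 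By Lemma-type growth bounds from Section \ref{sec:conseq_of_assumptions} (from \eqref{Assump2:last_term}, $|a_k|\ll A_k^{(1-\delta)/2}$), this error is $o(|h|A_m^{1/2-\lambda})$. The main sum therefore equals $h\,w_m(U)$ up to admissible error. The tail is bounded by $\sum_{k>m}|a_k|\min(|h|,2r^{-(k-1)})\ll |h|\sum_{j\ge0}|a_{m+1+j}| r^{-j}$. Using the growth constraint again, $|a_{m+j}|\ll A_{m+j}^{(1-\delta)/2}$ and $A_{m+j}\le A_m(1+Cj)^{C}$, this is $O(|h|A_m^{(1-\delta)/2})=o(|h|A_m^{1/2-\lambda})$, since $\lambda<\delta/32<\delta/2$. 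Combining these with the ASIP gives \eqref{eq:f_U_ASIP}, using that $h\mapsto m(h)$ is monotone, so "as $h\to0$" is the same as "as $m\to\infty$".

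The main obstacle is the ASIP for $S_n$ itself, specifically verifying the Philipp--Stout block estimates: the conditional-variance approximation and fourth moment bounds for blocks of the weighted Markov sum. That step gives the exponent $\delta/32$. A secondary delicate point is the boundary-effect estimate near breakpoints, where I would rely on $\mu(\operatorname{dist}(x, r^{-k}\tfrac12\mathbb{Z})<r^{-k-j})\ll r^{-j}$ together with Borel--Cantelli over $j\approx C\log m$.
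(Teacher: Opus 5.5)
Your overall architecture matches the paper's. You split the increment at $m(h)$, bound the tail via the growth of $A_{m+j}$, identify $\{a_k\psi_k^+\}$ with the weighted Markov chain, transfer the ASIP of Theorem~\ref{thm:ASIP_non-monotone} to $\onetorus$ by gluing regular conditional distributions, and use $V_n\asymp A_n$. The tail estimate is fine. The gap is in the boundary term $\sum_{k_0<k\le m}a_k\bigl(\psi_k(x+h)-\psi_k(x)-h\psi_k^+(x)\bigr)$.

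Your Borel--Cantelli over $j\approx C\log m$ does give $m-k_0\le C\log m$ eventually. But bounding each affected term by $2|h||a_k|\ll|h|A_k^{(1-\delta)/2}$ only yields $|h|\,O(\log m)\,A_m^{(1-\delta)/2}$. This is $o(|h|A_m^{1/2-\lambda})$ only if $\log m=o(A_m^{\delta/2-\lambda})$, and \eqref{Assump2:A_n_to_infinite}--\eqref{Assump2:last_term} impose no lower growth rate on $A_m$. For $a_k=k^{-1/2}$ you may take $\delta=1$ and $A_m\sim\log m$, so your bound compares $\log m$ with $(\log m)^{1/2-\lambda}$.

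Using the true window sum $\sum_{m-C\log m<k\le m}|a_k|$ instead does not save the argument in general. Take $a_1=1$ and $a_k=0$ off a sequence of runs placed at very sparse positions $k_i$, with $C\log k_i$ exceeding the run length. Each run has about $A_{k_i}^{\delta/2}$ consecutive indices with $a_k^2=A_{k-1}^{1-\delta}$. Then:
\begin{itemize}
\item \eqref{Assump2:last_term} holds with $K=1$;
\item $A$ changes only by a factor $1+o(1)$ along a run;
\item at the end of each run the window sum is of order $A_m^{1/2}$.
\end{itemize}
A deterministic window of length $\log m$ is too coarse when $A_m$ grows slowly.

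What is missing is a coupling between the random depth $m-k_0$ and the weight mass actually lying in the window. The paper proceeds as follows.
\begin{itemize}
\item Cauchy--Schwarz gives $\sum_{k_0<k\le m}|a_k|\le\bigl((m-k_0)(A_m-A_{k_0})\bigr)^{1/2}$.
\item Monotonicity of $k_0$ in $h$ reduces the problem to $h=r^{-\ell}$.
\item Chebyshev is applied with $E\bigl[(A_\ell-A_{k_0})^2(\ell-k_0)^2\bigr]\le\sum_{j}(A_\ell-A_{\ell-j})^2j^2r^{-(j-1)}$, which comes from the geometric tail of $\ell-k_0$.
\item Since $(A_\ell-A_{\ell-j})^2\le j\sum_{k=\ell-j+1}^{\ell}a_k^4$ and $a_k^4\le K a_k^2A_k^{1-\delta}$, the Borel--Cantelli series is dominated by $\sum_j j^4r^{-j}\cdot\sum_\ell a_\ell^2A_\ell^{-1-\delta/2}$, plus a harmless term.
\item The inner series converges by the Abel--Dini--Pringsheim theorem, no matter how slowly $A_\ell$ grows.
\end{itemize}
This gives $(A_m-A_{k_0})(m-k_0)=o(A_m^{1-\delta/4})$ a.s. Hence the boundary error is $o(|h|A_m^{1/2-\delta/8})$, which is enough since $\lambda<\delta/32$.
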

Let $\localmodulusofcont$ be a non-increasing continuous function satisfying
\begin{equation}
  \localmodulusofcont(r^{-n}) = V_n \quad \text{for each $n\in \mathbb{N}$}.
\end{equation}
Note that $\localmodulusofcont(r^{-n}) = A_n$ for even $r$.
\begin{corollary}\label{thm:TvdW_class_modulus_of_continuity}
  Suppose that \eqref{Assump2:A_n_to_infinite} and \eqref{Assump2:last_term} hold. Then, we have:
  \begin{enumerate}[label={(\roman*)}, ref={\roman*}]
    \item \begin{equation}
    \lim_{h\to 0}\mu\Biggl(\Biggl\{x\in \onetorus \colon \frac{ f_{r,a}(x+h)-f_{r,a}(x) }{  h\sqrt{\localmodulusofcont(|h|)} } \le y \Biggr\} \Biggr)= \int_{-\infty}^{y} \frac{1}{\sqrt{2\pi}} e^{-t^2/2}dt.
  \end{equation}
  \label{cor:modulus_of_continuity_CLT}
  \item For $\mu$-a.e.\ $x$, for each $y\in [-1,1]$ there exists a sequence $\{h_n\}_{n=1}^{\infty}$ with $\displaystyle \lim_{n\to \infty} h_n =0$ such that
    \begin{equation}
      \lim_{n\to\infty} \frac{f_{r,a}(x+h_n) -f_{r,a} (x)}{h_n\sqrt{2\localmodulusofcont(|h_n|) \log\log \localmodulusofcont(|h_n|)}} =y.
    \end{equation}
    \label{cor:modulus_of_continuity_LIL}
  \item For $\mu $-a.e.~$x$,
  \begin{equation}
    \liminf_{h \downarrow 0} \sqrt{\frac{\log\log \localmodulusofcont(h)}{\localmodulusofcont(h)}} \sup_{T \in (h, 1)} \frac{|f_{r,a}(x+T)-f_{r,a}(x)|}{T} = \frac{\pi}{\sqrt{8}}.
  \end{equation}
  \label{cor:modulus_of_continuity_otherLIL}
  \item We also assume that the sequence $\{V_n\}_{n\ge 1}$ is regularly varying with index $\beta>0$, i.e., $V_{\floor{nt}} /V_n \to t^{\beta}$ as $n\to \infty$ for all $t\in (0,\infty)$. Then, as $n\to \infty$, for a standard Brownian motion $\{B(t) \colon 0\le t\le 1\}$,
  \begin{equation}
    \biggl\{ \frac{f_{r,a } (x+r^{-\floor{n t^{1/\beta}}}) - f_{r, a}(x)}{r^{-\floor{n t^{1/\beta}}} \cdot \sqrt{V_n} } \colon 0\le t\le 1 \biggr\} \dlimit \{ B(t) \colon 0\le t\le 1\},
  \end{equation}
  where $\dlimit$ denotes convergence in distribution.
  \label{cor:TvdW_weak_convergence}
  \end{enumerate}
\end{corollary}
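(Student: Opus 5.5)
Each part of Corollary~\ref{thm:TvdW_class_modulus_of_continuity} will be deduced from Theorem~\ref{thm:ASIP_TvdW} together with the corresponding limit theorem for Brownian motion. Since $U$ is uniform on $\onetorus$ under $\nu$, a statement holding $\nu$-a.s.\ for the pair $(U,B)$ and involving only $x=U$ holds for $\mu$-a.e.\ $x$. Likewise, a distributional statement about $f_{r,a}(U+h)-f_{r,a}(U)$ is a statement about $\mu$. The basic reduction is this: by \eqref{eq:f_U_ASIP},
\begin{equation}
  \frac{f_{r,a}(U+h)-f_{r,a}(U)}{h} = B\bigl(V_{m(h)}\bigr) + o\bigl(V_{m(h)}^{\frac12-\lambda}\bigr).
\end{equation}
The error is $o(1)$ after dividing by $\sqrt{V_{m(h)}}$, and a fortiori after dividing by $\sqrt{V_{m(h)}\log\log V_{m(h)}}$. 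We also need $V_{m(h)}$ to be comparable to $\localmodulusofcont(|h|)$. Since $r^{-m(h)-1}<|h|\le r^{-m(h)}$ and $\localmodulusofcont$ is non-increasing, we get $V_{m(h)}\le \localmodulusofcont(|h|)\le V_{m(h)+1}$. I will prove $V_{n+1}/V_n\to 1$, using \eqref{Assump2:last_term} and the explicit form of $V_n$ as a quadratic form in the $a_k$ with geometrically decaying correlations $(2p_r-1)^{|j-k|}$. I expect this to follow from the growth estimates of Section~\ref{sec:conseq_of_assumptions}, giving $V_n\asymp A_n$ and $a_{n+1}^2=o(A_n)$.

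For (\ref{cor:modulus_of_continuity_CLT}): $B(V_{m(h)})/\sqrt{V_{m(h)}}$ is exactly standard normal for every $h$. The a.s.\ error divided by $\sqrt{V_{m(h)}}$ tends to $0$, and the ratio $V_{m(h)}/\localmodulusofcont(|h|)\to1$. Slutsky's lemma then gives the claim. For (\ref{cor:modulus_of_continuity_LIL}): restrict to $h\in\{r^{-n}\}$, or to small perturbations of it, so that the statement becomes one about $B(V_n)/\sqrt{2V_n\log\log V_n}$. Strassen's functional LIL gives that the set of limit points of $B(t)/\sqrt{2t\log\log t}$ as $t\to\infty$ is $[-1,1]$. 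I need this along the discrete times $V_n$, which increase to infinity with $V_{n+1}/V_n\to1$. The standard oscillation estimate for Brownian motion, $\sup_{V_n\le t\le V_{n+1}}|B(t)-B(V_n)|=o(\sqrt{V_n\log\log V_n})$ a.s., transfers the cluster set to the sequence. Since $h<0$ is also allowed, we can choose $h_n=\pm r^{-k_n}$ appropriately, and the cluster set of the ratio along $h_n$ is then $[-1,1]$.

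For (\ref{cor:modulus_of_continuity_otherLIL}): Chung's LIL states that $\liminf_{t\to\infty}\sqrt{\log\log t/t}\,\sup_{s\le t}|B(s)|=\pi/\sqrt8$. With $T\in(h,1)$ we have $V_{m(T)}$ ranging over $\{V_k: k\le m(h)\}$. Hence $\sup_T|f(U+T)-f(U)|/T=\max_{k\le m(h)}|B(V_k)|+o(V_{m(h)}^{1/2-\lambda})$, using that the error is uniform on bounded ranges and small ranges contribute $O(1)$. The discrete maximum over $\{V_k\}$ differs from the continuous supremum over $[0,V_{m(h)}]$ by the same oscillation bound, which is negligible relative to $\sqrt{t/\log\log t}$ once $V_{k+1}-V_k=o(V_k^{1-\delta'})$. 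This follows from \eqref{Assump2:last_term}. This gap estimate is the delicate step: Chung's normalization is smaller than the LIL normalization, so I need the polynomial-rate bound $V_{k+1}-V_k\ll V_k^{1-\delta}$ and the L\'evy modulus $\sqrt{\Delta\log t}$, which is $o(\sqrt{t/\log\log t})$. Finally, replace $V_{m(h)}$ by $\localmodulusofcont(h)$ using the ratio limit.

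For (\ref{cor:TvdW_weak_convergence}): set $n_t=\floor{nt^{1/\beta}}$. Then $f$-increments divided by $r^{-n_t}\sqrt{V_n}$ equal $B(V_{n_t})/\sqrt{V_n}+o(1)$ uniformly in $t\in[\epsilon,1]$, while the range $t<\epsilon$ is controlled by the maximal bound. By scaling, $B(V_{n_t})/\sqrt{V_n}\overset{d}{=}W(V_{n_t}/V_n)$ for a standard Brownian motion $W$. Regular variation, locally uniform by the uniform convergence theorem and monotone, gives $V_{n_t}/V_n\to t$ uniformly on $[0,1]$. Continuity of $W$ then yields convergence in $C[0,1]$, or in $D[0,1]$ since the process is piecewise constant in $t$.
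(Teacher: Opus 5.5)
Your proposal is correct and follows the same route as the paper. Each part is read off from Theorem~\ref{thm:ASIP_TvdW} via the corresponding Brownian limit theorem (exact normality plus Slutsky, the LIL, Chung's LIL, and scaling with the uniform convergence theorem for regular variation), then transferred to $\mu$ through the uniform marginal of $U$; your additional steps fill in details the paper only asserts or glosses over, namely proving $V_{n+1}/V_n\to 1$ to justify $\localmodulusofcont(|h|)\sim V_{m(h)}$ and using the oscillation bound / L\'evy modulus to pass between the discrete times $V_k$ and continuous time in (ii) and (iii).
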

For $r=2$, Corollary \ref{thm:TvdW_class_modulus_of_continuity} \eqref{cor:modulus_of_continuity_CLT} and part of \eqref{cor:modulus_of_continuity_LIL} were proved in \cite{Allaart09flexibleclass} and \cite{Kono87} by assuming the boundedness of $\{a_k\}_{k\ge 1}$.

\subsection{Almost sure invariance principles for elephant random walks remembering the very recent past with variable step length}
In this section the basic probability space is $(\Omega,\mathcal{F},P)$, and the expectation under $P$ is denoted by $E$. We consider the process $\{S_n\colon n\ge 0\}$ defined by \eqref{eq:Def_ERWVRP_with_variable_step_length}.

Nakano and Takei \cite[Theorem 2.2]{NakanoTakeiTakagifunctions} showed that the stochastic process $\{S_n \colon n\ge 0\}$ diverges with probability one if and only if the condition \eqref{Assump2:A_n_to_infinite} is satisfied. We focus on the detailed asymptotic behavior of the ERWVRP with variable step length $\{S_n\colon n\ge 0\}$.
Let
\begin{equation}\label{eq:S_n_variance}
  s_n = E[(S_n)^2]^{1/2}.
\end{equation}
Note that $s_n^2=A_n $ when $p=1/2$.
The next theorem is the ASIP for $\{S_n\colon n\ge 0\}$.
\begin{theorem}\label{thm:ASIP_non-monotone}
  Suppose that \eqref{Assump2:A_n_to_infinite} and \eqref{Assump2:last_term} hold. Then, for any $p\in (0, 1)$,
  \begin{equation}
    s_n^2 \to \infty \quad \text{as $n\to\infty$},
  \end{equation}
  and without changing the distribution of the process $\{S_n \colon n\ge 0\}$, we can redefine the process $\{S_n\colon n\ge 0\}$ on a richer probability space on which a standard Brownian motion $\{B(t)\colon t\ge 0\}$ is defined such that
\begin{equation}\label{eq:ASIP}
  S_n -  B(s_n^2) = o(s_n^{1-\lambda}) \quad \text{as $n\to \infty$, a.s.}
\end{equation}
 for each $\lambda < \delta/16$.
\end{theorem}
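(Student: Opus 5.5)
The plan is to follow the Philipp--Stout blocking scheme for weighted lacunary series, using the Markov structure of $\{X_k\}$ in place of lacunarity. Throughout, write $\rho := 2p-1 \in (-1,1)$. Since the chain is stationary, we have $E[X_k]=0$ and $E[X_jX_k]=\rho^{|j-k|}$.

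\textbf{Step 1: variance.} From the covariance formula,
\begin{equation}
  s_n^2 = A_n + 2\sum_{1\le j<k\le n} a_j a_k \rho^{k-j}.
\end{equation}
Write $a_ja_k \rho^{k-j}$ with $k=j+\ell$ and apply $2|a_ja_{j+\ell}|\le a_j^2+a_{j+\ell}^2$. This gives
\begin{equation}
  \Bigl|\sum_{j<k\le n} a_j a_k\rho^{k-j}\Bigr| \le \sum_{\ell\ge 1}|\rho|^\ell A_n \le \frac{|\rho|}{1-|\rho|}A_n,
\end{equation}
but this is not enough when $\rho<0$, so a lower bound is needed. For that I would use the spectral representation of the covariance $\rho^{|m|}$. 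Its spectral density is
\begin{equation}
  f(\theta)=\frac{1-\rho^2}{|1-\rho e^{i\theta}|^2},
\end{equation}
which satisfies $f(\theta) \ge \frac{1-|\rho|}{1+|\rho|}>0$. Hence
\begin{equation}
  s_n^2=\frac{1}{2\pi}\int_{-\pi}^{\pi}\Bigl|\sum_{k\le n} a_k e^{ik\theta}\Bigr|^2 f(\theta)\,d\theta \ge \frac{1-|\rho|}{1+|\rho|}A_n,
\end{equation}
so $s_n^2\asymp A_n\to\infty$ by \eqref{Assump2:A_n_to_infinite}. The same argument applied to any block sum gives $E[(\sum_{k\in I}a_kX_k)^2]\asymp\sum_{k\in I}a_k^2$, uniformly over intervals $I$.

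\textbf{Step 2: blocking.} Following Philipp--Stout (Chapter 6), partition $\mathbb{N}$ into consecutive long blocks $H_j$ alternating with short gaps $I_j$. The blocks are chosen by the variance: $\sum_{k\in H_j}a_k^2\approx j^{\alpha}$ for a suitable $\alpha$ depending on $\delta$. This is possible because \eqref{Assump2:last_term} says single terms are negligible, $a_n^2=O(A_n^{1-\delta})$. Each gap $I_j$ is given a length of order $j^{\gamma}$ indices, or $C\log j$ suffices, so that $|\rho|^{|I_j|}$ is polynomially small in $j$.

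Let $Y_j=\sum_{k\in H_j}a_kX_k$. The gap contributions are then controlled a.s. by a fourth-moment bound plus Borel--Cantelli. For the fourth moment, $E[(\sum_I a_kX_k)^4]\ll(\sum_I a_k^2)^2$ holds by exponential mixing of the two-state chain, e.g.\ via a martingale decomposition. Under \eqref{Assump2:last_term}, the gap variance is $\ll |I_j|\,A^{1-\delta}$, which is $o(s_n^{2-2\lambda})$ after summation.

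\textbf{Step 3: approximating the blocks.} Two dependence routes are available:
\begin{enumerate*}[label=(\alph*)]
  \item replace $\{Y_j\}$ by independent copies, using the $\psi$-mixing of the chain, which has geometric rate $|\rho|^{\text{gap}}$, and a Berkes--Philipp type coupling;
  \item alternatively, write $Y_j$ as a martingale difference plus a small remainder: $E[Y_j\mid\mathcal{F}_{\text{end of }I_{j-1}}]=\rho^{\text{gap}}(\cdots)$ is exponentially small.
\end{enumerate*}
I would take route (b). Set $Z_j=Y_j-E[Y_j\mid\mathcal{G}_{j-1}]$, where $\mathcal{G}_{j-1}$ is generated by $X_k$ up to the end of $I_{j-1}$. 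Then $\{Z_j\}$ is a martingale difference sequence, and the correction terms are summable a.s.

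Apply the Strassen/Skorokhod embedding (Lemma~\ref{lemma:Skorokhodembeddingtheorem}) to $\{Z_j\}$. This gives $\sum_{j\le N}Z_j=B(T_N)$ with stopping times $T_N$, where $E[\tau_j\mid\cdot]=E[Z_j^2\mid\mathcal{G}_{j-1}]$ and $E\tau_j^2\ll E Z_j^4$.

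\textbf{Step 4, the main obstacle: $T_N - s_{n_N}^2 = O(s_{n_N}^{2-2\lambda'})$ a.s.} I would split $T_N-\sum_j E Z_j^2$ into three pieces:
\begin{enumerate*}[label=(\roman*)]
  \item the martingale part $\sum(\tau_j-E[\tau_j\mid\cdot])$, handled by the fourth-moment bounds and a martingale strong law;
  \item the conditional-variance fluctuation $\sum_j\bigl(E[Z_j^2\mid\mathcal{G}_{j-1}]-EZ_j^2\bigr)$, bounded in $L^2$ by exploiting that $E[Y_j^2\mid X_{\text{end of }I_{j-1}}]$ depends on the past only through one $\pm1$ value, with deviation $O(|\rho|^{\text{gap}}\cdot\text{block variance})$;
  \item the comparison $\sum_jEZ_j^2$ versus $s_n^2$, which differs by the gap and cross-block covariances, using Step~1.
\end{enumerate*}
Choosing $\alpha,\gamma$ optimally should give an exponent of $\delta/16$. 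Finally, interpolate within blocks to pass from $n_N$ to general $n$. The maximal block fluctuation is $\max_{k\in H_N}|S_k-S_{n_{N-1}}|$; it is handled with a maximal fourth-moment inequality and Borel--Cantelli. The Brownian increment $\sup|B(s_n^2)-B(s_{n_N}^2)|$ is handled with standard Brownian oscillation bounds (Csörgő--Révész). Both are $o(s_n^{1-\lambda})$ by the block size choice.
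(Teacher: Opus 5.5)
Your route is correct in outline but genuinely different from the paper's. The paper uses contiguous blocks with no gaps ($B_{j+1}-B_j\sim B_j^{1-\delta/2}$). Without gaps the predictable part of a Doob decomposition is not obviously negligible, so the paper uses a Gordin-type coboundary $Y_j=\xi_j+u_j-u_{j+1}$ relative to $\sigma(Y_1,\dots,Y_j)$. It then has to prove an a.s.\ law of large numbers for $\sum_j Y_j^2$ (second moments via Ibragimov's inequality, Chebyshev along a subsequence, Billingsley's maximal inequality), and pass from there to $\sum_j\xi_j^2$ and to conditional variances by a martingale strong law.

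Your gaps make the plain Doob decomposition suffice. The Markov property gives $E[Z_j^2\mid\text{past}]=E[Z_j^2]+O(|\rho|^{|I_{j-1}|}\sigma_j^2)$ deterministically, so the law of large numbers for $\sum_j Y_j^2$ is not needed at all. Your Step~1 also reproves Lemma~\ref{lemma:momentL2} with the same constant $K(p)$.

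The price is the gap sums. Their variance is only $\ll |I_j|\max_k a_k^2\ll \log j\cdot A^{1-\delta}$, which forces block variances $\asymp A^{1-\beta}$ with $\beta:=1/(\alpha+1)<\delta$ in your notation. You should do the bookkeeping instead of asserting that it ``should give $\delta/16$''. With fourth moments, the martingale strong law for $\sum_j(\tau_j-E[\tau_j\mid\cdot])$ and the bound on within-block maxima need $\lambda<\beta/2$. The gap sums, and their contribution to $s_n^2-\sum_j E[Z_j^2]$, need $\lambda<\delta-\beta$. Taking $\beta=\delta/2$ (the paper's block size) already gives every $\lambda<\delta/4$, so the target exponent is reached with room to spare.

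Two things need repair.

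\textbf{The conditioning $\sigma$-field.} As written, $\mathcal{G}_{j-1}$ is generated up to the end of $I_{j-1}$, which is adjacent to $H_j$. Then $E[Y_j\mid\mathcal{G}_{j-1}]=X_e\sum_{k\in H_j}a_k\rho^{k-e}$ with $k-e\ge 1$, which is of the size of $\max_{H_j}|a_k|$, not exponentially small. Condition instead on $\sigma(X_k\colon k\le \max H_{j-1})$, so that the gap separates the past from $H_j$. This gives $|E[Y_j\mid\cdot]|\le |\rho|^{|I_{j-1}|+1}(1-|\rho|)^{-1}\max_{H_j}|a_k|$, which is summable once $|I_j|=C\log j$ with $C$ large. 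The gap sums then stay outside the martingale, as your Step~2 intends.

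\textbf{The joint construction.} Embedding $\{Z_j\}$ alone does not place the gap sums and intra-block partial sums on the same space as the Brownian motion. You should build the Brownian motion on an extension of the space carrying all of $\{X_k\}$, either with an embedding adapted to the $X$-filtration or via a Berkes--Philipp type gluing lemma. The paper leaves this step implicit as well.
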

By Theorem \ref{thm:ASIP_non-monotone} and limit theorems for the standard Brownian motion, we can deduce corresponding limit theorems for the process $\{S_n \colon n\ge 0\}$ (for the details see e.g., Section 6.1 and Theorems A--E in Section 1 of Philipp and Stout \cite{PhilippStout75ASIP}). We explicitly state the CLT, LIL and the other LIL for  $\{S_n\colon n\ge 0\}$. 
\begin{corollary}\label{cor:CLT_Strassen_Chung}
  Suppose that \eqref{Assump2:A_n_to_infinite} and \eqref{Assump2:last_term} hold. Then, for any $p\in (0, 1)$, we have the following:
  \begin{enumerate}[label={(\roman*)}, ref={\roman*}, align=left, leftmargin=*]
    \descitem{(CLT)} \begin{equation}
    \lim_{n\to\infty} P\biggl(\frac{S_n}{s_n}\le y\biggr) = \int_{-\infty}^{y} \frac{1}{\sqrt{2\pi}} e^{-t^2/2} dt.
  \end{equation}
    \descitem{(the compact LIL)}\label{cor:Strassen_LIL} With probability one, the set of limit points of
  \begin{equation}
    \frac{S_n}{\sqrt{2 s_n^2 \log\log s_n^2}}
  \end{equation}
  coincides with the closed interval $[-1,1]$.
    \descitem{(the other LIL)}\label{cor:Chung_LIL}
    \begin{equation}
    \liminf_{n\to\infty} \sqrt{\frac{\log\log s_n^2}{s_n^2}}\max_{1\le k\le n} |S_k| = \frac{\pi}{\sqrt{8}} \quad \text{$P$-a.s.}
  \end{equation}
  \end{enumerate}
\end{corollary}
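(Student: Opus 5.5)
The plan is to derive Corollary \ref{cor:CLT_Strassen_Chung} directly from Theorem \ref{thm:ASIP_non-monotone}. We transfer the corresponding results for Brownian motion through the coupling. Since each statement concerns only the law of $\{S_n\}$ (the CLT), or an almost sure property of the path that is determined by its distribution (the two LILs), it suffices to prove them for the redefined process on the richer probability space. Fix $\lambda\in(0,\delta/16)$; by Theorem \ref{thm:ASIP_non-monotone} we have $s_n^2\to\infty$ and $S_n=B(s_n^2)+o(s_n^{1-\lambda})$ a.s.

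\textbf{CLT.} $B(s_n^2)/s_n$ is exactly standard normal for every $n$. The error satisfies $o(s_n^{1-\lambda})/s_n\to0$ a.s., so by Slutsky's lemma $S_n/s_n$ converges in distribution to $N(0,1)$.

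\textbf{Compact LIL.} The error is negligible, since $s_n^{1-\lambda}/\sqrt{s_n^2\log\log s_n^2}\to0$. It therefore suffices to show that the limit set of $B(s_n^2)/\sqrt{2s_n^2\log\log s_n^2}$ is $[-1,1]$. Khinchin's LIL for $B(t)/\sqrt{2t\log\log t}$ as $t\to\infty$ gives that the limit points along all $t$ form $[-1,1]$. The subtle point is that we only sample along the sequence $t_n=s_n^2$. Here I would use that the increments are small:
\begin{equation}
s_{n+1}^2-s_n^2\ll a_{n+1}^2+|a_{n+1}|\,s_n\ll A_n^{1-\delta/2},
\end{equation}
using \eqref{Assump2:last_term} and $s_n^2\asymp A_n$. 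The bound $s_n^2\asymp A_n$ and the increment estimate follow from the geometric decay of correlations $E[X_jX_k]=(2p-1)^{|j-k|}$; I expect this to be established in the later sections, and I would take it as given. Then $t_{n+1}/t_n\to1$, and the standard modulus of continuity of Brownian motion (Csörgő--Révész) gives
\begin{equation}
\sup_{t_n\le t\le t_{n+1}}|B(t)-B(t_n)|=O\bigl(\sqrt{(t_{n+1}-t_n)\log t_n}\bigr)=o(\sqrt{t_n}) \quad \text{a.s.}
\end{equation}
Hence the discrete sampling loses no limit points, and the limit set is $[-1,1]$.

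\textbf{Chung's LIL.} By the same modulus argument, $\max_{k\le n}|S_k|$ differs from $\sup_{0\le t\le s_n^2}|B(t)|$ by $o(s_n^{1-\lambda})+o(s_n)$. This error is $o\bigl(\sqrt{s_n^2/\log\log s_n^2}\bigr)$, which is the real constraint here: the interpolation error must be a power of $s_n$ smaller than $\sqrt{s_n^2}$. This holds because $\sqrt{(t_{n+1}-t_n)\log t_n}\ll t_n^{(1-\delta/2)/2}\sqrt{\log t_n}$. Chung's LIL for Brownian motion, evaluated along $t_n\to\infty$ with $t_{n+1}/t_n\to1$, then yields the constant $\pi/\sqrt8$. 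For the liminf along the subsequence, I would use monotonicity of $\sup_{s\le t}|B(s)|$ together with $t_{n+1}/t_n\to1$ to pass from continuous $t$ to $t_n$.

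The main obstacle is exactly this interpolation step: controlling $s_{n+1}^2-s_n^2$ polynomially smaller than $s_n^2$. Both LILs need it, and the Chung case needs it most tightly. Alternatively, one can cite Philipp--Stout's Theorems A--E, which require only $s_n^2\to\infty$ and the ASIP with a power-saving error, and whose interpolation argument is exactly the one above.
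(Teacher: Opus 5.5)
Your proposal is correct and follows the paper's route. The paper deduces the corollary from Theorem \ref{thm:ASIP_non-monotone} together with the corresponding Brownian-motion limit theorems, citing Section 6.1 and Theorems A--E of Philipp and Stout. Your interpolation step is exactly what lies behind those cited results: the bound $|s_{n+1}^2-s_n^2|\ll A_n^{1-\delta/2}$ combined with the Cs\"org\H{o}--R\'ev\'esz modulus gives a genuine power saving rather than merely $o(s_n)$.

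One point you should make explicit is that $s_n^2$ need not be monotone when $p\neq 1/2$. So read ``$t_n\le t\le t_{n+1}$'' as the interval between consecutive values. When comparing $\max_{k\le n}|S_k|$ with $\sup_{t\le s_n^2}|B(t)|$, use $\max_{k\le n}s_k^2=s_n^2+O(A_n^{1-\delta})$. This follows from $|E[S_k(S_n-S_k)]|\le \max_{i\le n}a_i^2\sum_{d\ge1}d|\alpha|^d$.
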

In \cite[Theorem 2.1]{NakanoTakeiTakagifunctions}, the CLT and the classical LIL are obtained for the ERWVRP  (i.e., $a_k\equiv 1$). Note that $A_n = n$ for all $n$ and $\displaystyle s_n^2 \sim \frac{p}{1-p}n$ as $n\to \infty$ in this case. More generally, using the main theorem of \cite{NegishiLILmixingweight}, we have
  \begin{equation}
    \limsup_{n\to \infty} \frac{S_n }{\sqrt{2 \dfrac{p}{1-p} A_n \log\log \biggl(\dfrac{p}{1-p}A_n\biggr)} } = 1 \quad \text{a.s.}\label{eq:exact_LIL_scaled_A_n}
  \end{equation}
  under stronger assumptions on $\{a_k\}_{k\ge 1}$ than \eqref{Assump2:A_n_to_infinite} and \eqref{Assump2:last_term}. We remark that $s_n^2$ in Theorem \ref{thm:ASIP_non-monotone} and Corollary \ref{cor:CLT_Strassen_Chung} cannot be replaced with $\displaystyle \frac{p}{1-p}A_n$ in general. Indeed, noting that $\{(-1)^k X_k\}_{k\ge 1}$ has the same distribution as the increments of the ERWVRP with parameter $1-p$, we have $\displaystyle s_n^2 \sim \frac{1-p}{p}n$ as $n\to\infty$ when $a_k=(-1)^k$.
  It follows from Lemma \ref{lemma:momentL2} below that
  \begin{equation}\label{eq:s_n_square_upperlower}
    \frac{1}{K(p)} A_n \le s_n^2 \le K(p) A_n,
  \end{equation}
  and so we have
  \begin{equation}\label{eq:LIL_scaled_A_n}
    \frac{1}{\sqrt{K(p)}} \le \limsup_{n\to \infty} \frac{S_n }{\sqrt{2A_n \log\log A_n} } \le \sqrt{K(p)} \quad \text{a.s.}
  \end{equation}
  by Corollary \ref{cor:CLT_Strassen_Chung} \eqref{cor:Strassen_LIL}, where
  \begin{equation}
    K(p) := \max \left\{\frac{p}{1-p}, \frac{1-p}{p}\right\}.\label{eq:Kp_definition}
  \end{equation}
  In view of the above discussion, both upper and lower bounds in \eqref{eq:LIL_scaled_A_n} are optimal. Moreover, we can find an example such that limsup in \eqref{eq:LIL_scaled_A_n} is strictly between $\displaystyle \frac{1}{\sqrt{K(p)}}$ and $\sqrt{K(p)}$, see Appendix \ref{appendix:limsup_scaled_A_n}. We give an additional regularity condition to $\{a_k\colon k\ge 1\}$ which enables us to replace $s_n^2$ with $\displaystyle \frac{p}{1-p}A_n$. 
\begin{theorem}\label{thm:monotone}
  Suppose that \eqref{Assump2:A_n_to_infinite} and \eqref{Assump2:last_term} hold. We also assume that
  \begin{assumplist}[resume=assumptions]
    \item $\displaystyle \sum_{k=1}^{\infty} (a_{k+1} - a_k)^2 <+\infty$; or\label{Assump:monotone}
    \item $\{a_k\}_{k\ge 1}$ is a monotone sequence.\label{Assump:monotone_2}
  \end{assumplist}
  Then, for any $p\in (0, 1)$, without changing the distribution of the process $\{S_n \colon n\ge 0\}$, we can redefine the process $\{S_n\colon n\ge 0\}$ on a richer probability space on which a standard Brownian motion $\{B(t)\colon t\ge 0\}$ is defined such that
  \begin{equation}\label{eq:ASIP_monotone}
    S_n - \sqrt{\frac{p}{1-p}} B(A_n) = o(A_n^{ \frac{1}{2} - \frac{\delta}{16} }) \quad \text{a.s.}
  \end{equation}
\end{theorem}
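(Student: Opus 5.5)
The strategy is to reduce Theorem~\ref{thm:monotone} to Theorem~\ref{thm:ASIP_non-monotone} via a martingale-type decomposition of the Markov chain increments. This decomposition isolates the factor $\frac{p}{1-p}$ and leaves a remainder controlled by the regularity assumptions \eqref{Assump:monotone} or \eqref{Assump:monotone_2}. Put $\rho := 2p-1$, so that $E[X_{k+1}\mid X_k]=\rho X_k$. The variables $\xi_k := X_k - \rho X_{k-1}$ (with $\xi_1 := X_1$) form a bounded martingale difference sequence with $E[\xi_k^2 \mid \mathcal{F}_{k-1}] = 1-\rho^2$ for $k\ge 2$; the conditional variance is deterministic. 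Iterating gives $X_k = \sum_{j\le k}\rho^{k-j}\xi_j$, and hence
\begin{equation}
  S_n = \sum_{j=1}^{n} \xi_j \sum_{k=j}^{n} a_k \rho^{k-j}
      = \frac{1}{1-\rho}\sum_{j=1}^{n} a_j \xi_j + R_n ,
\end{equation}
where $R_n$ collects two pieces. The first is the boundary term $-\sum_{j}\xi_j\sum_{k>n}a_k\rho^{k-j}$, which comes from truncating the geometric tail. The second is the difference term $\sum_j \xi_j\sum_{k\ge j}(a_k-a_j)\rho^{k-j}$.

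The main martingale $M_n := \frac{1}{1-\rho}\sum_{j\le n} a_j\xi_j$ has conditional variance
\[
  \frac{1-\rho^2}{(1-\rho)^2}A_n + O(1) = \frac{1+\rho}{1-\rho}A_n + O(1) = \frac{p}{1-p}A_n + O(1).
\]
This sum is deterministic up to $O(1)$, so the Skorokhod embedding (Lemma~\ref{lemma:Skorokhodembeddingtheorem}) together with Strassen's martingale argument gives $M_n = B(\frac{p}{1-p}A_n) + \text{error}$. Rescaling the Brownian motion yields $\sqrt{\frac{p}{1-p}}B(A_n)$. The embedding error is controlled by \eqref{Assump2:last_term} via the standard Strassen/Philipp--Stout estimate on $\sum (\tau_k - E\tau_k)$, i.e.\ higher moments of $a_k^2\xi_k^2$ over $A_n^{1-\delta}$. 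To get exactly the exponent $\frac12-\frac{\delta}{16}$, I would rather reuse the blocking machinery behind Theorem~\ref{thm:ASIP_non-monotone}, now applied to a genuine martingale, where it only becomes simpler.

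The key step is showing $R_n = o(A_n^{1/2-\delta/16})$ a.s., and this is where \eqref{Assump:monotone} or \eqref{Assump:monotone_2} enters.
\begin{itemize}
  \item Boundary term: it is $O(\sum_{k>n}|a_k||\rho|^{k-n})$, which is bounded by $\max_{k\ge n}|a_k|$ weighted geometrically. By \eqref{Assump2:last_term} and the growth lemmas of Section~\ref{sec:conseq_of_assumptions} it is $O(A_n^{(1-\delta)/2})$, which suffices since $(1-\delta)/2 < 1/2-\delta/16$.
  \item Difference term under \eqref{Assump:monotone}: it is a martingale whose increments $c_j := \sum_{k\ge j}(a_k-a_j)\rho^{k-j}$ satisfy $|c_j|\ll \sum_{i\ge j}|\rho|^{i-j}|a_{i+1}-a_i|$. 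Then $\sum c_j^2<\infty$, so this martingale converges a.s. and is $O(1)$.
  \item Difference term under \eqref{Assump:monotone_2}: summation by parts gives $\sum_j c_j^2 \ll \sum_j |a_{j+1}-a_j|\,\max_{i\le n}|a_i|$. Telescoping with monotonicity bounds this by $\max_{i\le n} a_i^2 \ll A_n^{1-\delta}$. The martingale LIL (or a Kolmogorov/Doob maximal bound along a geometric subsequence of $A_n$) then yields $O(A_n^{(1-\delta)/2}\log A_n) = o(A_n^{1/2-\delta/16})$.
\end{itemize}
Note that the monotone case includes sequences of unbounded variation, so the $\ell^2$ argument fails there and the weaker growth bound from \eqref{Assump2:last_term} is really needed.

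The main obstacle is the monotone case \eqref{Assump:monotone_2}, together with keeping the truncation of the tail sums $\sum_{k>n}$ adapted so that the decomposition is exact at every $n$ simultaneously; the latter matters because the boundary term depends on $n$. The fallback is to handle it directly by the pointwise bound above, since that bound is deterministic up to $|\xi|\le 2$.
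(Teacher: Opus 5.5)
Your proposal is correct. Its main term is the paper's: the martingale $\frac{1}{1-\rho}\sum_{k\le n}a_k\xi_k$ (the paper's $\frac{1}{1-\alpha}M_n$ with $\alpha=\rho$, $d_k=\xi_k$) has deterministic conditional variances and is embedded via Skorokhod.

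Where you differ is the remainder. Using $\sum_{j\le i}\rho^{i+1-j}\xi_j=\rho X_i$, your $\sum_{j\le n}c_j\xi_j-X_n\sum_{k>n}a_k\rho^{k-n}$ regroups into exactly the paper's remainder in \eqref{eq:mart_repr}, namely $\frac{\rho}{1-\rho}\sum_{k=1}^{n-1}(a_{k+1}-a_k)X_k-\frac{\rho a_nX_n}{1-\rho}$. The paper simply keeps it written in terms of the $X_k$.

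Under \eqref{Assump:monotone}, the paper cites the a.s.\ convergence of an ERWVRP with square-summable steps (Nakano--Takei). Your martingale form proves this directly: Young's inequality for the geometric convolution gives $\sum_j c_j^2<\infty$, and $L^2$ martingale convergence finishes. So your route is more self-contained there.

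Under \eqref{Assump:monotone_2}, however, the $X$-form makes the case a one-liner. By monotonicity, $\bigl|\sum_{k<n}(a_{k+1}-a_k)X_k\bigr|\le\sum_{k<n}|a_{k+1}-a_k|=|a_n-a_1|$, so the whole remainder is deterministically $O(|a_n|+|a_1|)=O(A_n^{(1-\delta)/2})$. What you call the main obstacle works but is avoidable: bounding $\sum_{j\le n}c_j^2\ll A_n^{1-\delta}$ (absorbing the terms with index beyond $n$ inside $c_j$ via \eqref{eq:A_j_plus_k_le_A_j_exponential}) and then applying a maximal inequality.

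Two smaller points on the embedding step. First, your opening ``reduction to Theorem~\ref{thm:ASIP_non-monotone}'' is not what you actually do. A literal reduction would also lose the exponent, since that theorem only gives $o(s_n^{1-\lambda})=o(A_n^{1/2-\lambda/2})$ with $\lambda<\delta/16$.

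Second, no blocking is needed: the direct estimate you allude to already gives the exponent $\delta/16$ with room to spare, and it is exactly the paper's argument.
\begin{itemize}
\item By \eqref{eq:Skorokhod_tau_n_square_expectation} and \eqref{eq:universal_inequality}, $E[(\tau_k-E[\tau_k\mid\mathcal{G}_{k-1}])^2]\ll a_k^4\le Ka_k^2A_k^{1-\delta}$.
\item Hence $\sum_k a_k^4A_k^{-2+\delta/2}<\infty$ by Theorem~\ref{thm:AbelDiniPringsheim}.
\item Martingale convergence and Kronecker's lemma then give $T_n-\frac{p}{1-p}A_n=o(A_n^{1-\delta/4})$.
\item The Brownian modulus of continuity (Hanson--Russo) turns this into an error $O(A_n^{1/2-\delta/8}(\log A_n)^{1/2})=o(A_n^{1/2-\delta/16})$.
\end{itemize}
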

Theorem \ref{thm:monotone} can be applied to $a_k=k^{\beta}$ with $\beta \ge -1/2$, while the main theorem of \cite{NegishiLILmixingweight} is applicable only for $\beta = 0$ or $\beta \ge 1$.

\subsection{Some related results on the classical LIL for weighted sums}
We mention some results related to the classical LIL for weighted sums of random variables. For independent random variables $\{Z_n\}_{n\ge 1}$ with mean $0$ and finite variance, Kolmogorov \cite{Kolmogorov29} showed that, for $S_n=Z_1+\dots+Z_n$, the LIL
\begin{equation}
  \limsup_{n\to \infty} \frac{S_n}{\sqrt{2s_n^2\log\log s_n^2}} = 1 \quad \text{a.s.}
\end{equation}
holds provided that
\begin{Kolmogorovconditions}
  \item $\displaystyle s_n^2 := \sum_{k=1}^{n} E[(Z_k)^2]\to \infty$ as $n\to \infty$; and\label{item:Kolmogorovcondition1}
  \item there exists a sequence $\{K_n\}_{n\ge 1}$ of positive constants with $K_n\to 0$ such that
  \begin{equation}
    |Z_n| \le K_n \frac{s_n}{\sqrt{\log\log s_n^2}} \quad \text{a.s.}
  \end{equation}
  \label{item:Kolmogorovcondition2}
\end{Kolmogorovconditions}
See also Chapter 5 of Stout \cite{Stout74}. Marcinkiewicz and Zygmund \cite{MarcinkiewiczZygmund37LIL} proved that the conditions \eqref{item:Kolmogorovcondition1} and \eqref{item:Kolmogorovcondition2} are necessary for the LIL to hold by constructing a sequence of random variables of the form $Z_n = a_n \varepsilon_n$ satisfying \eqref{item:Kolmogorovcondition1},
\begin{gather}
  |Z_n| \le K \frac{s_n}{\sqrt{\log\log s_n^2}} \quad\text{a.s.}, \quad \text{and}\\
  \limsup_{n\to \infty} \frac{S_n}{\sqrt{2s_n^2\log\log s_n^2}} < 1 \quad \text{a.s.}
\end{gather}
for some constant $K>0$, where $\{a_n\}_{n\ge 1}$ is a deterministic real sequence and $\{\varepsilon_n\}_{n\ge 1}$ are the Rademacher variables (see also Weiss \cite{Weiss59LIL} for counterexamples). We note that, for $\{a_n \varepsilon_n\}_{n\ge 1}$, the condition \eqref{Assump2:A_n_to_infinite} coincides with \eqref{item:Kolmogorovcondition1}, and the condition \eqref{Assump2:last_term} is much stronger than \eqref{item:Kolmogorovcondition2}. For an i.i.d.~sequence $\{X_n\}_{n\ge 1}$ with mean $0$ and variance $1$, Chow and Teicher \cite{ChowTeicher73LILweighted} proved the LIL for weighted sums $S_n=a_1 X_1+\dots +a_nX_n$ under the assumptions \eqref{Assump2:A_n_to_infinite} and
\begin{ChowTeicher}
  \item $\displaystyle \frac{a_n^2}{A_n} \le \frac{C}{n}$ for some $C>0$.\label{item:ChowTeicher}
\end{ChowTeicher}
Note that the condition \eqref{item:ChowTeicher} implies \eqref{Assump2:last_term} by \cite[Lemma 1]{ChowTeicher73LILweighted}. More precise and general results are found in Berkes, H{\"o}rmann and Weber \cite{BerkesHormannWeber10LIL}. For dependent sequences, Negishi \cite{NegishiLILmixingweight} proved the LIL for weighted sums of a stationary $\phi$-mixing sequence of random variables under weaker moment conditions than those of \cite{ChowTeicher73LILweighted}, but stronger regularity conditions on weights. In view of these previous studies, our assumptions \eqref{Assump2:A_n_to_infinite} and \eqref{Assump2:last_term} are not highly restrictive.

\section{Growth rates of weights under \eqref{Assump2:A_n_to_infinite} and \eqref{Assump2:last_term}}\label{sec:conseq_of_assumptions}
In this section, we present some consequences of the assumptions \eqref{Assump2:A_n_to_infinite} and \eqref{Assump2:last_term}. These conditions include the cases $\{a_k\}_{k\ge 1}$ has polynomially growth, but exclude geometric one.

We first note that \eqref{Assump2:last_term} is equivalent to the following statement:
There exists a positive constant $K>0$ such that
\begin{equation}\label{eq:universal_inequality}
  a_n^2 \le  K A_n^{1-\delta}\quad \text{for all $n\ge 1$.}
\end{equation}
In fact, by \eqref{Assump2:last_term}, there exists a positive integer $N$ and a positive constant $C_1$ such that
\begin{center}
  $a_n^2 \le C_1 A_n^{1-\delta}$\quad for all $n\ge N$.
\end{center} 
Let
\begin{equation}\label{eq:K_universal_constant}
  K=\max \biggl\{C_1, \max_{1\le k\le N} \frac{a_k^2}{A_k^{1-\delta}} \biggr\},
\end{equation}
where $0/0:=0$. Then, we obtain
\begin{equation}
  a_k^2 = \frac{a_k^2}{A_k^{1-\delta}} A_k^{1-\delta} \le K A_k^{1-\delta}\quad \text{for all $1\le k\le N$.}
\end{equation}
Thus, we have $a_n^2\le K A_n^{1-\delta}$ for all $n\ge 1$. The universal constant $K>0$ in \eqref{eq:universal_inequality} is fixed for the rest of this paper. It follows from \eqref{eq:universal_inequality} that
\begin{equation}\label{eq:max_a_k_upperbound}
  \max_{1\le k\le n}a_k^2 \le K \max_{1\le k\le n} A_k^{1-\delta} \le K A_n^{1-\delta}.
\end{equation}

By \eqref{eq:universal_inequality}, $A_{n}-A_{n-1} = a_n^2 \le K A_{n}^{1-\delta}$, and hence
\begin{equation}\label{eq:A_n_polynomially_bound}
  A_n \ll n^{1/\delta}.
\end{equation}
Since $A_{n+1} \le A_n + K A_{n+1}^{1-\delta}$ is equivalent to
\begin{equation}
  A_{n+1} \le A_n(1-KA_{n+1}^{-\delta})^{-1},
\end{equation}
by \eqref{Assump2:last_term}, for fixed $q>1$, there exists a positive integer $n_0 = n_0(q)$ such that $A_{n+1} \le A_n q$ for all $n\ge n_0$, and so
\begin{equation}\label{eq:A_j_plus_k_le_A_j_exponential}
  A_{n+k} \le A_n q^k\quad \text{for $n\ge n_0$ and $k\ge 1$.}
\end{equation}

\begin{remark}\label{remark:A2_imply_conti_condition}
  To see that \eqref{eq:conti_condition} follows from \eqref{Assump2:last_term}, note that \eqref{eq:A_n_polynomially_bound} implies
  \begin{equation}
    \frac{|a_n|}{r^{n-1}} \le \frac{\sqrt{K} A_n^{(1-\delta)/2}}{r^{n-1}} \ll \frac{n^{\frac{1-\delta}{2\delta}}}{r^{n-1}},
  \end{equation}
  and $\displaystyle \sum_{n=1}^{\infty} \frac{n^{\frac{1-\delta}{2\delta}}}{r^{n-1}}$ converges for each $\delta\in (0,1]$.
\end{remark}

\section{Proof of Theorem \ref{thm:ASIP_TvdW}}\label{sec:Proof_AISPforTvdW}
Once we obtain Theorem \ref{thm:ASIP_TvdW} for $h\downarrow 0$, we can prove corresponding results for $h\uparrow 0$ by considering $f_{r,a}(1-x)$. Thus, it is sufficient to show Theorem \ref{thm:ASIP_TvdW} for $h\downarrow 0$. In order to prove Theorem \ref{thm:ASIP_TvdW}, we use Theorem \ref{thm:ASIP_non-monotone} which is proved in Section \ref{sec:proof_ASIP}.
 
For each $h\in (0,1/r)$,
there exists a unique integer $m=m(h)$ such that
\begin{equation}
  \frac{1}{r^{m+1}}<h\le \frac{1}{r^{m}}.
\end{equation}
Note that
\begin{equation}
  m(h) \sim \log_r (1/h) \quad \text{as $h\downarrow 0$},
\end{equation}
where $a(h)\sim b(h)$ as $h\to 0$ means that $\displaystyle \lim_{h\to 0} \frac{a(h)}{b(h)}= 1$. Our aim is to obtain the following approximation of the increment $f_{r,a}(x+h)-f_{r,a}(x)$ by the ERWVRP with memory parameter $p_r$ with variable step length, at time $m(h)$:
\begin{equation}\label{eq:increment_approximation}
  f_{r,a}(x+h) - f_{r,a} (x) = h\cdot w_{m(h)}(x) + o(h\cdot (A_{m(h)})^{\frac{1}{2}-\frac{\delta}{8}}) \quad \text{as $h\downarrow 0$ for $\mu$-a.e. $x$,}
\end{equation}
where $\{w_n (x) \colon n\ge 1\}$ is defined by \eqref{eq:psi_sum}. We first prove \eqref{eq:increment_approximation} for even $r$ in Section \ref{sec:even_case}, and then for odd $r$ in Section \ref{sec:odd_case}. In Section \ref{sec:coupling_with_BM}, we construct a probability measure $\nu$ on the measurable space $(\onetorus\times [0,1), \Borelsigma(\onetorus)\otimes \Borelsigma([0,1)))$ such that the relation \eqref{eq:f_U_ASIP} holds.

\subsection{The case $r$ is even}\label{sec:even_case}
For $x\in [0,1)$ and $h\in (0,1/r)$, we consider $r$-ary expansions of $x$ and $x+h$:
\begin{equation}
  x=\sum_{k=0}^{\infty} \frac{\varepsilon_k}{r^{k}}, \quad \text{and} \quad x+h = \sum_{k=0}^{\infty} \frac{\varepsilon_k'}{r^{k}},
\end{equation}
where $\varepsilon_0=0$, $\varepsilon_0'\in \{0,1\}$, $\varepsilon_k\in \{0,1,\dots,r-1\}$ for each positive integer $k$.
Let
\begin{equation}
  k_0=k_0(x,h):=
  \begin{cases*}
    \max \{k\ge 0 \colon \varepsilon_0=\varepsilon_0',\dots,\varepsilon_k=\varepsilon_k'\} & if $\varepsilon_0=\varepsilon_0'$,\\
    -1 & if $\varepsilon_0\neq\varepsilon_0'$.
  \end{cases*}
\end{equation}
Note that $-1\le k_0 \le m$.
If $k\le k_0(x,h)$, then
\begin{equation}
  \psi_k(x+h)-\psi_k(x)= h\cdot \psi_k^{+}(x),
\end{equation}
and therefore we have
\begin{align}
  f_{r,a}(x+h)-f_{r,a}(x) &= h\cdot \sum_{k=1}^{m(h)} a_k \psi_k^{+}(x)\\
  &\quad + \sum_{k=k_0(x,h)+1}^{m(h)} a_k (\psi_k(x+h)-\psi_k(x)-h\cdot \psi_k^{+}(x))\\
  &\quad + \sum_{k=m(h)+1}^{\infty} a_k (\psi_k(x+h)-\psi_k(x)).\label{eq:TvdWclass_decomposition}
\end{align}
Our aim is to show that both the second and third terms are $o(h(A_{m(h)})^{\frac{1}{2}-\frac{\delta}{8}})$ as $h\downarrow 0$.
By the Lipschitz continuity of $\psi_k$,
\begin{equation}
  \left|\sum_{k=k_0(x,h)+1}^{m(h)} a_k (\psi_k(x+h)-\psi_k(x)-h\cdot \psi_k^{+}(x))\right| \le 2h\sum_{k=k_0(x,h)+1}^{m(h)} |a_k|.\label{eq:estimation_second_term}
\end{equation}
The following lemma shows that \eqref{eq:estimation_second_term} is $o(h\cdot A_{m(h)}^{1/2-\delta/8})$.
\begin{lemma}\label{lemma:increments_second_term}
  Suppose that \eqref{Assump2:A_n_to_infinite} and \eqref{Assump2:last_term} hold. Then, for $\mu$-a.e.~$x$,
  \begin{equation}\label{eq:sum_k_0_to_m_absolute_a_k}
    A_{m(h)}^{-1/2+\delta/8}\sum_{k=k_0(x,h)+1}^{m(h)} |a_k| \to 0\quad \text{as $h\downarrow 0$}.
  \end{equation}
\end{lemma}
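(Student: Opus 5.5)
We first use Cauchy--Schwarz to reduce the claim to a statement about the number of non-shared leading digits of $x$ and $x+h$. Write $m=m(h)$ and $k_0=k_0(x,h)$, and set $L=L(x,h):=m-k_0$, the number of indices in the sum. Then
\begin{equation}
  \sum_{k=k_0+1}^{m}|a_k| \le \sqrt{L}\,\Bigl(\sum_{k=k_0+1}^{m} a_k^2\Bigr)^{1/2} \le \sqrt{L}\,\sqrt{L}\,\max_{1\le k\le m}|a_k| \le L\sqrt{K}\,A_m^{(1-\delta)/2},
\end{equation}
where the last step is \eqref{eq:max_a_k_upperbound}. Multiplying by $A_m^{-1/2+\delta/8}$ leaves the bound $\sqrt{K}\,L\,A_m^{-3\delta/8}$. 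So it suffices to show that, for $\mu$-a.e.\ $x$, we have $L(x,h)=o(A_{m(h)}^{3\delta/8})$ as $h\downarrow 0$.

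To control $L$, we use digits. Since $x+h$ with $r^{-m-1}<h\le r^{-m}$ has to carry through positions $k_0+1,\dots,m$, the $r$-ary digits $\varepsilon_{k_0+1},\dots,\varepsilon_m$ of $x$ must all equal $r-1$, with the possible exception of the last one or two. Concretely, $\varepsilon_k=\varepsilon'_k$ fails at some position $j\le m$ only if the addition of $h$ at scale $r^{-m}$ produces a carry propagating up to position $j$. This forces $\varepsilon_{j+1}=\dots=\varepsilon_{m-1}=r-1$. Hence $L(x,h)\le R_m(x)+2$, where $R_n(x)$ is the length of the run of digits equal to $r-1$ ending at position $n-1$.

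We then bound the run lengths with Borel--Cantelli. Under $\mu$ the digits are i.i.d.\ uniform, so
\begin{equation}
  \mu(R_n\ge \ell)\le r^{-\ell}.
\end{equation}
Taking $\ell_n=\lceil 2\log_r n\rceil$ gives $\sum_n \mu(R_n\ge \ell_n)\le\sum_n n^{-2}<\infty$. By Borel--Cantelli, for $\mu$-a.e.\ $x$ we get $R_n(x)\le 2\log_r n+1$ for all large $n$, and therefore $L(x,h)\ll \log m(h)$.

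The remaining point, and the main obstacle, is to show $\log m = o(A_m^{3\delta/8})$. The only growth information available is the upper bound \eqref{eq:A_n_polynomially_bound} together with $A_m\to\infty$; nothing forces $A_m$ to grow faster than $\log m$, since $a_k$ could be mostly zero. I would therefore replace the crude factor $\sqrt L\cdot\sqrt L\max|a_k|$ by a sharper estimate that keeps the actual sum of squares: $\sum_{k=k_0+1}^m a_k^2 = A_m-A_{k_0}$. If necessary, I would then index the Borel--Cantelli argument by the values of $A$ rather than by $n$. Define $n_j:=\min\{n\colon A_n\ge 2^j\}$; along $[n_j,n_{j+1})$ the quantity $A_m$ is comparable to $2^j$. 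The bad event I would control is that a run of $(r-1)$-digits ending at some $m\in[n_j,n_{j+1})$ has $\sum|a_k|$ exceeding $\varepsilon\,2^{j(1/2-\delta/8)}$.

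By Cauchy--Schwarz and \eqref{eq:universal_inequality}, such a run must cover at least $\ell_j\approx \varepsilon^2 2^{j\delta/4}\cdot 2^{-j(1-\delta)/2}\cdot 2^{j/2}/K$ nonzero-weight indices, that is, at least a power $2^{cj}$ of them. Its probability is then at most $(n_{j+1}-n_j)\,r^{-2^{cj}}$. If $n_{j+1}$ grows too quickly, I would restrict the count to the runs that actually carry weight, which reduces the union bound to indices with $a_k\neq0$. This gives a summable series in $j$, and Borel--Cantelli then completes the proof. Making this union bound uniform over long stretches where the $a_k$ vanish is the delicate step I expect to require care.
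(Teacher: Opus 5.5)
Your proposal has a genuine gap at the point you flagged, and the first route cannot be completed for general weights. After the crude estimate $\sum_{k_0<k\le m}|a_k|\le\sqrt{K}\,L\,A_m^{(1-\delta)/2}$ you need $L=o(A_m^{3\delta/8})$, and this is false in general. If the digits $\varepsilon_{\ell-L+1},\dots,\varepsilon_\ell$ all equal $r-1$, then $L(x,r^{-\ell})\ge L$. By the second Borel--Cantelli lemma applied to disjoint digit windows, this happens with $L\ge\frac12\log_r\ell$ for infinitely many $\ell$, $\mu$-a.s. Now take $a_k^2=1/k$: it satisfies \eqref{Assump2:A_n_to_infinite} and \eqref{Assump2:last_term} with $A_m\sim\log m$, and $(\log m)^{3\delta/8}=o(\log m)$. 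The lemma is still true for this sequence, but only because the run carries very little mass ($A_m-A_{k_0}\approx L/m$). The bound $a_k^2\le KA_k^{1-\delta}$ discards exactly this information.

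Your fallback keeps the same loss. It again converts the size condition into a lower bound on a number of indices via $\max|a_k|$ (incidentally, the exponent there should be $3\delta/8$, not $3\delta/4$), and then uses the union bound $(n_{j+1}-n_j)\,r^{-2^{cj}}$. In the example every $a_k\neq0$, so restricting to indices with nonzero weight changes nothing. Meanwhile $n_{j+1}-n_j\approx e^{2^{j+1}}$ swamps $r^{-2^{3\delta j/8}}$, so the series diverges. As written, the proposal proves the lemma only when $(\log m)^{8/(3\delta)}=o(A_m)$.

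The missing idea is to weight the bad events by the actual mass $A_\ell-A_{\ell-j}$ of the window, not by its length or its number of nonzero terms. The paper does this in four steps:
\begin{enumerate}
\item It keeps $\bigl(\sum_{k_0<k\le m}|a_k|\bigr)^2\le(m-k_0)(A_m-A_{k_0})$.
\item It reduces to $h=r^{-\ell}$, since $k_0(x,h)$ is non-increasing in $h$.
\item It uses the geometric tail of $\ell-k_0$ to get $E[(A_\ell-A_{k_0})^2(\ell-k_0)^2]\le\sum_{j}j^2r^{-(j-1)}(A_\ell-A_{\ell-j})^2$.
\item It applies Chebyshev and sums over all $\ell$. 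Since $(A_\ell-A_{\ell-j})^2\le j\sum a_k^4$ and $a_k^4\le Ka_k^2A_k^{1-\delta}$, summability reduces to $\sum_\ell a_\ell^2/A_\ell^{1+\delta/2}<\infty$, which is Theorem~\ref{thm:AbelDiniPringsheim}.
\end{enumerate}
Windows lying in stretches of zero or small weight then contribute proportionally little. This handles the long low-weight stretches you worried about without any special treatment.

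Your dyadic scheme can be repaired in the same spirit. Fix $J_j=[n_j,n_{j+1})$ and a run length $L$. The number of endpoints $m\in J_j$ with $L(A_m-A_{m-L})\ge\varepsilon^2 2^{j(1-\delta/4)}$ is at most $2L^2 2^{j\delta/4}/\varepsilon^2$, because $\sum_{m\in J_j}(A_m-A_{m-L})\le L\,2^{j+1}$. Use this count in place of $n_{j+1}-n_j$, together with the restriction $L\gtrsim 2^{3\delta j/8}$ that comes from $a_k^2\le KA_k^{1-\delta}$. The resulting series is summable in $j$, and Borel--Cantelli then finishes the argument.
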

\begin{proof}
  The proof is along the same procedure as the argument in \cite[Lemma 5.3]{Allaart09flexibleclass}, however we do not assume the boundedness of $\{a_k\}_{k\ge 1}$.
  By the Cauchy--Schwarz inequality, in order to prove \eqref{eq:sum_k_0_to_m_absolute_a_k}, it is enough to show that 
  \begin{equation}
    \frac{(A_{m(h)}-A_{k_0(x,h)}) (m(h)-k_0(x,h))}{A_{m(h)}^{1-\delta/4}}\to 0 \quad \text{as $h\downarrow 0$}
  \end{equation}
  for $\mu$-a.e.~$x$. Noting that $k_0(x,h)$ is non-increasing in $h$ for fixed $x$, it is sufficient to consider the case $h=r^{-\ell}$ for $\ell \in\mathbb{N}$. By Equation (4.7) of \cite{NakanoTakeiTakagifunctions}, we obtain
  \begin{align}
    E[(A_{\ell} -A_{k_0(x, r^{-\ell})})^2(\ell-k_0(x,r^{-\ell}))^2] &\le \sum_{j=1}^{\ell+1} (A_{\ell}-A_{\ell-j})^2 j^2 r^{-(j-1)}\quad \text{for $\ell \in \mathbb{N}$},
  \end{align}
  and hence it follows from Chebyshev's inequality that, for any fixed $\varepsilon>0$,
  \begin{align}
    &\sum_{\ell=1}^{\infty} \mu\biggl(\biggl\{ x\in \onetorus \colon \frac{ \bigl( A_{\ell} - A_{k_0(x,r^{-\ell})} \bigr) (\ell - k_0(x,r^{-\ell})) }{A_{\ell}^{1-\delta/4} } \ge \varepsilon \biggr\}\biggr)\\
    &\le \sum_{\ell=1}^{\infty} \frac{E[(A_{\ell} -A_{k_0(x, r^{-\ell}) })^2 (\ell  - k_0(x,r^{-\ell}))^2  ]}{\varepsilon^2 A_{\ell}^{2-\delta/2}}\\
    &\le \frac{r}{\varepsilon^2} \sum_{\ell=1}^{\infty} \sum_{j=1}^{\ell +1} \frac{j^2}{r^{j}}\cdot \frac{(A_{\ell} - A_{\ell -j })^2 }{A_{\ell}^{2-\delta/2}}\\
    &=\frac{r}{\varepsilon^2} \sum_{j=1}^{\infty} \frac{j^2}{r^{j}} \sum_{\ell=j}^{\infty} \frac{(A_{\ell} - A_{\ell -j })^2 }{A_{\ell}^{2-\delta/2}} + \frac{r}{\varepsilon^2} \sum_{\ell=1}^{\infty} \frac{(\ell+1)^2}{r^{\ell+1}} A_{\ell}^{-\delta/2}.\label{eq:A_m_minus_A_k_0_complete_convergence} 
  \end{align}
  By the Cauchy--Schwarz inequality, we have
  \begin{equation}
    (A_{\ell} - A_{\ell -j})^2 \le j\sum_{k=\ell -j +1}^{\ell} a_{k}^4 =j \sum_{k=0}^{j-1} a_{\ell -k}^4.
  \end{equation}
  Thus, for $j\ge 1$, by \eqref{eq:universal_inequality},
  \begin{align}
    \sum_{\ell=j}^{\infty} \frac{(A_{\ell} - A_{\ell -j })^2 }{A_{\ell}^{2-\delta/2}} &\le j \sum_{\ell=j}^{\infty} \sum_{k=0}^{j-1} \frac{a_{\ell - k }^4}{A_{\ell}^{2-\delta/2}} = j\sum_{k=0}^{j-1} \sum_{\ell=j}^{\infty} \frac{a_{\ell - k}^4}{A_{\ell}^{2-\delta/2}}\\
    &= j\sum_{k=0}^{j-1} \sum_{\ell = j-k}^{\infty} \frac{a_{\ell}^4}{A_{\ell+k}^{2-\delta/2}}\le j\sum_{k=0}^{j-1} \sum_{\ell=1}^{\infty} \frac{a_{\ell}^4}{A_{\ell}^{2-\delta/2}}\\
    &\le Kj^2 \sum_{\ell =1}^{\infty} \frac{a_{\ell}^{2}}{A_{\ell}^{1+\delta/2}}.
  \end{align}
  By the Abel--Dini--Pringsheim theorem (Theorem \ref{thm:AbelDiniPringsheim} in Appendix \ref{appendix:some_technical_results}), the series $\displaystyle \sum_{\ell =1}^{\infty} \frac{a_{\ell}^{2}}{A_{\ell}^{1+\delta/2}}$ converges, and we see that \eqref{eq:A_m_minus_A_k_0_complete_convergence} converges.
  By the Borel--Cantelli Lemma,
  \begin{equation}
    \lim_{\ell \to \infty} \frac{ \bigl( A_{\ell} - A_{k_0(x,r^{-\ell})} \bigr) (\ell - k_0(x,r^{-\ell})) }{A_{ \ell }^{1-\delta/4} } = 0 \quad \text{for $\mu$-a.e.~$x$}.
  \end{equation}
\end{proof}
For the third term of \eqref{eq:TvdWclass_decomposition}, we have
\begin{equation}
  \left|\sum_{k=m(h)+1}^{\infty} a_k (\psi_k(x+h) -\psi_k(x))\right| \le 2\sum_{k=m(h)+1}^{\infty} \frac{|a_k|}{r^{k-1}}.\label{eq:estimation_third_term}
\end{equation}
We show \eqref{eq:estimation_third_term} is $O(h\cdot A_{m(h)}^{1/2-\delta/2})$.
\begin{lemma}\label{lemma:increments_third_term}
  Suppose that \eqref{Assump2:A_n_to_infinite} and \eqref{Assump2:last_term} hold. Then,
  \begin{equation}\label{eq:increments_third_term}
    \sum_{k=m+1}^{\infty} \frac{|a_k|}{r^{k-1}} = O(h\cdot A_{m(h)}^{1/2-\delta/2})\quad \text{as $h\downarrow 0$}.
  \end{equation}
\end{lemma}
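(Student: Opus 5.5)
We prove the bound using only the pointwise growth bound \eqref{eq:universal_inequality} and the geometric comparison \eqref{eq:A_j_plus_k_le_A_j_exponential}. Write $m=m(h)$, so that $r^{-m-1}<h\le r^{-m}$; in particular $r^{-m}<rh$. By \eqref{eq:universal_inequality}, $|a_k|\le \sqrt{K}A_k^{(1-\delta)/2}$ for all $k$, so
\begin{equation}
  \sum_{k=m+1}^{\infty} \frac{|a_k|}{r^{k-1}} \le \sqrt{K}\sum_{j=1}^{\infty} \frac{A_{m+j}^{(1-\delta)/2}}{r^{m+j-1}} = \sqrt{K}\, r^{-m}\sum_{j=1}^{\infty} \frac{A_{m+j}^{(1-\delta)/2}}{r^{j-1}}.
\end{equation}

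To control $A_{m+j}$ by $A_m$, we use \eqref{eq:A_j_plus_k_le_A_j_exponential} with a fixed $q>1$ chosen so that the resulting series converges. Since $(1-\delta)/2<1/2$, it suffices that $q^{(1-\delta)/2}<r$; for example $q=r$ works because $r^{1/2}<r$, and in fact any $q\in(1,r^2)$ works. Then for $m\ge n_0(q)$ we have $A_{m+j}\le A_m q^j$, hence
\begin{equation}
  \sum_{j=1}^{\infty} \frac{A_{m+j}^{(1-\delta)/2}}{r^{j-1}} \le A_m^{(1-\delta)/2}\, r\sum_{j=1}^{\infty}\Bigl(\frac{q^{(1-\delta)/2}}{r}\Bigr)^{j} = C\, A_m^{(1-\delta)/2},
\end{equation}
where $C<\infty$ depends only on $r,q,\delta$. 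If $\delta=1$ the exponent is $0$, and the sum is trivially a geometric series.

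Combining the two displays with $r^{-m}<rh$ gives
\begin{equation}
  \sum_{k=m+1}^{\infty}\frac{|a_k|}{r^{k-1}}\le \sqrt{K}\,C\,r\, h\, A_{m(h)}^{(1-\delta)/2}
\end{equation}
for all $h$ small enough that $m(h)\ge n_0(q)$, which is guaranteed since $m(h)\to\infty$ as $h\downarrow0$. This is the claimed bound $O(h\cdot A_{m(h)}^{1/2-\delta/2})$.

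The only delicate point is making sure the geometric growth allowed by \eqref{eq:A_j_plus_k_le_A_j_exponential} is slower than the decay $r^{-j}$ after raising to the power $(1-\delta)/2$. This is handled by the freedom in choosing $q$, and the argument needs nothing else.
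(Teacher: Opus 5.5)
Your proposal is correct and follows the paper's proof: you bound $|a_k|$ by \eqref{eq:universal_inequality}, compare $A_{m+k}$ with $A_m$ through \eqref{eq:A_j_plus_k_le_A_j_exponential}, sum the resulting geometric series, and obtain the factor $h$ from $r^{-m}<rh$. Your choice of any $q\in(1,r^2)$, in place of the paper's $q=r^{1/(1-\delta)}$, is slightly cleaner because it also covers the case $\delta=1$.
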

\begin{proof}
  We write $m=m(h)$.
  Since
  \begin{equation}\label{eq:tail_sum}
    \sum_{k=m+1}^{\infty} \frac{|a_k|}{r^{k-1}} =\frac{r^2}{r^{m+1}} \sum_{k=1}^{\infty} \frac{|a_{m+k}|}{r^{k}}\le r^2h\sum_{k=1}^{\infty} \frac{|a_{m+k}|}{r^{k}},
  \end{equation}
  it is sufficient to show $\displaystyle \sum_{k=1}^{\infty} \frac{|a_{m+k}|}{r^{k}}=O(A_{m}^{1/2-\delta/2})$.

  Applying \eqref{eq:A_j_plus_k_le_A_j_exponential} to $q= r^{\frac{1}{1-\delta}} >1$, there exists a positive integer $n_0$ such that
  \begin{equation}
    A_{m+k}^{(1-\delta)/2} \le A_m^{(1-\delta)/2} r^{k/2}
  \end{equation}
  for all $m\ge n_0$ and $k\ge 1$, and hence we have
  \begin{equation}
    \frac{|a_{m+k}|}{r^{k}} \le \frac{K A_{m+k}^{(1-\delta)/2}}{r^{k}} \le KA_m^{(1-\delta)/2} r^{k/2}
  \end{equation}
  by \eqref{eq:universal_inequality}.
  Thus, we obtain
  \begin{equation}
    \sum_{k=1}^{\infty} \frac{|a_{m+k}|}{r^{k}} \le K A_{m}^{(1-\delta)/2}\sum_{k=1}^{\infty} \frac{1}{r^{k/2}}  
  \end{equation}
  for all $m\ge n_0$, which implies \eqref{eq:increments_third_term}.
\end{proof}
Combining \eqref{eq:estimation_second_term}, \eqref{eq:estimation_third_term} and \eqref{eq:TvdWclass_decomposition} yields \eqref{eq:increment_approximation}.

\subsection{The case $r$ is odd}\label{sec:odd_case}
Let
\begin{equation}
  \widehat{k}_0 = \widehat{k}_0(x,h) := k_0\biggl(x+\frac{1}{2},h\biggr),
\end{equation}
and ${k_0}\wedge {\widehat{k}_0} = ({k_0}\wedge {\widehat{k}_0})(x,h) := \min\{{k_0}(x,h),  {\widehat{k}_0}(x,h)\}$. Then we have
\begin{equation}
\label{eq:LinearOddk0Hatk0}
\psi_k(x+h)-\psi_k(x)=\psi_k^+ (x) \cdot h \quad \mbox{if $k \leq (k_0 \wedge \widehat{k}_0)(x,h) $.}
\end{equation}
By the proof of Lemma 4.3 of \cite{NakanoTakeiTakagifunctions},
\begin{equation}
  \mu\bigl(\ell - (k_0\wedge \widehat{k}_0) \ge j \bigr) \le 2r^{-(j-1)}\quad \text{for $j\in\mathbb{N}$}.
\end{equation}
Replacing $k_0(x,h)$ with $\widehat{k}_0 (x,h)$, Lemma \ref{lemma:increments_second_term} and \ref{lemma:increments_third_term} also hold for odd $r$, and so we obtain \eqref{eq:increment_approximation}.

\subsection{Coupling with a standard Brownian motion}\label{sec:coupling_with_BM}
We next construct a probability space $(\onetorus\times [0,1), \Borelsigma(\onetorus)\otimes \Borelsigma([0,1)), \nu)$ such that the relation \eqref{eq:f_U_ASIP} holds. Our construction is inspired by Chapter 6 of Kallenberg \cite{Kallenberg2nd}, an extension of a probability space. Note that we can construct a standard Brownian motion on the Lebesgue probability space $([0,1), \mathcal{B}([0,1)), \mu_0)$ due to Chapter IX of Paley and Wiener \cite{PaleyWiener34AMS}. In order to distinguish the Lebesgue measure on $[0,1)$ from $\mu$ on $\onetorus$, we write the Lebesgue measure on $([0,1), \mathcal{B}([0,1)))$ for $\mu_0$. By Theorem \ref{thm:ASIP_non-monotone}, there exists a sequence $\widetilde{Y} = \{\widetilde{Y}_k\}_{k=1}^{\infty}$ of random variables and a standard Brownian motion $\{W(t)\colon t\ge 0\}$ on $([0,1), \mathcal{B}([0,1)), \mu_0)$ such that $\widetilde{Y}$ has the same distribution as $\Psi = \{a_k \psi_k^{+}\}_{k=1}^{\infty}$, and, for each $\lambda<\delta/32$,
\begin{equation}
  \sum_{k=1}^{n} \widetilde{Y}_k - W(V_n) = o(V_n^{1/2-\lambda}) \quad \text{as $n\to \infty$, $\mu_0$-a.s.,}\label{eq:tilde_Y_ASIP}
\end{equation}
where $\mu_0$ is the Lebesgue measure.

For $t\in [0,\infty)$ and $(x, \omega) \in \onetorus \times [0,1)$, we set
\begin{equation}
  U(x,\omega) := x,\quad Y_k(x, \omega) := \widetilde{Y}_k (\omega), \quad \text{and}\quad B(t)(x, \omega) := W(t)(\omega).
\end{equation}
Let $Y=\{Y_k\}_{k=1}^{\infty}$, and $Q$ be the distribution of $\widetilde{Y}$. Note that $Q$ is also equal to the distribution of $\Psi$. We need to construct a probability measure $\nu$ on $(\onetorus\times [0,1), \Borelsigma(\onetorus)\otimes \Borelsigma([0,1)))$ such that
\begin{enumerate}[label={(\roman*)}, ref={\roman*}]
  \item $\Psi(U) = Y$ $\nu$-a.s.,\label{item:Psi_equal_Y}
  \item its marginal on $\onetorus$ is $\mu$; and\label{item:Psi_first_marginal}
  \item its marginal on $[0,1)$ is $\mu_0$.\label{item:Psi_second_marginal}
\end{enumerate}
Since the countable product $\seqspace$ of the reals is a complete separable metric space, there exists the regular conditional probability measure $\mu_0(\cdot \mid \widetilde{Y}=z)$ on $\Borelsigma([0,1))$ for each $z\in \seqspace$ (see e.g., Kallenberg \cite[Theorem 6.3]{Kallenberg2nd}). The measure $\mu_0(\cdot \mid \widetilde{Y}=z)$ is concentrated on the set $(\widetilde{Y})^{-1} (\{z\})$, i.e., $\mu_0((\widetilde{Y})^{-1} (\{z\}) \mid \widetilde{Y}=z) = 1$ for $Q$-a.e. $z$. For $E\in \Borelsigma(\onetorus)\otimes \Borelsigma([0,1))$, let
\begin{equation}
  \nu (E) := \int_{\onetorus}^{} \mu(dx) \mu_0( E_x \mid \widetilde{Y}=\Psi(x)),
\end{equation}
where $E_x := \{\omega \in [0,1) \colon (x,\omega) \in E\}$. We show that the measure $\nu$ satisfies \eqref{item:Psi_equal_Y}, \eqref{item:Psi_first_marginal} and \eqref{item:Psi_second_marginal}. Then, for any $A\in \Borelsigma(\onetorus)$,
\begin{align}
  \nu (A\times [0,1)) &= \int_{A}^{} \mu(dx) \mu_0([0,1) \mid \widetilde{Y}=\Psi(x) )\\
  &=\int_{A}^{} \mu(dx) =\mu(A).
\end{align}
The regular conditional probability measure $\mu_0(\cdot \mid \widetilde{Y}=z )$ satisfies
\begin{equation}
  \mu_0((\widetilde{Y})^{-1}(A) \cap B ) = \int_{A}^{} Q(dz) \mu_0(B \mid \widetilde{Y}=z)\label{eq:regular_probability_integration}
\end{equation}
for any $A\in \Borelsigma(\seqspace)$ and $B\in \Borelsigma([0,1))$. Thus, since $Q(dz)=(\mu\circ \Psi^{-1})(dz)$, we have, for any $B\in \Borelsigma([0,1))$,
\begin{align}
  \nu(\onetorus \times B) &= \int_{\onetorus}^{} \mu(dx) \mu_0(B\mid \widetilde{Y}=\Psi(x))\\
  &=\int_{\seqspace}^{} Q(dz) \mu_0( B\mid \widetilde{Y}=z)=\mu_0(B).
\end{align}
We show that \eqref{item:Psi_equal_Y} holds. Noting that $\mu_0(\cdot \mid \widetilde{Y}=z)$ is unique up to $Q$-a.e.~$z$ and $Q=\mu\circ \Psi^{-1}$, we have
\begin{equation}
  \mu_0( (\widetilde{Y})^{-1}(\Psi(x))  \mid \widetilde{Y} = \Psi(x)) =1 \quad \text{for $\mu$-a.e.~$x$.}
\end{equation}
Therefore, letting
\begin{equation}
  E= \{(x,\omega) \in \Omega_0 \colon \Psi(U(x,\omega)) = Y(x,\omega)\},
\end{equation}
we have
\begin{align}
  \nu (E) &= \int_{\onetorus}^{} \mu(dx) \mu_0(\{\omega \in [0,1) \colon \widetilde{Y}(\omega)=\Psi(x)\} \mid \widetilde{Y} = \Psi(x))\\
  &=\int_{\onetorus}^{} \mu(dx)\mu_0( (\widetilde{Y})^{-1}(\Psi(x))  \mid \widetilde{Y} = \Psi(x)) = 1,
\end{align}
which means
\begin{equation}
  \Psi(U)=Y \qquad \text{$\nu$-a.s.}
\end{equation}
Hence, by \eqref{eq:increment_approximation} and \eqref{eq:tilde_Y_ASIP}, we have, $\nu$-a.s.,
\begin{align}
  f_{r,a}(U+h) -f_{r,a}(U) &= h\cdot \sum_{k=1}^{m(h)} a_k\psi_k^{+} (U) + o(h \cdot A_{m(h)}^{1/2-\delta/8})\quad \text{as $h\downarrow 0$, and}\\
  \sum_{k=1}^{m(h)} a_k\psi_k^{+} (U) &= B(V_{m(h)}) + o(V_{m(h)}^{1/2-\lambda}) \quad \text{as $h\downarrow 0$},
\end{align}
which implies Theorem \ref{thm:ASIP_TvdW} by using \eqref{eq:s_n_square_upperlower}.

\section{Proof of Corollary \ref{thm:TvdW_class_modulus_of_continuity}}\label{sec:proof_modulus_of_continuity}
Note that $\sigma_l(|h|) \sim V_{m(h)}$ as $h\to 0$.

\eqref{cor:modulus_of_continuity_CLT} By Theorem \ref{thm:ASIP_TvdW}, we have
\begin{equation}
  \frac{f_{r,a}(U+h) -f_{r,a}(U)}{h\cdot \sigma_l(|h|)} \dlimit N \quad \text{as $h\to 0$},
\end{equation}
where $N$ is a standard normal random variable. Since $\nu\circ U^{-1}$ is the Lebesgue measure, we have \eqref{cor:modulus_of_continuity_CLT}.

\eqref{cor:modulus_of_continuity_LIL} By the LIL for Brownian motions, for $\nu$-a.s., for each $y\in [-1,1]$ there exists a sequence $\{h_n\}_{n=1}^{\infty}$ with $\lim_{n\to \infty} h_n =0$ such that
\begin{equation}
  \lim_{n\to \infty } \frac{B(V_{m(h_n)})}{\sqrt{2V_{m(h_n)} \log\log V_{m(h_n)}}} = y,
\end{equation}
which implies \eqref{cor:modulus_of_continuity_LIL}.

\eqref{cor:modulus_of_continuity_otherLIL} By Chung's LIL for Brownian motions, we have
\begin{equation}
  \liminf_{h\to 0} \sqrt{\frac{\log\log V_{m(h)}}{V_{m(h)}}}\sup_{0\le s\le V_{m(h)}} |B(s)| = \frac{\pi}{\sqrt{8}} \quad \text{$\nu$-a.s.}
\end{equation}
Note that, for any $h\in (0,1)$, $\nu$-a.s.,
\begin{align}
  \sup_{T\in (h, 1)}\left|\frac{f_{r,a}(U+T) - f_{r,a}(U)}{T} - B(V_{m(T)})\right| &\le \sup_{T\in (h,1)} V_{m(T)}^{1/2-\lambda}\\
  &\le K(p) A_{m(h)}^{1/2-\lambda}
\end{align}
by \eqref{eq:s_n_square_upperlower}, which implies \eqref{cor:modulus_of_continuity_otherLIL}.

\eqref{cor:TvdW_weak_convergence} Taking $h=r^{-\floor{n t^{1/\beta}}}$ in \eqref{eq:f_U_ASIP}, we have
\begin{equation}
  \frac{f_{r}(U + r^{-\floor{n t^{1/\beta}}}) -f_{r}(U)}{r^{-\floor{n t^{1/\beta}}}}  = B(V_{\floor{nt^{1/\beta}}}) + o(V_{\floor{n t^{1/\beta}}}^{\frac{1}{2}-\lambda}) \quad \text{as $n\to\infty$, $\nu$-a.s.}
\end{equation}
Let $V_0 := 0$. By the uniform convergence theorem for regularly varying functions (see e.g., \cite[Theorem 1.5.2]{Bingham87RegularVariation}),
\begin{equation}
  \delta_n := \sup_{0\le t\le 1} \left|\frac{V_{\floor{nt^{1/\beta}}}}{V_n} -t \right| \to 0 \quad \text{as $n\to \infty$},
\end{equation}
and so
\begin{align}
\frac{1}{\sqrt{V_n}}\sup_{0\le t\le 1} |B(V_n t) - B(V_{\floor{nt^{1/\beta}}}) | &\le \frac{1}{\sqrt{V_n}}\sup_{0\le u, v\le 1; |u-v|\le \delta_n} |B(u) -B(v) |\\
&\to 0 \quad \text{in $\nu$-probability.}
\end{align}
Thus, we obtain
\begin{equation}
  \sup_{0\le t\le 1} \left|\frac{f_{r}(U + r^{-\floor{n t^{1/\beta}}}) -f_{r}(U)}{r^{-\floor{n t^{1/\beta}}}\cdot \sqrt{V_n} } - \frac{B( V_n t )}{\sqrt{V_n}}\right| \to 0 \quad \text{in $\nu$-probability,}
\end{equation}
which proves \eqref{cor:TvdW_weak_convergence}.

\section{Proof of Theorem \ref{thm:monotone}}\label{sec:ProofofASIP_monotone}
Our proof of Theorem \ref{thm:monotone} is much easier than that of Theorem \ref{thm:ASIP_non-monotone}, and so we give it first. The method is inspired by \cite[Theorem 2.1]{Shao93ASIP}.

We start by presenting some basic properties of the stochastic process $\{S_n \colon n\ge 0\}$ defined by \eqref{eq:Def_ERWVRP_with_variable_step_length}.
Let $\mathcal{F}_0$ be the trivial $\sigma$-field, and for $n \geq 1$, let $\mathcal{F}_n = \sigma\{X_1, X_2, \ldots, X_n\}$ be the $\sigma$-field generated by $X_1, X_2, \ldots, X_n$. Then, we have
\begin{equation}\label{eq:step_conditional_exp}
  E[X_{n+1} \mid \mathcal{F}_n] = \alpha X_n,
\end{equation}
and
\begin{equation}
  E[X_k X_l] = \alpha^{|l-k|}
\end{equation}
for any $k,l\ge 1$, where $\alpha=2p-1$.
\begin{lemma}[{Nakano and Takei \cite[Section 3]{NakanoTakeiTakagifunctions}}]
  The sequence $\{X_k\colon k\ge 1\}$ is a $\phi$-mixing sequence with
  \begin{equation}\label{eq:phi_definition}
    \phi(m) : = \frac{|\alpha|^m}{2}.
\end{equation}
\end{lemma}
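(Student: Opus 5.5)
The claim is that the stationary Markov chain $\{X_k\}$ is $\phi$-mixing with $\phi(m)=|\alpha|^m/2$. The plan is to compute the $m$-step transition probabilities explicitly and then use the Markov property to reduce the mixing coefficient to a one-step-state comparison. Here
\[
\phi(m)=\sup_{n}\sup\bigl\{|P(B\mid A)-P(B)| : A\in\mathcal{F}_n,\ P(A)>0,\ B\in\sigma(X_{n+m},X_{n+m+1},\dots)\bigr\}.
\]

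The first step is to diagonalize the transition matrix
\[
\begin{pmatrix} p & 1-p\\ 1-p & p\end{pmatrix},
\]
whose eigenvalues are $1$ and $\alpha=2p-1$, with eigenvectors $(1,1)$ and $(1,-1)$. Since the uniform law is stationary, this gives
\[
P(X_{n+m}=\epsilon\mid X_n=\eta)=\frac{1+\alpha^m\epsilon\eta}{2}\qquad(\epsilon,\eta\in\{\pm1\}).
\]
Equivalently, this follows by induction from $E[X_{n+1}\mid\mathcal{F}_n]=\alpha X_n$ in \eqref{eq:step_conditional_exp}, which gives $E[X_{n+m}\mid \mathcal{F}_n]=\alpha^m X_n$ for $\pm1$-valued variables.

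The second step is the reduction via the Markov property. For $B$ in the future $\sigma$-field, write $P(B\mid X_{n+m},\mathcal{F}_n)=g(X_{n+m})$, where $g(\epsilon)=P(B\mid X_{n+m}=\epsilon)$ does not depend on $n$ beyond the shift, by time-homogeneity. Then for $A\in\mathcal{F}_n$,
\[
P(B\mid A)=\sum_\epsilon g(\epsilon)\,P(X_{n+m}=\epsilon\mid A),
\]
and $P(X_{n+m}=\epsilon\mid A)=\sum_\eta P(X_n=\eta\mid A)\,\frac{1+\alpha^m\epsilon\eta}{2}$. Set $c=\sum_\eta \eta\,P(X_n=\eta\mid A)\in[-1,1]$. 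Then $P(X_{n+m}=\epsilon\mid A)=\frac{1+\alpha^m\epsilon c}{2}$, while $P(X_{n+m}=\epsilon)=\frac12$ by stationarity. Hence
\[
P(B\mid A)-P(B)=\frac{\alpha^m c}{2}\bigl(g(+1)-g(-1)\bigr).
\]

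The final estimate is that $|g(+1)-g(-1)|\le 1$ because $g$ takes values in $[0,1]$, and $|c|\le1$. This yields the bound $|P(B\mid A)-P(B)|\le |\alpha|^m/2$ uniformly in $n$, $A$, and $B$. Equality is attained by taking $A=\{X_n=1\}$ and $B=\{X_{n+m}=1\}$, so the coefficient is exactly $|\alpha|^m/2$, as claimed. The only delicate point is the justification of the Markov step, namely that conditioning on $\mathcal{F}_n$ together with $X_{n+m}$ reduces to conditioning on $X_{n+m}$. This is standard for future events: verify it first on cylinder sets and extend by a $\pi$-$\lambda$ argument.
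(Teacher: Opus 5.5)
Your proof is correct and complete. The Markov property reduces $P(B\mid A)$ to the conditional law of $X_{n+m}$ given $A$, and the explicit kernel $P(X_{n+m}=\epsilon\mid\mathcal{F}_n)=\tfrac12(1+\epsilon\alpha^m X_n)$ gives that law. The identity $P(B\mid A)-P(B)=\tfrac{\alpha^m c}{2}\bigl(g(+1)-g(-1)\bigr)$ then exploits the cancellation that yields the sharp constant $\tfrac12$; the cruder bound $\max_\eta\sum_\epsilon\bigl|P(X_{n+m}=\epsilon\mid X_n=\eta)-\tfrac12\bigr|$ would only give $|\alpha|^m$. Your equality case also shows this constant is attained.

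The paper gives no proof of this lemma and only cites Nakano--Takei, so there is no argument in the text to compare against. Add the remark that $|\alpha|=|2p-1|<1$ for $p\in(0,1)$, so $\phi(m)\to 0$; this is what makes the sequence $\phi$-mixing rather than merely having this coefficient.
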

We frequently use the following moment inequality for $S_n$.
\begin{lemma}[{Nakano and Takei \cite[Lemma 3.1]{NakanoTakeiTakagifunctions}}]\label{lemma:momentL2}
  For any $\{a_k\}_{k\ge 1}$ and any integers $n>m\ge 1$, we have
  \begin{equation}
    \frac{1}{K(p)}\cdot (A_n-A_m) \le E\bigl[(S_m-S_n)^2\bigr] \le K(p)\cdot (A_n-A_m).
  \end{equation}
  Here $K(p)$ is the constant defined by \eqref{eq:Kp_definition}.
\end{lemma}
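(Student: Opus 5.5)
We need to prove Lemma \ref{lemma:momentL2}: for $n>m\ge 1$,
\[
\frac{1}{K(p)}(A_n-A_m)\le E[(S_n-S_m)^2]\le K(p)(A_n-A_m).
\]

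The plan is to write the second moment as a quadratic form in the weights and compare it with $\sum a_k^2$ by spectral bounds. Since $S_n-S_m=\sum_{k=m+1}^n a_kX_k$ and $E[X_kX_l]=\alpha^{|k-l|}$ with $\alpha=2p-1$, we have
\[
E[(S_n-S_m)^2]=\sum_{k,l=m+1}^{n} a_k a_l \alpha^{|k-l|} = \langle T b, b\rangle ,
\]
where $b=(a_{m+1},\dots,a_n)$ and $T$ is the $(n-m)\times(n-m)$ Kac--Murdock--Szeg\H{o} Toeplitz matrix with entries $\alpha^{|k-l|}$.

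It therefore suffices to show that every eigenvalue of $T$ lies in $[1/K(p),K(p)]$. The tool is that the symbol of the infinite Toeplitz matrix is the Poisson kernel
\[
\sum_{j\in\mathbb{Z}}\alpha^{|j|}e^{ij\theta}=\frac{1-\alpha^2}{1-2\alpha\cos\theta+\alpha^2}.
\]
For any finite vector $b$, padded with zeros, we get
\[
\langle Tb,b\rangle=\frac{1}{2\pi}\int_{0}^{2\pi}\frac{1-\alpha^2}{|1-\alpha e^{i\theta}|^2}\Bigl|\sum_k b_k e^{ik\theta}\Bigr|^2\,d\theta .
\]
Parseval then bounds the quadratic form between the minimum and maximum of the symbol times $\sum b_k^2=A_n-A_m$. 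Note that $\alpha=2p-1\in(-1,1)$ because $p\in(0,1)$, so the series converges absolutely and the manipulation is justified.

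The remaining step is to compute the extremes of the symbol. On $|1-\alpha e^{i\theta}|\in[1-|\alpha|,1+|\alpha|]$ it ranges over
\[
\left[\frac{1-|\alpha|}{1+|\alpha|},\,\frac{1+|\alpha|}{1-|\alpha|}\right].
\]
With $\alpha=2p-1$ we have $1+\alpha=2p$ and $1-\alpha=2(1-p)$, so
\[
\frac{1+|\alpha|}{1-|\alpha|}=\max\left\{\frac{p}{1-p},\frac{1-p}{p}\right\}=K(p),
\]
and the lower extreme is $1/K(p)$. Combining this with the integral representation gives both inequalities.

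The only delicate point is justifying the symbol identity and the interchange of sum and integral, which is routine because the geometric series converges absolutely. If one prefers to avoid Fourier analysis, an alternative is to write $X_k=\alpha X_{k-1}+\eta_k$ with martingale differences $\eta_k$ satisfying $E[\eta_k^2]=1-\alpha^2$. One then expands $S_n-S_m$ in the orthogonal system $\{X_{m+1},\eta_{m+2},\dots,\eta_n\}$ and bounds the resulting coefficient norms via the same $(1\pm|\alpha|)$ factors. The Toeplitz route is the cleanest, so I would carry out that one.
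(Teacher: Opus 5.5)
Your proof is correct and complete. The paper gives no proof of this lemma: it imports it from \cite[Lemma 3.1]{NakanoTakeiTakagifunctions}. The only in-paper input is the covariance identity $E[X_kX_l]=\alpha^{|k-l|}$, which is also your starting point.

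Your spectral argument is a clean, self-contained replacement for the citation. The Poisson kernel has Fourier coefficients exactly $\alpha^{|j|}$, so the quadratic form is an average of $\bigl|\sum_k b_ke^{ik\theta}\bigr|^2$ against a density whose range is $[\frac{1-|\alpha|}{1+|\alpha|},\frac{1+|\alpha|}{1-|\alpha|}]=[1/K(p),K(p)]$. This gives both bounds with the sharp constants.

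It is worth seeing why a positivity argument of this kind is needed. The elementary estimate $2|a_ka_l|\le a_k^2+a_l^2$ does reproduce the upper bound $\frac{1+|\alpha|}{1-|\alpha|}(A_n-A_m)$. On the lower side, however, it only gives $\frac{1-3|\alpha|}{1-|\alpha|}(A_n-A_m)$, which is useless once $|\alpha|\ge 1/3$.

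Your fallback via the orthogonal system $X_{m+1},\eta_{m+2},\dots,\eta_n$ also goes through. Put $c_j=\sum_{k=j}^n a_k\alpha^{k-j}$; then $E[(S_n-S_m)^2]=c_{m+1}^2+(1-\alpha^2)\sum_{j\ge m+2}c_j^2$. The lower bound follows from $a_j=c_j-\alpha c_{j+1}$ together with $(x-\alpha y)^2\le(1+|\alpha|)(x^2+|\alpha|y^2)$. The upper bound follows from Cauchy--Schwarz applied to $c_j$. This route is more elementary but less transparent than the Fourier computation.

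One small bonus of your approach: it applies verbatim with $m=0$ (so $S_m=0$). That is the case the paper actually uses to get $A_n/K(p)\le s_n^2\le K(p)A_n$, even though the lemma as stated requires $m\ge 1$.
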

We use the Doob decomposition to prove Theorem \ref{thm:monotone}. By \eqref{eq:step_conditional_exp}, $d_k:= X_k - \alpha X_{k-1}$ forms a martingale difference sequence, where $\alpha=2p-1$. Letting $M_n := \sum_{k=1}^{n} a_k d_k$, we find that $\{M_n\}_{n=1}^{\infty}$ is a zero-mean and square-integrable martingale, and we have
\begin{equation}\label{eq:mart_repr}
  S_n - \frac{1}{1-\alpha} M_n = \frac{\alpha}{1-\alpha} \sum_{k=1}^{n-1} (a_{k+1} - a_k ) X_k -\frac{\alpha a_n X_n}{1-\alpha}.
\end{equation}

  Applying the Skorokhod embedding theorem (Lemma \ref{lemma:Skorokhodembeddingtheorem} in Appendix \ref{appendix:some_technical_results}) to $\{M_n\}_{n=1}^{\infty}$, there exists a standard Brownian motion $\{B'(t) \colon t\ge 0\}$ and a non-decreasing sequence $\{T_n\}_{n=1}^{\infty}$ of random variables such that \eqref{eq:embedding}, \eqref{eq:Skorokhod_tau_n_expectation} and \eqref{eq:Skorokhod_tau_n_square_expectation} hold.
  Since $|d_n|\le 2$, we have
  \begin{align}
    \sum_{k=1}^{\infty} \frac{E[ (\tau_k - E[\tau_k \mid \mathcal{F}_{k-1}])^2 \mid \mathcal{F}_{k-1} ]}{\bigl(A_k^{1-\frac{\delta}{4}}\bigr)^2} &\le 16 C_1 \sum_{k=1}^{\infty} \frac{a_k^4}{A_k^{2-\frac{\delta}{2}}}\\
     &\le 16 C_1 K \sum_{k=1}^{\infty} \frac{a_k^2}{A_k^{1+\frac{\delta}{2}}} <+\infty \quad \text{a.s.}
  \end{align}
  by \eqref{eq:Skorokhod_tau_n_square_expectation}, \eqref{eq:universal_inequality} and Theorem \ref{thm:AbelDiniPringsheim}, and so it follows from \eqref{eq:Skorokhod_tau_n_expectation}, Doob's convergence theorem (Corollary 2.2 in \cite{HallHeydemart}), and Kronecker's lemma that
  \begin{equation}\label{eq:tau_LLN}
    T_n - (1-\alpha^2) A_n = \sum_{k=1}^{n} (\tau_k - E[\tau_k \mid \mathcal{F}_{k-1}]) = o(A_n^{1-\frac{\delta}{4}})\quad \text{a.s.}
  \end{equation}
  Letting $\displaystyle B(t):= \frac{1}{\sqrt{1-\alpha^2}} B'((1-\alpha^2) t)$, $\{B(t)\colon t\ge 0\}$ is also a standard Brownian motion, and we have, a.s.,
  \begin{equation}
    M_n = B' (T_n) = B'((1-\alpha^2) A_n) + o(A_n^{\frac{1}{2}-\frac{\delta}{16}}) = \sqrt{1-\alpha^2} B(A_n) + o(A_n^{\frac{1}{2}-\frac{\delta}{16}})
  \end{equation}
  by \eqref{eq:tau_LLN} and Theorem 3.2.A in \cite{HansonRusso83}. We show the right-hand side of \eqref{eq:mart_repr} is $o(A_n^{\frac{1}{2}-\frac{\delta}{16}})$. By the assumption \eqref{Assump:monotone} and \cite[Theorem 2.2]{NakanoTakeiTakagifunctions}, the first term on the right-hand side of \eqref{eq:mart_repr} converges a.s., and hence it is $o(A_n^{\frac{1}{2}-\frac{\delta}{16}})$. Since
  \begin{equation}\label{eq:a_n_negligible_compared_to_A_n_one-half}
    \frac{|a_n|}{A_n^{\frac{1}{2}-\frac{\delta}{16}}} \le \frac{\sqrt{K}}{A_n^{\frac{7\delta}{16}}}  \to 0
  \end{equation}
  by \eqref{eq:universal_inequality} and \eqref{Assump2:A_n_to_infinite}, the second term is $o(A_n^{\frac{1}{2}-\frac{\delta}{16}})$. If \eqref{Assump:monotone_2} holds, the first and second terms are also $o(A_n^{\frac{1}{2}-\frac{\delta}{16}})$ by \eqref{eq:a_n_negligible_compared_to_A_n_one-half}.
  By \eqref{eq:mart_repr}, we see that the relation \eqref{eq:ASIP_monotone} holds. This completes the proof of Theorem \ref{thm:monotone}.

\section{Proof of Theorem \ref{thm:ASIP_non-monotone}}\label{sec:proof_ASIP}
This section is devoted to the proof of Theorem \ref{thm:ASIP_non-monotone}. If $p=1/2$, then the process $\{S_n\colon n\ge 0\}$ is a zero-mean and square-integrable martingale, and Theorem \ref{thm:ASIP_non-monotone} is a direct consequence of Theorem 1.3 of Strassen \cite{Strassen67}. In the rest of this section, we always assume that $p\neq 1/2$.

\subsection{Proof outline}\label{sec:proof_outline}
 
 We begin with a brief outline of the proof of Theorem \ref{thm:ASIP_non-monotone}. Our proof is inspired by Chapter 6 of Philipp and Stout \cite{PhilippStout75ASIP} and that of Theorem 4 in Berkes and Philipp \cite{BerkesPhilipp79}. It is a combination of a blocking procedure, a Gordin type martingale approximation, and the Skorokhod embedding theorem.

 First, we partition the index set into disjoint blocks $I_j$ and define the block sums $Y_j:=\sum_{k\in I_j} a_k X_k$. Although the variance of $S_n$ is not easy to analyze directly, a suitable choice of blocks gives good control of the variance $\sigma_j^2$ of $Y_j$, and we can obtain a law of large numbers for $\{Y_j^2\}$.

By the Doob decomposition, we can decompose $\sum_{j}^{} Y_j$ as the sum of the martingale difference $ \sum_{j}^{} (Y_j-E[Y_j\mid Y_1,\dots,Y_{j-1}])$ and the predictable term $\sum_{j}^{} E[Y_j\mid Y_1,\dots,Y_{j-1}]$, but in the proof of Theorem \ref{thm:monotone} above, it is not obvious whether the predictable term is negligible under \eqref{Assump2:A_n_to_infinite} and \eqref{Assump2:last_term} only.
Therefore, we use a Gordin type decomposition, namely
\begin{equation}\label{eq:Gordin_decomposition}
  Y_j = \xi_j +u_j -u_{j+1},
\end{equation}
where $\{\xi_j\}$ is a martingale difference sequence, and
\begin{equation}
  u_j := \sum_{\ell=0}^{\infty} E[Y_{j+\ell} \mid Y_1,\dots,Y_{j-1}].\label{eq:u_j_definition}
\end{equation}
Since
\begin{equation}
  \sum_{j=1}^{M} Y_j = \sum_{j=1}^{M} \xi_j + u_1 - u_{M+1},
\end{equation}
it is enough to show $u_j$ is negligible compared to the suitable scaling for the martingale.
For general results on Gordin type martingale approximations, see Theorem 4.1 of Merlev{\`e}de, Peligrad and Utev \cite{MerlevedePeligradUtev19DependentStructures}.

In the final step, applying the Skorokhod embedding theorem (Lemma \ref{lemma:Skorokhodembeddingtheorem}) to $\Delta M_n = \xi_n$, there exists a standard Brownian motion $\{B(t) \colon t\ge 0\}$ and a non-decreasing sequence $\{T_n\}_{n=1}^{\infty}$ of random variables satisfying \eqref{eq:embedding}, \eqref{eq:Skorokhod_tau_n_expectation}, and \eqref{eq:Skorokhod_tau_n_square_expectation}.
The error term in an approximation of $S_n$ by $B(s_n^2)$ is evaluated via the law of large numbers for $\{\tau_n\}_{n=1}^{\infty}$, where $\tau_n = T_n-T_{n-1}$.

\subsection{Introduction of the blocks}
Let $h_1=0$ and $A_0=0$. We define $h_{n+1}$ inductively as the smallest integer $h$ with
\begin{equation}\label{eq:definition_h_n}
  A_{h} - A_{h_n} \ge A_{h_n}^{1-\delta/2}.
\end{equation}
To avoid sub-subscript, we write $B_n=A_{h_n}$.

By the minimality of $h_{n+1}$ and \eqref{eq:universal_inequality},
\begin{equation}\label{eq:B_n_diff_upper_BD_1}
  B_n^{1-\delta/2} \le B_{n+1} -B_{n} = A_{h_{n+1}-1} - B_n + a_{h_{n+1}}^2\le B_n^{1-\delta/2} + KB_{n+1}^{1-\delta}.
\end{equation}
Using \eqref{Assump2:A_n_to_infinite},
\begin{equation}
  0 \le \frac{B_n^{1-\delta/2}}{B_{n+1}} = \frac{B_n}{B_{n+1}} \cdot B_{n}^{-\delta/2} \to 0 \quad \text{as $n\to\infty$},
\end{equation}
which together with \eqref{eq:B_n_diff_upper_BD_1} implies that
\begin{equation}\label{eq:B_nPlus1_sim_B_n}
  B_{n+1} -B_n \sim B_{n}^{1-\delta/2} \quad \text{and} \quad B_{n+1}\sim B_{n} \quad \text{as $n\to\infty$}.
\end{equation}
Here $b_n\sim c_n$ as $n\to \infty$ means that $\lim_{n\to \infty} b_n/c_n =1$.

For $j\in \mathbb{N}$, let
\begin{equation}
  I_j := (h_j, h_{j+1}]\cap \mathbb{Z},\quad Y_j := \sum_{k\in I_j} a_k X_k,\quad \text{and} \quad \sigma_j:=E[(Y_j)^2]^{1/2}.
\end{equation}
Recall that $s_n^2$ is defined by \eqref{eq:S_n_variance}. We will show that $s_{h_{j+1}}^2 - s_{h_{j}}^2$ is approximated by $\sigma_j^2$.
\begin{lemma}\label{lemma:s_h_j_asymp_sigma_j_B_j}
  As $j\to \infty$, $s_{h_j}^2 \asymp \sigma_j^2 B_j^{\delta/2}$, where $b_n\asymp c_n$ as $n\to \infty$ means that $b_n=O(c_n)$ and $c_n=O(b_n)$ as $n\to \infty$.
\end{lemma}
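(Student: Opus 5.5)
By Lemma~\ref{lemma:momentL2}, I would reduce both sides of the statement to explicit quantities in the sequence $\{B_n\}$, so that $s_{h_j}^2 \asymp B_j$ and $\sigma_j^2 \asymp B_j^{1-\delta/2}$. The claim then follows by comparing these two estimates.

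\emph{First quantity.} By definition $s_{h_j}^2 = E[S_{h_j}^2]$. Applying Lemma~\ref{lemma:momentL2} with $m=0$ (that is, to the sum from $1$ to $h_j$, with $S_0=0$), or directly from \eqref{eq:s_n_square_upperlower}, gives
\[
  \frac{1}{K(p)} A_{h_j} \le s_{h_j}^2 \le K(p) A_{h_j}.
\]
Since $A_{h_j}=B_j$, this is $s_{h_j}^2 \asymp B_j$. One small point needs care: Lemma~\ref{lemma:momentL2} is stated for $n>m\ge 1$. For $m=0$ I would note that the same computation applies to the full sum, which is exactly what \eqref{eq:s_n_square_upperlower} records.

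\emph{Second quantity.} We have $Y_j = S_{h_{j+1}} - S_{h_j}$, so Lemma~\ref{lemma:momentL2} with $m=h_j$, $n=h_{j+1}$ gives
\[
  \frac{1}{K(p)}(B_{j+1}-B_j) \le \sigma_j^2 \le K(p)(B_{j+1}-B_j).
\]
For $j=1$ we have $h_1=0$, but this finite initial range is irrelevant to an asymptotic statement. By \eqref{eq:B_nPlus1_sim_B_n}, $B_{j+1}-B_j \sim B_j^{1-\delta/2}$, and hence $\sigma_j^2 \asymp B_j^{1-\delta/2}$.

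\emph{Combination.} Multiplying the second estimate by $B_j^{\delta/2}$ gives $\sigma_j^2 B_j^{\delta/2} \asymp B_j \asymp s_{h_j}^2$, which is the claim. The constants can be taken explicitly as roughly $K(p)$ times a factor $1+\varepsilon$ coming from the asymptotic equivalence, valid for large $j$.

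The only real content lies in the block-length asymptotics \eqref{eq:B_nPlus1_sim_B_n}, which rely on \eqref{eq:universal_inequality} and \eqref{Assump2:A_n_to_infinite} and have already been established. I do not expect any step to be a genuine obstacle. The one thing to verify is that $h_{j+1}$ is well defined, i.e.\ finite. This holds because $A_h\to\infty$ by \eqref{Assump2:A_n_to_infinite}, so the defining inequality \eqref{eq:definition_h_n} is eventually satisfied.
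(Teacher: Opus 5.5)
Your proposal is correct and follows the paper's own proof: Lemma~\ref{lemma:momentL2} gives $s_{h_j}^2 \asymp B_j$ and $\sigma_j^2 \asymp B_{j+1}-B_j$, and \eqref{eq:B_nPlus1_sim_B_n} turns the latter into $B_j^{1-\delta/2}$. The one slip is in your side remark on constants: $K(p)$ enters once from each of the two two-sided bounds, so the comparison constant is of order $K(p)^2$ rather than $K(p)$, which does not affect the claim.
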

\begin{proof}
  By \eqref{Assump2:A_n_to_infinite} and Lemma \ref{lemma:momentL2},
  \begin{equation}\label{eq:asymptotics_s_h_j_sigma_j}
    s_{h_j}^2 \asymp B_j,\quad \text{and}\quad \sigma_j^2 \asymp (B_{j+1}-B_{j}).
  \end{equation}
  Then, by \eqref{eq:definition_h_n} and \eqref{eq:B_nPlus1_sim_B_n},
  \begin{equation}
    \frac{s_{h_j}^2}{\sigma_j^2} \asymp \frac{B_j}{B_{j+1}-B_j} \asymp B_j^{\delta/2}.
  \end{equation}
\end{proof}
\begin{lemma}\label{lemma:s_h_j_diff_LLN}
  We have
  \begin{align}
    \frac{s_{h_{j+1}}^2 -s_{h_j}^2}{\sigma_j^2} &= 1+ O(B_j^{-\delta/4}\cdot (\log B_j)^{1/2}) \quad \text{as $j\to \infty$; and}\label{eq:s_h_j_LLN}\\
    s_{h_{M+1}}^2 -\sum_{j=1}^{M}\sigma_j^2 &= O(B_M^{1-\delta/8}\cdot (\log B_M)^{1/2} ) \quad \text{as $M\to \infty$.}\label{eq:s_h_Mplus1_sum_sigma_j}
  \end{align}
\end{lemma}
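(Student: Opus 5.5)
We expand $s_{h_{j+1}}^2 = E[(S_{h_j}+Y_j)^2]$, which gives
\begin{equation}
  s_{h_{j+1}}^2 - s_{h_j}^2 - \sigma_j^2 = 2E[S_{h_j} Y_j] = 2\sum_{k\le h_j}\sum_{l\in I_j} a_k a_l \alpha^{l-k},
\end{equation}
using $E[X_kX_l]=\alpha^{|l-k|}$ with $\alpha=2p-1$. The whole problem is therefore to bound this cross covariance between consecutive blocks. By Lemma \ref{lemma:s_h_j_asymp_sigma_j_B_j}, or directly by \eqref{eq:asymptotics_s_h_j_sigma_j}, we have $\sigma_j^2\asymp B_{j+1}-B_j\asymp B_j^{1-\delta/2}$. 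Hence \eqref{eq:s_h_j_LLN} amounts to showing
\begin{equation}
  |E[S_{h_j}Y_j]| = O\bigl(B_j^{1-3\delta/4}(\log B_j)^{1/2}\bigr).
\end{equation}

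Since $|\alpha|<1$, only pairs $(k,l)$ with $l-k$ small contribute substantially. We split according to $l-k\le L$ and $l-k>L$, with $L = c\log B_j$ and $c$ large enough that $|\alpha|^{L}\le B_j^{-2}$.

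For the far pairs, \eqref{eq:max_a_k_upperbound} gives $|a_ka_l|\le K B_{j+1}^{1-\delta}$. The number of pairs is at most $h_{j+1}^2$, which is polynomial in $B_{j+1}$ by the reverse of \eqref{eq:A_n_polynomially_bound}; to make this legitimate I would instead sum the geometric weights, which gives at most $h_{j+1}\sum_{m>L}|\alpha|^m$. If $h_{j+1}$ is not polynomially controlled, I would use Cauchy--Schwarz in the form $\sum_{k,l}|a_ka_l||\alpha|^{l-k}\le \frac{1}{1-|\alpha|}\cdot\frac12\sum(a_k^2+a_l^2)|\alpha|^{\cdot}$ restricted to $l-k>L$. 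This yields $O(|\alpha|^{L}B_{j+1})=O(B_j^{-1})$.

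For the near pairs, $k\in(h_j-L,h_j]$ and $l\in(h_j,h_j+L]$, and Cauchy--Schwarz gives
\begin{equation}
  \sum |a_ka_l||\alpha|^{l-k}\le \Bigl(\sum_{k=h_j-L+1}^{h_j} a_k^2\Bigr)^{1/2}\Bigl(\sum_{l=h_j+1}^{h_j+L} a_l^2\Bigr)^{1/2}\cdot C.
\end{equation}
I need a slightly sharper accounting here. Each sum has at most $L$ terms of size at most $KB_{j+1}^{1-\delta}$, so the product is $O(L\,B_j^{1-\delta})=O(B_j^{1-\delta}\log B_j)$. This is better than needed, since $B_j^{1-\delta}\log B_j\le B_j^{1-3\delta/4}(\log B_j)^{1/2}$ for large $j$. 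A cruder variant also suffices: $\sum|a_k||\alpha|^{h_j-k}\le C\max|a_k|$ and $\sum_l|a_l||\alpha|^{l-h_j}\le C\max|a_l|$, so the cross term is $O(B_{j+1}^{1-\delta})$ outright with no logarithm. This suggests the logarithm in the statement comes from the author's particular argument, and either bound proves \eqref{eq:s_h_j_LLN}. Dividing by $\sigma_j^2\asymp B_j^{1-\delta/2}$ gives relative error $O(B_j^{-\delta/2})$, which is within $O(B_j^{-\delta/4}(\log B_j)^{1/2})$.

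For \eqref{eq:s_h_Mplus1_sum_sigma_j}, we telescope:
\begin{equation}
  s_{h_{M+1}}^2-\sum_{j\le M}\sigma_j^2=\sum_{j\le M}2E[S_{h_j}Y_j].
\end{equation}
Each summand is $O(\sigma_j^2B_j^{-\delta/4}(\log B_j)^{1/2})$, and $\sum_{j\le M}\sigma_j^2\asymp B_{M+1}\asymp B_M$. The sum is therefore $O(B_M^{1-\delta/4}(\log B_M)^{1/2})$, which is $O(B_M^{1-\delta/8}(\log B_M)^{1/2})$, using $B_{M+1}\sim B_M$ from \eqref{eq:B_nPlus1_sim_B_n}.

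The main obstacle is making the cross covariance bound uniform: the weights may change sign and grow, so naive absolute bounds must be controlled by $\max a_k^2\le KA^{1-\delta}$ together with geometric decay. That control is exactly what the geometric-sum bound above provides.
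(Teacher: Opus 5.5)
Your argument is correct, up to one summation step noted below. It follows a genuinely different route from the paper's.

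\textbf{The two routes.} The paper splits $Y_j$ into its first $H_j$ summands and the rest. It bounds the rest by Ibragimov's covariance inequality (Lemma~\ref{lemma:Ibragimov}), which gives the negligible term $|\alpha|^{H_j/2}s_{h_j}\sigma_j$. It bounds the first piece by plain Cauchy--Schwarz, $2s_{h_j}(A_{h_j+H_j}-B_j)^{1/2}$. You instead write the cross term exactly and factor $|\alpha|^{l-k}=|\alpha|^{h_j-k}|\alpha|^{l-h_j}$, so that
\[
|E[S_{h_j}Y_j]|\le\Bigl(\sum_{k\le h_j}|a_k||\alpha|^{h_j-k}\Bigr)\Bigl(\sum_{l\in I_j}|a_l||\alpha|^{l-h_j}\Bigr)\le\frac{|\alpha|}{(1-|\alpha|)^2}\max_{k\le h_{j+1}}a_k^2\ll B_j^{1-\delta}.
\]
This gives relative error $O(B_j^{-\delta/2})$, with no logarithm.

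\textbf{What your route buys.} Since $s_{h_j}\asymp B_j^{1/2}$ while $\sigma_j\asymp B_j^{1/2-\delta/4}$, the paper's Cauchy--Schwarz term can be as large as $H_j^{1/2}B_j^{1-\delta/2}\asymp\sigma_j^2H_j^{1/2}$. That is larger than the displayed $\sigma_j^2B_j^{-\delta/4}H_j^{1/2}$ by a factor $B_j^{\delta/4}$. For $a_k\equiv1$ the true cross term is $O(1)$, while that bound is of order $(h_j\log h_j)^{1/2}\gg\sigma_j^2$. Pairing the full $L^2$ norm of $S_{h_j}$ with the short piece throws away the decay on the left of the block boundary. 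Your two-sided geometric weighting is exactly what restores it.

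\textbf{Simplification.} Given the factorized bound, the near/far split is superfluous and can be deleted. This also disposes of your first far-pair attempt, which was not valid. There $h_{j+1}$ admits no bound in terms of $B_{j+1}$, since weights may vanish on long stretches, so there is no ``reverse'' of \eqref{eq:A_n_polynomially_bound}.

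\textbf{The summation step.} For \eqref{eq:s_h_Mplus1_sum_sigma_j}, you combine the per-block bound $\sigma_j^2B_j^{-\delta/4}(\log B_j)^{1/2}$ with $\sum_{j\le M}\sigma_j^2\asymp B_M$. You cannot then simply pull out $B_M^{-\delta/4}$, because the weight $B_j^{-\delta/4}$ is largest for small $j$. There are two ways to fix this.
\begin{itemize}
\item Compare the sum with $\int^{B_M}x^{-\delta/4}(\log x)^{1/2}\,dx$, using $B_{j+1}\sim B_j$. This does give your rate $B_M^{1-\delta/4}(\log B_M)^{1/2}$.
\item Argue as the paper does. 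For $j\le M$, $B_j^{-\delta/4}\le B_M^{1-\delta/8}B_j^{-1-\delta/8}$, and $\sum_j(B_{j+1}-B_j)B_j^{-1-\delta/8}<\infty$ by Theorem~\ref{thm:AbelDiniPringsheim}. This yields the stated $O(B_M^{1-\delta/8}(\log B_M)^{1/2})$.
\end{itemize}
Either device works equally well starting from your sharper per-block bound $B_j^{1-\delta}$.
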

\begin{proof}
  Using Minkowski's inequality,
  \begin{align}
    |s_{h_{j+1}} - s_{h_{j}}| = | E[ (S_{h_j} + Y_j)^2 ]^{1/2} - E[(S_{h_j})^2]^{1/2} | \le E[(Y_j)^2]^{1/2} =\sigma_j.
  \end{align}
  By Lemma \ref{lemma:s_h_j_asymp_sigma_j_B_j}, we have
  \begin{equation}
    s_{h_{j}} \sim s_{h_{j+1}}\quad \text{as $j\to \infty$}.
  \end{equation}
  Since
  \begin{equation}
    s_{h_{j+1}}^2 = E[ ( S_{h_j} + Y_j)^2 ]=s_{h_j}^2 + 2E[S_{h_j}Y_j ] + \sigma_j^2,
  \end{equation}
  we have
  \begin{align}
    |s_{h_{j+1}}^2 -s_{h_j}^2 - \sigma_j^2| &\le 2| E[S_{h_j}Y_j ] |\\
    &\le 2\left|E\left[S_{h_j} \sum_{k=h_j+1}^{h_j+H_j} a_k X_k\right]\right| + 2\left|E\left[S_{h_j} \sum_{k = h_j+H_j+1 }^{h_{j+1}} a_k X_k\right]\right|\\
    &\le 2s_{h_j}(A_{h_j+H_j} -B_j)^{1/2} + 2|\alpha|^{H_{j}/2}s_{h_j} \sigma_{j}\\
    &\ll \sigma_j B_j^{\delta/4} (A_{h_j+H_j} -B_j)^{1/2} + |\alpha|^{H_j/2} B_{j}^{\delta/4} \sigma_j^2 \label{eq:s_h_jplus1_ineq}
  \end{align}
  by the Cauchy--Schwarz inequality, Lemma \ref{lemma:Ibragimov}, Lemma \ref{lemma:momentL2} and Lemma \ref{lemma:s_h_j_asymp_sigma_j_B_j}, where
  \begin{equation}\label{eq:H_n}
  H_n := \bigg\lfloor\frac{24\log B_n}{\log (1/|\alpha|)} \bigg\rfloor +1.
\end{equation}
If $h_j + H_j \ge h_{j+1}$, then we replace $h_j+H_j$ appeared in the first term of \eqref{eq:s_h_jplus1_ineq} with $h_{j+1}$, and the second term of \eqref{eq:s_h_jplus1_ineq} is $0$. By \eqref{eq:max_a_k_upperbound} and \eqref{eq:B_nPlus1_sim_B_n},
\begin{align}
  (A_{h_j+H_j} -B_j)^{1/2} &\ll H_j^{1/2} B_j^{1/2-\delta/2}. 
\end{align}
Noting that
\begin{equation}
  x^{\tfrac{c\log y}{\log (1/x)}} = y^{-c} \quad \text{for any $x\in (0,1)$, $y\in (0,\infty)$ and $c\in (0,\infty)$},
\end{equation}
we have
\begin{equation}\label{eq:alpha_power_H_n}
  |\alpha|^{H_j/2} \ll B_j^{-6}\quad \text{as $j\to \infty$.}
\end{equation}
Thus, we can continue the chain of the inequality \eqref{eq:s_h_jplus1_ineq},
\begin{align}
  |s_{h_{j+1}}^2 -s_{h_j}^2 - \sigma_j^2| &\ll \sigma_j^2 B_{j}^{-\delta/4} H_j^{1/2} + \sigma_j^2 B_{j}^{-6+\delta/4}\\
  &\ll \sigma_j^2 B_{j}^{-\delta/4} (\log B_j)^{1/2} \quad \text{as $j\to \infty$},\label{eq:two_s_h_jplus1_ineq}
\end{align}
which yields \eqref{eq:s_h_j_LLN}. As $M\to \infty$, we obtain
\begin{align}
  \left|s_{h_{M+1}}^2 - \sum_{j=1}^{M} \sigma_{j}^2\right| &\le \sum_{j=1}^{M} |s_{h_{j+1}}^2 -s_{h_j}^2 -\sigma_j^2 |\\
  &\ll \sum_{j=1}^{M} \sigma_j^2 B_{j}^{-\delta/4} (\log B_j)^{1/2}\\
  &\ll (\log B_M)^{1/2} B_M^{1-\delta/8}\sum_{j=1}^{M} \frac{B_{j+1}-B_{j}}{B_{j}^{1+\delta/8}}\\
  &\ll (\log B_M)^{1/2} B_M^{1-\delta/8}
\end{align}
by \eqref{eq:two_s_h_jplus1_ineq}, \eqref{eq:B_nPlus1_sim_B_n}, \eqref{eq:asymptotics_s_h_j_sigma_j} and Theorem \ref{thm:AbelDiniPringsheim}.
\end{proof}

The following lemma is the law of large numbers for $\{Y_j^2\}$.
\begin{lemma}\label{lemma:LLN_Y_j}
  As $M\to \infty$,
  \begin{equation}
    s_{h_{M+1}}^2 - \sum_{j=1}^M Y_j^2 = O(B_M^{1-\delta/16}) \quad \text{a.s.}
  \end{equation}
\end{lemma}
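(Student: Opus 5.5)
By \eqref{eq:s_h_Mplus1_sum_sigma_j} in Lemma \ref{lemma:s_h_j_diff_LLN}, $s_{h_{M+1}}^2-\sum_{j\le M}\sigma_j^2 = O(B_M^{1-\delta/8}(\log B_M)^{1/2}) = O(B_M^{1-\delta/16})$. It therefore suffices to prove
\begin{equation}
  \sum_{j=1}^{M} (Y_j^2-\sigma_j^2) = O(B_M^{1-\delta/16}) \quad \text{a.s.}
\end{equation}
We write $Z_j := Y_j^2-\sigma_j^2$. These are centered, and each is a function of the block $\{X_k\colon k\in I_j\}$. Since $\{X_k\}$ is $\phi$-mixing with $\phi(m)=|\alpha|^m/2$, the covariance inequality of Lemma \ref{lemma:Ibragimov} gives
\begin{equation}
  |E[Z_iZ_j]| \ll \phi(h_j-h_{i+1})^{1/2}\, \|Z_i\|_2\|Z_j\|_2 \quad \text{for } i<j.
\end{equation}
The gap $h_j-h_{i+1}$ is at least $j-i-1$, since blocks are nonempty. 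I will also need a fourth-moment bound $E[Y_j^4]\ll \sigma_j^4$. This follows from a standard moment inequality for $\phi$-mixing weighted sums; alternatively, apply Burkholder to the martingale part $\sum a_k d_k$ and control the remainder term in \eqref{eq:mart_repr} crudely, giving $E[Y_j^4]\ll (\sum_{k\in I_j}a_k^2)^2\asymp\sigma_j^4$ by Lemma \ref{lemma:momentL2}. Hence $\|Z_j\|_2\ll\sigma_j^2\asymp B_{j+1}-B_j\asymp B_j^{1-\delta/2}$.

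Combining these, the variance of a sum over a stretch of consecutive blocks $j\in(N,M]$ satisfies
\begin{equation}
  E\Bigl[\Bigl(\sum_{N<j\le M} Z_j\Bigr)^2\Bigr]\ll \sum_{N<j\le M}\sigma_j^4 \ll B_M^{1-\delta/2}(B_M-B_N),
\end{equation}
because the off-diagonal terms are summable geometrically in $j-i$. To get an almost sure statement I would use the Gál--Koksma / Serfling type strong law for quasi-orthogonal sums, or equivalently a direct dyadic argument along $B$-levels. Pick indices $M_\ell$ with $B_{M_\ell}\approx 2^\ell$; this is possible since $B_{n+1}\sim B_n$. By a Rademacher--Menshov maximal inequality, with the $\log^2$ factor applied to $\max_{M_\ell<M\le M_{\ell+1}}$, the second moment of the maximum is at most $2^{\ell(2-\delta/2)}\,\ell^2$. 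Chebyshev at level $2^{\ell(1-\delta/16)}$ then gives a probability at most $\ell^2 2^{-\ell(\delta/2-\delta/8)}$, which is summable. Borel--Cantelli yields the claim, with room to spare (even $1-\delta/8+\varepsilon$ would do).

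The main obstacle is the fourth-moment bound $E[Y_j^4]\ll\sigma_j^4$ for the dependent block sums with arbitrary signed weights. Through the martingale decomposition, the boundary term $\alpha a_{h_{j+1}}X/(1-\alpha)$ is harmless because $a_k^2\le KB_j^{1-\delta}\ll\sigma_j^2$. The term $\sum(a_{k+1}-a_k)X_k$, however, need not be small. I would first try handling $Y_j$ directly via the $\phi$-mixing moment inequality, for example Lemma \ref{lemma:Ibragimov} applied to quadruple products. The fallback is the Gordin-type decomposition $X_k$ in terms of $d_k$ at the level of individual steps, namely $X_k=\sum_{i\le k}\alpha^{k-i}d_i$. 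This writes $Y_j$ as a martingale transform with coefficients $c_i=\sum_{k\ge i,k\in I_j}a_k\alpha^{k-i}$, and Burkholder then bounds the fourth moment by $(\sum c_i^2)^2\ll(\sum a_k^2)^2$, using Young's inequality for the geometric convolution, plus contributions from $i\le h_j$, which are bounded the same way.
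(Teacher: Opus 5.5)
Your proof is correct, and its skeleton is the paper's. It uses the same reduction via \eqref{eq:s_h_Mplus1_sum_sigma_j} to $\sum_{j\le M}Z_j$ with $Z_j=Y_j^2-\sigma_j^2$. It then bounds sums of consecutive $Z_j$ in $L^2$ using Lemma \ref{lemma:Ibragimov} and fourth moments, and finishes with a maximal inequality and Borel--Cantelli along a sparse subsequence. The differences are in the tools.

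\textbf{Off-diagonal covariances.} For $j\le k-2$ the paper uses that blocks are long, $h_k-h_{j+1}\ge h_k-h_{k-1}\gg H_k$, to get a factor $B_k^{-6}$, and it treats $k=j+1$ by Cauchy--Schwarz. You instead observe that the separation between blocks $j<k$ is at least $k-j$. So the covariances decay geometrically in $k-j$ and sum to a constant times $\sum\sigma_j^4$. This is simpler and needs no block-length estimate.

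\textbf{Almost sure step.} The paper applies Chebyshev to $\sum_{j\le m}Z_j$ along the polynomial subsequence $B_{m_r}^{\delta/4}\ge r^2$. It controls the fluctuations in between with Billingsley's inequality (Lemma \ref{lemma:Billingsley_maximal_ineq}), taking $\theta$ slightly above $1$. You use dyadic levels of $B$ and the Rademacher--Menshov inequality in its M\'oricz--Serfling form. That form applies because your $L^2$ bound holds uniformly for every stretch of consecutive blocks with an additive right-hand side. Its $\ell^2$ factor is legitimate: each block raises $B$ by at least $B^{1-\delta/2}$, so a dyadic level contains $\ll 2^{\ell\delta/2}$ blocks.

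\textbf{Fourth moments.} The bound you flag as the main obstacle is exactly Lemma \ref{lemma:4th_moment} (Yoshihara), which the paper simply cites. Your fallback is a valid self-contained proof of it for this chain: write $X_k=\sum_{i\le k}\alpha^{k-i}d_i$, apply Burkholder's inequality, and use Young's inequality for the geometric convolution. You are also right that routing through \eqref{eq:mart_repr} would be circular.
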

\begin{proof}
  In order to prove Lemma \ref{lemma:LLN_Y_j}, it is sufficient to show
  \begin{claim}\label{claim:Y_j_remainder}
    As $M\to\infty$,
    \begin{equation}
      \sum_{j=1}^{M}(Y_j^2 -\sigma_j^2) \ll B_M^{1-\delta/16} \quad \text{a.s.}
    \end{equation}
  \end{claim}
  \begin{proof}
    Let $Z_j:=Y_j^2-\sigma_j^2$. A direct calculation shows that
    \begin{equation}\label{eq:sum_Z_j_2nd_moment}
      E\biggl[\biggl(\sum_{j=m}^n Z_j\biggr)^2\biggr] = \sum_{j=m}^n E[(Z_j)^2]+ 2\sum_{m\le j<k \le n} E[Z_jZ_k]
    \end{equation}
    for $1\le m<n$.
    By Lemma \ref{lemma:4th_moment},
    \begin{equation}\label{eq:Z_j_2nd_moment}
      E[(Z_j)^2] \le E[(Y_j)^4] \ll (B_{j+1}-B_j)^2.
    \end{equation}
    For the second term of \eqref{eq:sum_Z_j_2nd_moment}, we distinguish the cases $j=k-1$ and $j\le k-2$. We have, by \eqref{eq:Z_j_2nd_moment} and H{\"o}lder's inequality,
    \begin{align}
      E[|Z_jZ_{j+1}|] & \le E[(Z_j)^2]^{1/2} E[(Z_{j+1})^2]^{1/2}\\
      &\ll (B_{j+1}-B_j)(B_{j+2}-B_{j+1}).
    \end{align}
    We consider the case $j\le k-2$. By \eqref{eq:phi_definition}, Lemma \ref{lemma:Ibragimov}, \eqref{eq:Z_j_2nd_moment} and \eqref{eq:alpha_power_H_n},
    \begin{align}
      |E[Z_j Z_k]| &\le 2 (\phi(h_k -h_{j+1}))^{1/2} E[(Z_j)^2]^{1/2} E[(Z_k)^2]^{1/2}\\
      &\ll |\alpha|^{(h_k-h_{j+1})/2} (B_{j+1}-B_j)(B_{k+1}-B_{k})\\
      &\ll |\alpha|^{H_k/2} (B_{j+1}-B_j)(B_{k+1}-B_{k})\\
      &\ll B_{k}^{-6}(B_{j+1}-B_j)(B_{k+1}-B_{k}),
    \end{align}
    where $H_k$ is defined by \eqref{eq:H_n}. Note that, for large $j\le k-2$,
    \begin{equation}
      h_k - h_{j+1} \ge h_k -h_{k-1} \gg H_k,
    \end{equation}
    because, by \eqref{eq:max_a_k_upperbound},
    \begin{equation}
      A_{h_k} - A_{h_{k-1}} \ll (h_k -h_{k-1}) B_k^{1-\delta},
    \end{equation}
    and so
    \begin{equation}
      h_k-h_{k-1} \gg B_{k}^{\delta/2} \gg H_k
    \end{equation}
    by \eqref{eq:definition_h_n} since $\log x \ll x^{\delta/2}$ as $x\to \infty$.
    Substituting these estimates into \eqref{eq:sum_Z_j_2nd_moment}, we find that
    \begin{align}
      E\biggl[\biggl(\sum_{j=m}^n Z_j\biggr)^2\biggr] &\ll \sum_{j=m}^{n} (B_{j+1}-B_j) (B_{j+2}-B_{j})\\
      &\ll \sum_{j=m}^{n} (B_{j+1} -B_{j}) B_{j}^{1-\delta/2}\\
      &\le \sum_{j=m}^{n} \int_{B_{j}}^{B_{j+1}} x^{1-\delta/2} dx\\
      &= \frac{1}{2-\delta/2} (B_{n+1}^{2-\delta/2}-B_{m}^{2-\delta/2}).\label{eq:inequality_sum_Z_j_2nd_moment}
    \end{align}
    Thus, we have, by Chebyshev's inequality and \eqref{eq:inequality_sum_Z_j_2nd_moment},
    \begin{equation}
      P\biggl(\biggl|\sum_{j=1}^m Z_j \biggr| \ge B_m^{1-\delta/8} \biggr) \ll B_m^{-\delta/4}.\label{eq:sum_Z_j_concentration_ineq}
    \end{equation}
    For each positive integer $r$, let $m_r$ be the smallest integer $m$ with $B_m^{\delta/4}\ge r^2$. Then by \eqref{eq:sum_Z_j_concentration_ineq} and the Borel--Cantelli Lemma,
    \begin{equation}\label{eq:m_r_as_convergence}
      \sum_{j=1}^{m_r} Z_j \ll B_{m_r}^{1-\delta/8} \quad \text{a.s.}
    \end{equation}
    Letting $\displaystyle 1<\theta < \frac{2-(3\delta)/8}{2-\delta/2}$, it follows from Lemma \ref{lemma:Billingsley_maximal_ineq} and \eqref{eq:inequality_sum_Z_j_2nd_moment} that
    \begin{align}
      P\biggl(\max_{m_r < m\le m_{r+1}} \biggl|\sum_{j=m_r}^{m} Z_j\biggr| \ge B_{m_r}^{1-\delta/16}\biggr) &\ll B_{m_{r+1}}^{\theta(2-\delta/2)} B_{m_r}^{-2 + \delta/8}\\
      &\ll B_{m_r}^{-\delta/4} \ll r^{-2}.
    \end{align}
    Thus, by the Borel--Cantelli Lemma again, we obtain
    \begin{equation}\label{eq:general_m_convergence}
      \max_{m_r < m\le m_{r+1}} \biggl|\sum_{j=1}^{m} Z_j \biggr| \ll B_{m_r}^{1-\delta/16} \quad \text{a.s.},
    \end{equation}
    which implies Claim \ref{claim:Y_j_remainder}.
  \end{proof}
  Combining \eqref{eq:s_h_Mplus1_sum_sigma_j} and Claim \ref{claim:Y_j_remainder} shows Lemma \ref{lemma:LLN_Y_j}.
\end{proof}

\subsection{The martingale representation}
Let $\mathcal{G}_j$ be the $\sigma$-field generated by $Y_1,\dots,Y_j$. We show that the Gordin type martingale representation works.
\begin{lemma}\label{lema:martingale_representation}
  We have the relation \eqref{eq:Gordin_decomposition}, where $\{\xi_j\}_{j=1}^{\infty}$ is a martingale difference sequence with respect to $\{\mathcal{G}_{j}\}_{j=1}^{\infty}$ and $u_j$ defined by \eqref{eq:u_j_definition} satisfies
  \begin{equation}\label{eq:u_j_upperbound}
    u_j \ll (B_{j+1} - B_j)^{1/2} B_{j}^{-\delta/8} \quad \text{a.s.}
  \end{equation}
\end{lemma}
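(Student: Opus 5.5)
The plan is to define $u_j$ by \eqref{eq:u_j_definition}, show that the series converges absolutely and is bounded by a \emph{deterministic} quantity, and then set $\xi_j := Y_j - u_j + u_{j+1}$. With this choice \eqref{eq:Gordin_decomposition} holds by construction, and it remains to check that $\{\xi_j\}$ is a martingale difference sequence with respect to $\{\mathcal{G}_j\}$. The key observation is that conditioning on $\mathcal{G}_{j-1}$ can be routed through the finer $\sigma$-field $\mathcal{F}_{h_j}$, where the Markov structure gives explicit conditional expectations. This avoids any appeal to Lemma \ref{lemma:Ibragimov}.

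\textbf{Step 1 (explicit conditional expectations).} Since $\mathcal{G}_{j-1}\subset \mathcal{F}_{h_j}$, the tower property gives $E[Y_{j+\ell}\mid\mathcal{G}_{j-1}] = E\bigl[E[Y_{j+\ell}\mid \mathcal{F}_{h_j}]\mid \mathcal{G}_{j-1}\bigr]$. Iterating \eqref{eq:step_conditional_exp} gives $E[X_k\mid\mathcal{F}_{h_j}] = \alpha^{k-h_j}X_{h_j}$ for $k>h_j$ (for $j=1$, $\mathcal{F}_0$ is trivial and the expectation is $0$). Hence
\begin{equation}
  E[Y_{j+\ell}\mid\mathcal{G}_{j-1}] = \Bigl(\sum_{k\in I_{j+\ell}} a_k\alpha^{k-h_j}\Bigr) E[X_{h_j}\mid\mathcal{G}_{j-1}],
\end{equation}
and since $|X_{h_j}|\le 1$,
\begin{equation}
  \sum_{\ell=0}^\infty \bigl|E[Y_{j+\ell}\mid\mathcal{G}_{j-1}]\bigr| \le \sum_{k=1}^{\infty} |a_{h_j+k}|\,|\alpha|^{k}.
\end{equation}

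\textbf{Step 2 (bounding the tail).} By \eqref{eq:universal_inequality}, $|a_{h_j+k}|\le \sqrt{K}A_{h_j+k}^{(1-\delta)/2}$. Choose $q>1$ with $q^{(1-\delta)/2}|\alpha|<1$, which is possible since $|\alpha|<1$ for $p\neq 1/2$. Then \eqref{eq:A_j_plus_k_le_A_j_exponential} gives, for large $j$,
\begin{equation}
  \sum_{k=1}^{\infty} |a_{h_j+k}|\,|\alpha|^{k} \le \sqrt K\, B_j^{(1-\delta)/2}\sum_{k\ge1}\bigl(q^{(1-\delta)/2}|\alpha|\bigr)^k \ll B_j^{(1-\delta)/2}.
\end{equation}
So the series defining $u_j$ converges absolutely and surely, and $|u_j|\ll B_j^{1/2-\delta/2}$. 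By \eqref{eq:B_nPlus1_sim_B_n}, $(B_{j+1}-B_j)^{1/2}B_j^{-\delta/8}\sim B_j^{1/2-3\delta/8}$. Since $1/2-\delta/2\le 1/2-3\delta/8$, the bound \eqref{eq:u_j_upperbound} follows, deterministically and hence a.s.

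\textbf{Step 3 (martingale property).} The variable $u_j$ is $\mathcal{G}_{j-1}$-measurable, so $\xi_j$ is $\mathcal{G}_j$-measurable. It is also bounded, because $Y_j$ is bounded and $u_j,u_{j+1}$ are uniformly bounded by Step 2. By the uniform bound, conditional expectation may be interchanged with the infinite sum (dominated convergence for conditional expectations). This gives $E[u_{j+1}\mid\mathcal{G}_{j-1}] = \sum_{\ell\ge0}E[Y_{j+1+\ell}\mid\mathcal{G}_{j-1}] = u_j - E[Y_j\mid\mathcal{G}_{j-1}]$. Therefore
\begin{equation}
  E[\xi_j\mid\mathcal{G}_{j-1}] = E[Y_j\mid\mathcal{G}_{j-1}] - u_j + u_j - E[Y_j\mid\mathcal{G}_{j-1}] = 0.
\end{equation}

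The main obstacle I expect is Step 2: verifying that the geometric decay $|\alpha|^k$ beats the possible growth of $|a_{h_j+k}|$ uniformly in $j$, so that the bound is in terms of $B_j$ alone. This is exactly where \eqref{eq:A_j_plus_k_le_A_j_exponential}, with $q$ taken close to $1$, is needed. The finitely many small $j$ are absorbed into the implied constant.
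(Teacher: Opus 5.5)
Your proof is correct and follows essentially the paper's route: make $E[Y_{j+\ell}\mid\mathcal{G}_{j-1}]$ explicit through the Markov property \eqref{eq:step_conditional_exp}, then control the coefficient series $\sum_{k\ge1}|a_{h_j+k}|\,|\alpha|^k$ by \eqref{eq:universal_inequality} and \eqref{eq:A_j_plus_k_le_A_j_exponential}. The paper also splits this series at $k=H_j$ and handles the head by Cauchy--Schwarz, which your argument shows is unnecessary: its own choice $q=1/|\alpha|$ already gives $q^{(1-\delta)/2}|\alpha|<1$, so the tail estimate works from $k=1$ and gives the sharper $B_j^{1/2-\delta/2}$. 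Your version is also more careful in two places the paper only asserts: you condition through $\mathcal{F}_{h_j}$ to get the factor $E[X_{h_j}\mid\mathcal{G}_{j-1}]$, where the paper writes $X_{h_{j-1}}$, a single step that is not in general $\mathcal{G}_{j-1}$-measurable, and you verify $E[\xi_j\mid\mathcal{G}_{j-1}]=0$ explicitly via dominated convergence.
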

\begin{proof}
  Let
  \begin{align}
    u_j &= \sum_{\ell = 0}^{\infty} E[Y_{j+\ell} \mid \mathcal{G}_{j-1}]=\sum_{\ell = 0}^{\infty} \sum_{k\in I_{j+\ell}} a_k E[X_k \mid \mathcal{G}_{j-1}]\\
    &= \sum_{k > h_j} a_k \alpha^{k-h_j+1} X_{h_{j-1}} = X_{h_{j-1}} \sum_{k\ge 1} a_{k+h_j} \alpha^{k+1}.\label{eq:u_j}
  \end{align}
  We decompose the sum \eqref{eq:u_j} into two terms: 
  \begin{equation}
    \Sigma_1 := \sum_{k\le H_j}^{} a_{k+h_j} \alpha^{k+1},\quad\text{and}\quad \Sigma_2 := \sum_{k> H_j}^{} a_{k+h_j} \alpha^{k+1}.
  \end{equation}
  By the Cauchy--Schwarz inequality, \eqref{eq:H_n}, \eqref{eq:B_nPlus1_sim_B_n} and \eqref{eq:B_nPlus1_sim_B_n},
  \begin{align}
    |\Sigma_1| &\le \sum_{1\le k\le H_j}^{} |a_{k+h_j}|\cdot |\alpha|^{k+1}  \le \bigg(\sum_{k\le H_j}^{} |a_{k+h_j}|^2 \bigg)^{1/2} \bigg(\sum_{k\le H_j}^{} |\alpha|^{2(k+1)} \bigg)^{1/2} \\
    & \ll H_j^{1/2} B_{j}^{(1-\delta)/2} \ll (B_{j+1} - B_j)^{1/2} B_j^{-\delta/8}.\label{eq:u_j_first}
  \end{align}
  For the second term $\Sigma_2$, we have
  \begin{align}
    |\Sigma_2| &\le \sum_{k> H_j}^{} |a_{k+h_j}|\cdot |\alpha^{k+1}| \ll \sum_{k> H_j}^{} A_{k+h_j}^{(1-\delta)/2} |\alpha|^{k+1}\\
    &\ll A_{h_j}^{(1-\delta)/2}\sum_{k> H_j}^{}  q^{(1-\delta)k/2} |\alpha|^{k+1} \ll B_j^{1/2} \sum_{k>H_j}^{} q^{-(1+\delta)k/2}\\
    &\ll B_j^{1/2} q^{-H_j/2}\ll B_j^{-1}\label{eq:u_j_second}
  \end{align}
  by \eqref{Assump2:last_term}, \eqref{eq:A_j_plus_k_le_A_j_exponential} for $q:=1/|\alpha|$ and \eqref{eq:alpha_power_H_n}. The estimates \eqref{eq:u_j_first} and \eqref{eq:u_j_second} show that the series $u_j$ converges absolutely with probability one. Hence, letting $\xi_j:=Y_j-u_j+u_{j+1}$, we obtain a martingale difference sequence $\{\xi_j,~\mathcal{G}_j\}_{j=1}^{\infty}$.
\end{proof}

\begin{lemma}\label{lemma:mart_diff_LLN}
  As $M\to \infty$, $\displaystyle s_{h_{M+1}}^2 -\sum_{j=1}^{M} \xi_j^2 = O(B_M^{1-\delta/16})$ a.s.
\end{lemma}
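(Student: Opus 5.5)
Since Lemma \ref{lemma:LLN_Y_j} already gives $s_{h_{M+1}}^2 - \sum_{j=1}^M Y_j^2 = O(B_M^{1-\delta/16})$ a.s., it suffices to show
\begin{equation}
  \sum_{j=1}^{M} \bigl(\xi_j^2 - Y_j^2\bigr) = O(B_M^{1-\delta/16}) \quad \text{a.s.}
\end{equation}
By Lemma \ref{lema:martingale_representation} we have $\xi_j = Y_j - u_j + u_{j+1}$. Writing $v_j := u_{j+1}-u_j$ gives
\begin{equation}
  \xi_j^2 - Y_j^2 = 2Y_j v_j + v_j^2.
\end{equation}
We bound the two resulting sums separately.

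For the quadratic term we use \eqref{eq:u_j_upperbound} and \eqref{eq:B_nPlus1_sim_B_n}. These give $v_j^2 \ll (B_{j+1}-B_j) B_j^{-\delta/4}$ a.s. Summing and comparing with an integral, as in the proof of Lemma \ref{lemma:s_h_j_diff_LLN} (via Theorem \ref{thm:AbelDiniPringsheim} or directly), we get
\begin{equation}
  \sum_{j\le M} v_j^2 \ll \sum_{j\le M} (B_{j+1}-B_j)B_j^{-\delta/4} \ll B_M^{1-\delta/4},
\end{equation}
which is well within the required bound.

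For the cross term we apply the Cauchy--Schwarz inequality:
\begin{equation}
  \Bigl|\sum_{j\le M} Y_j v_j\Bigr| \le \Bigl(\sum_{j\le M} Y_j^2\Bigr)^{1/2}\Bigl(\sum_{j\le M} v_j^2\Bigr)^{1/2}.
\end{equation}
By Lemma \ref{lemma:LLN_Y_j} together with $s_{h_{M+1}}^2 \asymp B_{M+1} \sim B_M$ (Lemma \ref{lemma:momentL2} and \eqref{eq:B_nPlus1_sim_B_n}), we have $\sum_{j\le M} Y_j^2 \ll B_M$ a.s. Combining this with the quadratic-term estimate gives
\begin{equation}
  \Bigl|\sum_{j\le M} Y_j v_j\Bigr| \ll B_M^{1/2}\, B_M^{1/2-\delta/8} = B_M^{1-\delta/8} \quad \text{a.s.},
\end{equation}
again smaller than $B_M^{1-\delta/16}$. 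Adding the two estimates to Lemma \ref{lemma:LLN_Y_j} proves the lemma.

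The main point requiring care is that \eqref{eq:u_j_upperbound} is an almost sure bound with a deterministic right-hand side. This holds because $u_j = X_{h_{j-1}} \cdot (\text{deterministic sum})$ and $|X_{h_{j-1}}|=1$, so the implied constant is nonrandom. We also need $\sum_j (B_{j+1}-B_j)B_j^{-\delta/4} \ll B_M^{1-\delta/4}$. This follows from $B_{j+1}\sim B_j$ and the bound $\int_{B_j}^{B_{j+1}} x^{-\delta/4}\,dx \asymp (B_{j+1}-B_j)B_j^{-\delta/4}$.
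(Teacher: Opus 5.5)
Your proposal is correct and follows the paper's proof essentially step for step: you write $\xi_j^2-Y_j^2$ in terms of $v_j=\pm(u_j-u_{j+1})$, bound $\sum_{j\le M} v_j^2$ via \eqref{eq:u_j_upperbound} and \eqref{eq:B_nPlus1_sim_B_n}, handle the cross term by Cauchy--Schwarz together with $\sum_{j\le M}Y_j^2\ll B_M$, and conclude with Lemma \ref{lemma:LLN_Y_j}. The only difference is that your integral comparison gives the sharper bound $\sum_{j\le M} v_j^2\ll B_M^{1-\delta/4}$, whereas the paper settles for $B_M^{1-\delta/8}$ via Theorem \ref{thm:AbelDiniPringsheim}; either bound is enough.
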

\begin{proof}
  Put
  \begin{equation}\label{eq:def_v_j}
    v_j := u_j - u_{j+1}.
  \end{equation}
  By \eqref{eq:u_j_upperbound} and \eqref{eq:B_nPlus1_sim_B_n},
  \begin{equation}
    v_j^2 \ll u_j^2 + u_{j+1}^2 \ll B_{j}^{-\delta/4}(B_{j+2}-B_j)\ll B_{j+2}^{-\delta/4}(B_{j+2}-B_j) \quad \text{a.s.}
  \end{equation}
  It follows from \eqref{eq:B_nPlus1_sim_B_n} and Theorem \ref{thm:AbelDiniPringsheim} that
  \begin{align}
    \sum_{j=1}^{M} v_j^2 &\ll \sum_{j=1}^{M} B_{j+2}^{-\delta/4}(B_{j+2}-B_j)\\
    &\ll B_{M}^{1-\delta/8} \sum_{j=1}^{M} B_{j+2}^{-1-\delta/8}(B_{j+2}-B_j)\ll B_{M}^{1-\delta/8}\quad \text{a.s.}\label{eq:sum_v_j2}
  \end{align}
  Hence, combining Lemma \ref{lemma:LLN_Y_j} and
  \begin{align}
    \sum_{j=1}^{M} (\xi_j^2-Y_j^2) &= \sum_{j=1}^{M} (-2Y_j v_j + v_j^2)\\
    &\ll \bigg(\sum_{j=1}^{M} Y_j^2\bigg)^{1/2} \bigg(\sum_{j=1}^{M} v_j^2\bigg)^{1/2} + \sum_{j=1}^{M} v_j^2\\
    &\ll B_M^{1/2}\cdot B_M^{1/2-\delta/16} + B_M^{1-\delta/8}\\
    &\ll B_{M}^{1-\delta/16} \quad \text{a.s.}
  \end{align}
  yields Lemma \ref{lemma:mart_diff_LLN}.
\end{proof}
\begin{lemma}\label{lemma:mart_conditional_variance}
  As $M\to\infty$, $\displaystyle \sum_{j=1}^{M} \bigl(E[\xi_j^2\mid \mathcal{G}_{j-1}] - \xi_j^2\bigr) \ll B_M^{1-\delta/8}$ a.s.
\end{lemma}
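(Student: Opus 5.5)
The plan is to view $D_M:=\sum_{j=1}^{M}(E[\xi_j^2\mid\mathcal{G}_{j-1}]-\xi_j^2)$ as a martingale, since its increments $\eta_j:=E[\xi_j^2\mid\mathcal{G}_{j-1}]-\xi_j^2$ form a martingale difference sequence with respect to $\{\mathcal{G}_j\}$. The target bound $B_M^{1-\delta/8}$ will follow from a martingale strong law: we show
\begin{equation}
  \sum_{j=1}^{\infty}\frac{E[\eta_j^2]}{B_j^{2-\delta/4}}<+\infty .
\end{equation}
Given this, Doob's convergence theorem (Corollary 2.2 in \cite{HallHeydemart}, as already used in Section \ref{sec:ProofofASIP_monotone}) shows that $\sum_j \eta_j/B_j^{1-\delta/8}$ converges a.s. Kronecker's lemma then gives $D_M=o(B_M^{1-\delta/8})$ a.s., because $B_j\uparrow\infty$ by \eqref{Assump2:A_n_to_infinite}.

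The work is in bounding $E[\eta_j^2]\le E[\xi_j^4]$, for which a fourth-moment estimate on $\xi_j$ suffices. Write $\xi_j=Y_j-u_j+u_{j+1}$. Then $E[\xi_j^4]\ll E[Y_j^4]+E[u_j^4]+E[u_{j+1}^4]$. Lemma \ref{lemma:4th_moment} gives $E[Y_j^4]\ll(B_{j+1}-B_j)^2$. For $u_j$ we have the explicit form \eqref{eq:u_j}: $u_j=X_{h_{j-1}}\cdot c_j$ with a deterministic $c_j$ and $|X|=1$. The proof of Lemma \ref{lema:martingale_representation} shows $|c_j|\ll(B_{j+1}-B_j)^{1/2}B_j^{-\delta/8}$ deterministically, not only a.s. Hence $u_j^4\ll(B_{j+1}-B_j)^2$, and by \eqref{eq:B_nPlus1_sim_B_n} the same bound holds for $u_{j+1}$. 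Altogether $E[\xi_j^4]\ll(B_{j+1}-B_j)^2$.

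It remains to sum. Using $B_{j+1}-B_j\sim B_j^{1-\delta/2}$ from \eqref{eq:B_nPlus1_sim_B_n},
\begin{equation}
  \sum_{j}\frac{(B_{j+1}-B_j)^2}{B_j^{2-\delta/4}}\ll\sum_j\frac{(B_{j+1}-B_j)B_j^{1-\delta/2}}{B_j^{2-\delta/4}}=\sum_j\frac{B_{j+1}-B_j}{B_j^{1+\delta/4}}<+\infty,
\end{equation}
where the last series converges by Theorem \ref{thm:AbelDiniPringsheim} together with $B_{j+1}\sim B_j$.

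The main obstacle is making sure the bound on $u_j$ is uniform (deterministic) rather than merely a.s., so that it passes to fourth moments. This should be fine because $u_j$ factors as a $\pm1$ variable times a constant. One should also check that $\xi_j$ is genuinely square integrable, so that $\eta_j$ is a well-defined martingale difference; this follows from the same bounds.
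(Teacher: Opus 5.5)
Your proposal is correct and follows the paper's proof essentially step for step: the paper likewise treats $R_j=E[\xi_j^2\mid\mathcal{G}_{j-1}]-\xi_j^2$ as a martingale difference, bounds $E[R_j^2]\ll E[\xi_j^4]\ll E[Y_j^4]+E[v_j^4]$ using Lemma~\ref{lemma:4th_moment} and the bound on $u_j$, shows $\sum_j B_j^{-2+\delta/4}E[R_j^2]<\infty$ via Theorem~\ref{thm:AbelDiniPringsheim}, and concludes with Doob's convergence theorem and Kronecker's lemma. Your observation that the bound on $u_j$ is deterministic makes explicit a point the paper uses tacitly when passing from the a.s.\ bound \eqref{eq:u_j_upperbound} to fourth moments: $u_j$ is a random factor of absolute value at most $1$ times a deterministic coefficient (strictly, that factor is $E[X_{h_j}\mid\mathcal{G}_{j-1}]$ rather than a single $X$, but it is still bounded by $1$, so your argument is unaffected).
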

\begin{proof}
  Letting $R_j:= E[\xi_j^2\mid \mathcal{G}_{j-1}] - \xi_j^2$, the sequence $\{R_j,\ \mathcal{G}_j\}_{j=1}^{\infty}$ is a martingale difference sequence. By \eqref{eq:Gordin_decomposition},
  \begin{align}
    E[R_j^2] & \ll E[\xi_j^4] \ll E[Y_j^4] + E[v_j^4]\\
    &\ll (B_{j+1}-B_j)^2 + (B_{j+2}- B_j)^2 B_{j+2}^{-\delta} \ll (B_{j+2} - B_j) B_{j+2}^{1-\delta/2}.
  \end{align}
  By Theorem \ref{thm:AbelDiniPringsheim},
  \begin{equation}
    \sum_{j=1}^{\infty} B_{j}^{-2+\delta/4} E[R_j^2] \ll \sum_{j=1}^{\infty} (B_{j+2} -B_j) B_{j+2}^{-1-\delta/4} <\infty,
  \end{equation}
  and therefore it follows from Doob's convergence theorem (Corollary 2.2 of Hall and Heyde \cite{HallHeydemart}) that
  \begin{equation}
    \sum_{j=1}^{\infty} B_j^{-1+\delta/8} R_j \ll 1 \quad \text{a.s.}
  \end{equation}
  By Kronecker's Lemma, we obtain
  \begin{equation}
    \sum_{j=1}^{M} R_j \ll B_M^{1-\delta/8} \quad \text{a.s.}
  \end{equation}
\end{proof}
\subsection{The embedding argument}
Recall that the standard Brownian motion $\{B(t) \colon t\ge 0\}$ and the sequence $\{T_n\}_{n=1}^{\infty}$ satisfy \eqref{eq:embedding}, \eqref{eq:Skorokhod_tau_n_expectation} and \eqref{eq:Skorokhod_tau_n_square_expectation}.
\begin{lemma}\label{lemma:embedding_time_LLN}
  As $M\to\infty$,
  \begin{equation}
    \sum_{j=1}^{M}\tau_j - s_{h_{M+1}}^2 =  O(B_M^{1-\delta/16})  \quad \text{a.s.}
  \end{equation}
\end{lemma}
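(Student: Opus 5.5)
We split the quantity into three pieces:
\begin{equation}
  \sum_{j=1}^{M}\tau_j - s_{h_{M+1}}^2 = \sum_{j=1}^{M}\bigl(\tau_j - E[\tau_j\mid \mathcal{G}_{j-1}]\bigr) + \sum_{j=1}^{M}\bigl(E[\xi_j^2\mid \mathcal{G}_{j-1}] - \xi_j^2\bigr) + \Bigl(\sum_{j=1}^{M}\xi_j^2 - s_{h_{M+1}}^2\Bigr).
\end{equation}
Here we use that, by \eqref{eq:Skorokhod_tau_n_expectation}, $E[\tau_j\mid \mathcal{G}_{j-1}] = E[\xi_j^2\mid\mathcal{G}_{j-1}]$ a.s. The $\sigma$-fields in the Skorokhod embedding are the enlarged ones, which contain $\mathcal{G}_{j-1}$; we use the version of Lemma \ref{lemma:Skorokhodembeddingtheorem} in which the conditional identities hold with respect to them.

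The second sum is $O(B_M^{1-\delta/8})$ by Lemma \ref{lemma:mart_conditional_variance}. The third sum is $O(B_M^{1-\delta/16})$ by Lemma \ref{lemma:mart_diff_LLN}. So it remains to show that the first sum is $o(B_M^{1-\delta/16})$, or at least $O$ of it.

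For the first sum we follow the scheme used for \eqref{eq:tau_LLN} in the proof of Theorem \ref{thm:monotone}.
\begin{itemize}
  \item The terms $\tau_j - E[\tau_j\mid\cdot]$ form a martingale difference sequence.
  \item By \eqref{eq:Skorokhod_tau_n_square_expectation}, $E[(\tau_j-E[\tau_j\mid\cdot])^2] \le E[\tau_j^2] \le C_1 E[\xi_j^4]$.
  \item As in the proof of Lemma \ref{lemma:mart_conditional_variance}, $E[\xi_j^4] \ll E[Y_j^4]+E[v_j^4] \ll (B_{j+2}-B_j)B_{j+2}^{1-\delta/2}$, using Lemma \ref{lemma:4th_moment}, \eqref{eq:u_j_upperbound} and \eqref{eq:B_nPlus1_sim_B_n}.
\end{itemize}
We normalize by $B_j^{1-\delta/8}$. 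Then
\begin{equation}
  \sum_j B_j^{-2+\delta/4}E[(\tau_j-E[\tau_j\mid\cdot])^2] \ll \sum_j (B_{j+2}-B_j)B_{j+2}^{-1-\delta/4},
\end{equation}
and this series is finite by Theorem \ref{thm:AbelDiniPringsheim}. Doob's convergence theorem (Hall and Heyde, Corollary 2.2) then gives a.s. convergence of $\sum_j B_j^{-1+\delta/8}(\tau_j - E[\tau_j\mid\cdot])$. Kronecker's lemma yields $\sum_{j\le M}(\tau_j - E[\tau_j\mid\cdot]) = o(B_M^{1-\delta/8})$ a.s.

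Combining the three estimates, the dominant error is the $O(B_M^{1-\delta/16})$ from Lemma \ref{lemma:mart_diff_LLN}, which gives the claim.

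The main obstacle is bookkeeping rather than analysis. First, the $\sigma$-fields in the Skorokhod embedding must be compatible with the martingale structure $\{\xi_j,\mathcal{G}_j\}$, so that $E[\tau_j\mid\cdot]$ can be identified with $E[\xi_j^2\mid\mathcal{G}_{j-1}]$ and both martingale arguments apply to the same filtration. I would handle this by working throughout with the filtration generated by $\mathcal{G}_j$ together with the Brownian path up to $T_j$, since the embedding lemma provides conditional identities with respect to it. Second, the fourth-moment bound on $v_j$ is only almost sure, not in $L^4$. Here one uses that $u_j$ is $X_{h_{j-1}}$ times a deterministic series, so the bound \eqref{eq:u_j_upperbound} is in fact deterministic and transfers to $E[v_j^4]$.
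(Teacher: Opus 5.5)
Your proposal is correct and is essentially the paper's proof: you use the same three-term decomposition via \eqref{eq:Skorokhod_tau_n_expectation}, Lemmas \ref{lemma:mart_conditional_variance} and \ref{lemma:mart_diff_LLN} for the last two sums, and for the first sum the Doob-convergence-plus-Kronecker argument of Lemma \ref{lemma:mart_conditional_variance}, driven by $E[R_j^2]\ll E[\xi_j^4]$. Your extra remarks fill in points the paper leaves implicit: conditioning $\tau_j$ on $\mathcal{G}_{j-1}$ or on the enlarged field gives the same quantity, and the bound \eqref{eq:u_j_upperbound} on $u_j$ holds deterministically, so it legitimately controls $E[v_j^4]$.
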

\begin{proof} By \eqref{eq:Skorokhod_tau_n_expectation},
  \begin{align}
    \sum_{j=1}^{M} \tau_j - s_{h_{M+1}}^2 &= \sum_{j=1}^{M} (\tau_j - E[\tau_j\mid \mathcal{H}_{j-1}]) + \sum_{j=1}^{M} (E[\xi_j^2 \mid \mathcal{G}_{j-1}] - \xi_j^2)\\
    &\quad + \sum_{j=1}^{M}\xi_j^2 - s_{h_{M+1}}^2.
  \end{align}
  By Lemma \ref{lemma:mart_diff_LLN} and Lemma \ref{lemma:mart_conditional_variance}, we can see that the second term and the third term are $O(B_M^{1-\delta/16})$. Letting $R_j=\tau_j-E[\tau_j\mid \mathcal{H}_{j-1}]$, we obtain a martingale difference $\{R_j, \mathcal{H}_j\}_{j=1}^{\infty}$ with
  \begin{equation}
    E[R_j^2] \ll E[\xi_j^4].
  \end{equation}
  By an argument similar to that for Lemma \ref{lemma:mart_conditional_variance}, we find the first term is also $O(B_M^{1-\delta/16})$.
\end{proof}
If $h_{M}< n \le h_{M+1}$, then
\begin{align}
  s_{h_{M}}^2 - s_{n}^2 &= -2E\biggl[S_{h_{M}} \sum_{k=h_{M}+1}^{n} a_k X_k\biggr] + E\biggl[\biggl(\sum_{k=h_{M}+1}^{n} a_k X_k\biggr)^2\biggr]\\
  &\ll s_{h_{M}} \sigma_{M} + \sigma_{M}^2\\
  &\ll s_{h_{M}}^2 B_{M}^{-\delta/4}
\end{align}
by the Cauchy--Schwarz inequality and the fact
\begin{equation}
  E\biggl[\biggl(\sum_{k=h_{M}+1}^{n} a_k X_k\biggr)^2\biggr] \ll \sigma_M^2.
\end{equation}
Thus, if $h_{M} <n \le h_{M+1}$, then, by Lemma \ref{lemma:embedding_time_LLN},
\begin{equation}\label{eq:sum_tau_j_asymptotic}
  \sum_{j=1}^{M-1} \tau_j  = s_n^{2} + O(B_M^{1-\delta/16})\quad \text{a.s.}
\end{equation}
We show that the remainder $\displaystyle \sum_{k=h_M+1}^{n} a_k X_k$ in the blocking procedure is sufficiently small.
\begin{lemma}\label{lemma:blocking_remainder}
  As $M\to\infty$,
  \begin{equation}\label{eq:blocking_remainder}
    \max_{h_M<n\le h_{M+1}}\left|\sum_{k=h_M+1}^{n} a_k X_k\right| \ll B_M^{1/2-\delta/16} \quad \text{a.s.}
  \end{equation}
\end{lemma}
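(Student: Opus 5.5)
We will prove \eqref{eq:blocking_remainder} by a fourth-moment maximal inequality on each block, followed by the Borel--Cantelli lemma over the block index $M$.

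Write $D_M:=\max_{h_M<n\le h_{M+1}}\bigl|\sum_{k=h_M+1}^{n} a_kX_k\bigr|$. The first step is a uniform fourth-moment bound for partial sums inside a block. For $h_M\le m<n\le h_{M+1}$ we want
\begin{equation}
  E\biggl[\biggl(\sum_{k=m+1}^{n}a_kX_k\biggr)^4\biggr]\ll (A_n-A_m)^2 .
\end{equation}
This is the same estimate as Lemma \ref{lemma:4th_moment}, which gives $E[Y_j^4]\ll(B_{j+1}-B_j)^2$. That lemma relies only on the $\phi$-mixing rate \eqref{eq:phi_definition} and the boundedness of $X_k$, so it applies verbatim to an arbitrary segment of indices.

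The second step turns this into a bound on the maximum. Since the right-hand side $(A_n-A_m)^2$ is the square of a superadditive function of the interval, Lemma \ref{lemma:Billingsley_maximal_ineq} (the Billingsley/M\'oricz type maximal inequality already used in the proof of Claim \ref{claim:Y_j_remainder}) gives
\begin{equation}
  P\bigl(D_M\ge \lambda\bigr)\ll \lambda^{-4}(B_{M+1}-B_M)^2 .
\end{equation}
If that lemma is stated for second moments only, I would instead use the classical M\'oricz bound $E[D_M^4]\ll (B_{M+1}-B_M)^2$, which follows from the fourth-moment bound because the exponent of the superadditive function is $2>1$. In either form we take $\lambda=\varepsilon B_M^{1/2-\delta/16}$. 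By \eqref{eq:B_nPlus1_sim_B_n} we have $B_{M+1}-B_M\sim B_M^{1-\delta/2}$, and therefore
\begin{equation}
  P\bigl(D_M\ge \varepsilon B_M^{1/2-\delta/16}\bigr)\ll \frac{B_M^{2-\delta}}{B_M^{2-\delta/4}}=B_M^{-3\delta/4}.
\end{equation}

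The third step is summing over $M$, and I expect this to be the main obstacle. The growth of $B_M$ in $M$ is only polynomial-type, so $\sum_M B_M^{-3\delta/4}$ need not converge on its face. Write $B_{M+1}-B_M\asymp B_M^{1-\delta/2}$. Then $B_M^{-3\delta/4}\asymp (B_{M+1}-B_M)\,B_M^{-1-\delta/4}$, and the Abel--Dini--Pringsheim theorem (Theorem \ref{thm:AbelDiniPringsheim}) shows that this series converges. The Borel--Cantelli lemma then gives $D_M\le\varepsilon B_M^{1/2-\delta/16}$ eventually, almost surely, which is \eqref{eq:blocking_remainder}.

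Should the maximal inequality be available only at second order, the fallback is to proceed as in Claim \ref{claim:Y_j_remainder}. First, apply Chebyshev to the endpoint sums and use a dyadic subdivision of the block in terms of $A$-mass. Second, take a union bound over the $O(\log B_M)$ dyadic levels. The logarithmic loss this produces is absorbed by the slack between the exponents $3\delta/4$ and $\delta/4$.
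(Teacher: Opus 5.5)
Your proposal is correct and follows the paper's proof essentially step for step. The paper uses Markov's inequality with the fourth-moment bound of Lemma~\ref{lemma:4th_moment}, then Billingsley's maximal inequality (Lemma~\ref{lemma:Billingsley_maximal_ineq}), then summation via Theorem~\ref{thm:AbelDiniPringsheim} and Borel--Cantelli. Since Lemma~\ref{lemma:Billingsley_maximal_ineq} is stated for general $\gamma\ge 0$ and $\theta>1$, you can take $\gamma=4$, $\theta=2$, $c_k=K_2^{1/2}a_k^2$ directly, so your second-order fallback is not needed. Your exponent $\lambda^{-4}\asymp B_M^{-2+\delta/4}$ is the right one (the paper's displayed $B_M^{-2-\delta/4}$ is a sign slip), and it gives the same summable bound $B_M^{-3\delta/4}\asymp (B_{M+1}-B_M)B_M^{-1-\delta/4}$.
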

\begin{proof}
  For any $n\in (h_{M}, h_{M+1}]$, by Markov's inequality, Lemma \ref{lemma:4th_moment} and \eqref{eq:B_nPlus1_sim_B_n},
  \begin{align}
    P\biggl(\biggl|\sum_{k=h_M+1}^{n} a_k X_k\biggr| \ge B_M^{1/2-\delta/16}\biggr) &\le K_2\cdot B_{M}^{-2-\delta/4} (B_{M+1}-B_{M})^2\\
    & \le K_2 B_{M}^{-1-\delta/4} (B_{M+1} - B_{M}).
  \end{align}
  By Lemma \ref{lemma:Billingsley_maximal_ineq}, Theorem \ref{thm:AbelDiniPringsheim} and the Borel--Cantelli Lemma, we have \eqref{eq:blocking_remainder}.
\end{proof}
Finally, let $n\ge 1$ be any integer. Choosing $M$ such that $n\in (h_{M}, h_{M+1}]$, we have
\begin{align}
  \sum_{k=1}^{n} a_k X_k - B(s_n^2) &= \sum_{j=1}^{M-1} Y_j + \sum_{j=h_{M}+1}^{n} a_k X_k - B(s_n^2)\\
  &= \sum_{j=1}^{M-1} (u_j-u_{j+1}) + \sum_{j=h_{M}+1}^{n} a_k X_k + B\biggl(\sum_{j=1}^{M-1} \tau_j\biggr) - B(s_n^2) \\
  &\ll u_{M} + \max_{h_{M}<n \le h_{M+1}} \biggl|\sum_{j=h_{M}+1}^{n} a_k X_k\biggr|\\
  &\quad + \sup_{h_{M}<n\le h_{M+1}}\biggl|B\biggl(\sum_{j=1}^{M-1} \tau_j\biggr) - B(s_n^2)\biggr|\\
  &\ll B_M^{1/2-\lambda_1} \ll s_{h_{M}}^{1-2\lambda_1} \ll s_n^{1-2\lambda_1} \quad \text{a.s.}
\end{align}
for each $\lambda_1 <\delta/32$ by \eqref{eq:Gordin_decomposition}, \eqref{eq:embedding}, \eqref{eq:u_j_upperbound}, Lemma \ref{lemma:blocking_remainder}, \eqref{eq:sum_tau_j_asymptotic} and Hanson and Russo \cite[Theorem 3.2A]{HansonRusso83}.

\section*{Acknowledgements}
I am very grateful to my supervisor Masato Takei for his helpful comments on this paper. This work was supported by JST SPRING, Japan Grant Number JPMJSP2178.

\appendix
\section{Some technical results}\label{appendix:some_technical_results}
We frequently use the following Abel--Dini--Pringsheim Theorem.
\begin{theorem}[see e.g., Knopp \cite{Knopp56InfiniteSequences}, pp.~ 125--126]\label{thm:AbelDiniPringsheim}
    Let $\{d_n\}_{n\ge 1}$ be a positive sequence with $\sum_{n=1}^{\infty} d_n = +\infty$, and $D_n=d_1+\dots +d_n$ be its partial sums. Then
    \begin{equation}
      \sum_{n=1}^{\infty} \frac{d_n}{D_n}=+\infty,\quad \text{and} \quad\sum_{n=1}^{\infty} \frac{d_n}{D_n D_{n-1}^{\varepsilon}} <+\infty
    \end{equation}
    for any $\varepsilon > 0$.
  \end{theorem}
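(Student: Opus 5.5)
The plan is to compare both series with integrals of the decreasing function $x\mapsto x^{-1}$ and $x\mapsto x^{-1-\varepsilon}$ over the intervals $[D_{n-1},D_n]$, with $D_0:=0$. Divergence is handled by a Cauchy-criterion argument. Convergence is handled by a telescoping bound.

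\emph{Divergence.} For $N>M$, since $D_n\le D_N$ for $M<n\le N$, I get
\begin{equation}
\sum_{n=M+1}^{N}\frac{d_n}{D_n}\ge \frac{1}{D_N}\sum_{n=M+1}^{N}d_n=1-\frac{D_M}{D_N}.
\end{equation}
Because $D_N\to\infty$, for each fixed $M$ the right-hand side tends to $1$ as $N\to\infty$. Hence the tails never become smaller than $1/2$, the Cauchy criterion fails, and $\sum d_n/D_n=+\infty$.

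\emph{Convergence.} Fix $\varepsilon>0$. The term $n=1$ involves $D_0^{\varepsilon}=0$, so I interpret the sum as starting at $n=2$, or equivalently discard the finitely many terms with $D_{n-1}=0$; this does not affect convergence. For $n\ge2$ with $D_{n-1}>0$, I use the comparison
\begin{equation}
\frac{d_n}{D_nD_{n-1}^{\varepsilon}}\le \frac{1}{\varepsilon}\Bigl(\frac{1}{D_{n-1}^{\varepsilon}}-\frac{1}{D_n^{\varepsilon}}\Bigr).
\end{equation}
One route to this is the integral bound $\int_{D_{n-1}}^{D_n}x^{-1-\varepsilon}\,dx\ge d_n D_n^{-1-\varepsilon}$, but that controls $d_nD_n^{-1}D_n^{-\varepsilon}$, which is weaker than needed. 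So I prove the comparison directly. Set $t=D_{n-1}/D_n\in(0,1]$. After multiplying through by $D_n^{\varepsilon}$, the claimed inequality reads $(1-t)t^{-\varepsilon}\le \varepsilon^{-1}(t^{-\varepsilon}-1)$, that is, $\varepsilon(1-t)\le 1-t^{\varepsilon}$.

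This last inequality is elementary in both ranges of $\varepsilon$. For $\varepsilon\le1$, the map $t\mapsto t^{\varepsilon}$ is concave, so $t^{\varepsilon}\le 1+\varepsilon(t-1)$, which is exactly what is needed. For $\varepsilon>1$ the concavity argument is unavailable, so I use a different bound: since $D_{n-1}\le D_n$,
\begin{equation}
\frac{d_n}{D_nD_{n-1}^{\varepsilon}}\le\frac{d_n}{D_nD_{n-1}^{\varepsilon'}}\,D_{n-1}^{-(\varepsilon-\varepsilon')},
\end{equation}
which reduces the case $\varepsilon>1$ to $\varepsilon'=1$. The extra factor $D_{n-1}^{-(\varepsilon-\varepsilon')}$ is bounded once $D_{n-1}\ge1$, and since $D_n\to\infty$ that holds for all but finitely many $n$.

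Summing the comparison over $n\ge n_1$, where $n_1$ is the first index with $D_{n_1-1}>0$, the right-hand side telescopes to at most $\varepsilon^{-1}D_{n_1-1}^{-\varepsilon}<\infty$. This gives convergence of the second series.

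The main obstacle is only the exact form of the convergence comparison: it must carry $D_{n-1}^{\varepsilon}$ rather than $D_n^{\varepsilon}$, and for that the $t$-substitution above is the key step.
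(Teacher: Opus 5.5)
Your proof is correct. The paper gives no proof of its own and only cites Knopp, and your argument is essentially the classical one found there: the tail bound $\sum_{n=M+1}^{N} d_n/D_n\ge 1-D_M/D_N$ for divergence, and for convergence the Bernoulli/concavity inequality $\varepsilon(1-t)\le 1-t^{\varepsilon}$ for $0<\varepsilon\le 1$, giving a telescoping bound, with $\varepsilon>1$ reduced to $\varepsilon=1$ once $D_{n-1}\ge 1$. Your treatment of the $n=1$ term, where $D_0=0$, is the right reading of the statement. One cosmetic point: your displayed reduction for $\varepsilon>1$ is an identity, so it does not need $D_{n-1}\le D_n$.
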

We  have an upper bound for the fourth moment of $S_n$ defined by \eqref{eq:Def_ERWVRP_with_variable_step_length}.
\begin{lemma}[{Yoshihara \cite[Theorem 1]{Yoshihara78moment}}]\label{lemma:4th_moment}
  There exists a positive constant $K_2>0$, such that, for any $\{a_k\}_{k\ge 1}$ and any integers $n>m\ge 1$,
  \begin{equation}
    E[(S_n -S_m)^4] \le K_2 (A_n -A_m)^2.
  \end{equation}
\end{lemma}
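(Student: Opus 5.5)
We prove the fourth moment bound $E[(S_n-S_m)^4]\le K_2(A_n-A_m)^2$ directly. Write $T:=S_n-S_m=\sum_{k=m+1}^n a_kX_k$ and expand
\begin{equation}
  E[T^4]=\sum_{m<i,j,k,l\le n} a_ia_ja_ka_l\,E[X_iX_jX_kX_l].
\end{equation}
The chain is a two-state symmetric Markov chain, so the mixed moments can be computed exactly. By symmetry we order the indices as $i\le j\le k\le l$ (this costs a combinatorial factor of at most $24$). Since $X_t^2=1$, the Markov property and $E[X_{t+s}\mid X_t]=\alpha^sX_t$ give
\begin{equation}
  E[X_iX_jX_kX_l]=\alpha^{j-i}\alpha^{l-k}.
\end{equation}
To see this, condition on $\mathcal F_k$ to replace $X_l$ by $\alpha^{l-k}X_k$; then $X_kX_k=1$, which leaves $\alpha^{l-k}E[X_iX_j]=\alpha^{l-k}\alpha^{j-i}$. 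Hence
\begin{equation}
  E[T^4]\le 24\sum_{i\le j\le k\le l}|a_i||a_j||a_k||a_l|\,|\alpha|^{j-i}|\alpha|^{l-k}.
\end{equation}

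Next we bound the pairs $(i,j)$ and $(k,l)$ separately. By AM--GM, $|a_i||a_j|\le \tfrac12(a_i^2+a_j^2)$. Summing over $j\ge i$ with the weight $|\alpha|^{j-i}$, or over $i\le j$ with the same weight, gives at most $(1-|\alpha|)^{-1}$. Therefore
\begin{equation}
  \sum_{m<i\le j\le n}|a_ia_j||\alpha|^{j-i}\le \frac{1}{1-|\alpha|}\,(A_n-A_m).
\end{equation}
If we simply drop the ordering constraint $j\le k$ between the two pairs, the quadruple sum is bounded by the product of two such pair sums. This yields
\begin{equation}
  E[T^4]\le \frac{24}{(1-|\alpha|)^2}\,(A_n-A_m)^2,
\end{equation}
so we may take $K_2=24(1-|\alpha|)^{-2}$. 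This constant is finite for all $p\in(0,1)$, since $|\alpha|<1$, and it does not depend on $\{a_k\}$, $m$ or $n$.

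The only delicate step is the moment identity for ordered indices when some of them coincide. The identity still holds in that case, because $X_t^2=1$ and the conditioning argument goes through unchanged. The bound on ordered quadruples by $24$ times the ordered sum is valid because $E[X_iX_jX_kX_l]$ depends only on the multiset of indices, and each unordered quadruple corresponds to at most $24$ ordered ones.

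Alternatively, one could cite Yoshihara's theorem for $\phi$-mixing sequences with $\sum_m \phi(m)^{1/2}<\infty$, which holds here because $\phi(m)=|\alpha|^m/2$. The direct computation above avoids needing that general result.
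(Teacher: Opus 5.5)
Your proof is correct, but it takes a different route from the paper. The paper gives no computation: it invokes Yoshihara's general fourth-moment inequality for $\phi$-mixing sequences, using $\phi(m)=|\alpha|^m/2$.

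You instead use the exact structure of the two-state chain. Iterating \eqref{eq:step_conditional_exp} gives $E[X_l\mid\mathcal{F}_k]=\alpha^{l-k}X_k$, and $X_t^2=1$, so for sorted indices the mixed moment factorizes as $\alpha^{(j-i)+(l-k)}$, coincident indices included. From there your steps all check out:
\begin{itemize}
\item the symmetrization factor $24$;
\item dropping the constraint $j\le k$, which is legitimate because all terms are nonnegative after taking absolute values;
\item the AM--GM bound on each pair sum.
\end{itemize}
This yields the explicit constant $K_2=24(1-|\alpha|)^{-2}$. It depends only on $p$, which is all the statement requires, since the process is defined for a fixed $p$.

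Your approach gives a self-contained, elementary proof with an explicit constant. It also avoids having to check that the hypotheses of a general mixing moment inequality hold for the weighted, non-identically distributed summands $a_kX_k$. The paper's citation buys generality instead: it does not rely on the product formula for mixed moments, which is special to $\pm1$-valued Markov chains. It would therefore apply to other $\phi$-mixing sequences with a fast enough mixing rate and controlled fourth moments.

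Your closing remark about the precise hypotheses of Yoshihara's theorem is only an aside and is not needed for your argument.
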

We use the following covariance inequalities (Ibragimov's inequalities) for general $\phi$-mixing sequences.
\begin{lemma}[Ibragimov \cite{Ibragimov62}]\label{lemma:Ibragimov}
  Let $U$ be $\sigma\{X_i\colon 1\le i\le n\}$-measurable and $V$ be $\sigma\{X_i\colon n+m\le i\}$-measurable for some integers $n,m\ge 1$. Let $r^{-1}+s^{-1}=1$ with $r> 1$. Suppose that $E[|U|^r]<\infty$ and $E[|V|^s]<\infty$. Then,
  \begin{equation}
    |E[UV]-E[U]E[V]| \le 2 (E[|U|^r])^{1/r} (E[|V|^s])^{1/s} \phi(m)^{1/r}.
  \end{equation}
\end{lemma}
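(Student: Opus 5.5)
The plan is to reduce the covariance to a bound on the centered conditional expectation of $V$, and then to control that conditional expectation atom by atom using the total variation of the signed measure $P(\cdot\mid A)-P$.

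\textbf{Step 1 (reduction).} Write $\mathcal{F}_n=\sigma\{X_i\colon 1\le i\le n\}$. Since $U$ is $\mathcal{F}_n$-measurable,
\[
E[UV]-E[U]E[V]=E\bigl[U\,(E[V\mid\mathcal{F}_n]-E[V])\bigr].
\]
H\"older's inequality with exponents $r,s$ then gives
\[
|E[UV]-E[U]E[V]|\le (E[|U|^r])^{1/r}\,\bigl\|E[V\mid \mathcal{F}_n]-E[V]\bigr\|_s .
\]
So it suffices to prove
\[
\bigl\|E[V\mid \mathcal{F}_n]-E[V]\bigr\|_s\le 2\,\phi(m)^{1/r}\,(E[|V|^s])^{1/s}.
\]

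\textbf{Step 2 (atomic case).} Here I use the definition of $\phi$-mixing: $|P(B\mid A)-P(B)|\le\phi(m)$ for every atom $A$ of $\mathcal{F}_n$ with $P(A)>0$ and every $B\in\sigma\{X_i\colon i\ge n+m\}$. Because the $X_i$ are $\{\pm1\}$-valued, $\mathcal{F}_n$ is generated by finitely many atoms $A$. On such an atom,
\[
E[V\mid\mathcal{F}_n]-E[V]=\int V\,d\nu_A,\qquad \nu_A:=P(\cdot\mid A)-P,
\]
where $\nu_A$ is viewed as a signed measure on the future $\sigma$-field. Applying the mixing bound to the positive and negative parts of the Hahn decomposition gives total variation $|\nu_A|(\Omega)\le 2\phi(m)$. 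Since $|\nu_A|\le P(\cdot\mid A)+P$, H\"older's inequality with respect to $|\nu_A|$ yields
\[
\Bigl|\int V\,d\nu_A\Bigr|\le (2\phi(m))^{1/r}\bigl(E[|V|^s\mid A]+E[|V|^s]\bigr)^{1/s}.
\]
Raising this to the power $s$ and averaging over the atoms $A$ gives
\[
\bigl\|E[V\mid\mathcal{F}_n]-E[V]\bigr\|_s^s\le (2\phi(m))^{s/r}\cdot 2E[|V|^s].
\]
Taking $s$-th roots, the constant is $2^{1/r}\cdot 2^{1/s}=2$, which is the desired estimate.

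\textbf{Step 3 (approximation).} The only technical point, which I expect to be the main obstacle, is that $V$ may depend on infinitely many future coordinates. To handle this, first take $V$ measurable with respect to $\sigma\{X_i\colon n+m\le i\le N\}$, where Step 2 applies verbatim because everything is atomic. For general $V\in L^s$, set $V_N:=E[V\mid \sigma\{X_i\colon n+m\le i\le N\}]$. By martingale convergence, $V_N\to V$ in $L^s$, and $E[|V_N|^s]\le E[|V|^s]$. Hence both sides of the inequality pass to the limit: the left side by H\"older, since $U\in L^r$. This completes the plan.
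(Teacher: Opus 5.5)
Your proof is correct. It is essentially the classical argument behind the cited result; the paper gives no proof of its own and only refers to Ibragimov. That argument conditions on the past, bounds the total variation of $P(\cdot\mid A)-P$ on the future $\sigma$-field by $2\phi(m)$, and applies H\"older twice to get the constant $2^{1/r}2^{1/s}=2$.

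Step 3 is not needed, because the signed-measure H\"older bound in Step 2 already holds for every future-measurable $V\in L^s$. The finiteness you actually use is on the past side, namely that $\mathcal{F}_n$ has finitely many atoms. So for a non-discrete $\phi$-mixing sequence you would instead condition on $\sigma(U)$ for simple $\mathcal{F}_n$-measurable $U$, and then approximate $U$ in $L^r$.
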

\begin{lemma}[Ibragimov \cite{Ibragimov62}]\label{lemma:Ibragimov2}
  Let $U$ be $\sigma\{X_i\colon 1\le i\le n\}$-measurable and $V$ be $\sigma\{X_i\colon n+m\le i\}$-measurable for some integers $n,m\ge 1$. Suppose that $|U|\le 1$ a.s. and $|V|\le 1$ a.s. Then,
  \begin{equation}
    |E[UV]-E[U]E[V]| \le 4 \phi(m).
  \end{equation}
\end{lemma}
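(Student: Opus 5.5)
The plan is to reduce the covariance to a bound on how far the conditional law of the future, given the past, can deviate from its unconditional law. This deviation is exactly what the $\phi$-mixing coefficient controls. I use the standard definition, which is the sense in which the preceding lemma asserts \eqref{eq:phi_definition}: for every $n\ge 1$, every $A\in\mathcal{F}_n=\sigma\{X_i\colon 1\le i\le n\}$ with $P(A)>0$, and every $B\in\sigma\{X_i\colon i\ge n+m\}$,
\begin{equation}
  |P(B\mid A)-P(B)|\le \phi(m).
\end{equation}
Throughout, write $\mathcal{F}=\mathcal{F}_n$.

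\textbf{Step 1 (conditioning on the past).} Since $U$ is $\mathcal{F}$-measurable and $|U|\le 1$,
\begin{equation}
  |E[UV]-E[U]E[V]| = \bigl|E\bigl[U\,(E[V\mid\mathcal{F}]-E[V])\bigr]\bigr| \le E\bigl|E[V\mid\mathcal{F}]-E[V]\bigr|.
\end{equation}
It therefore suffices to prove
\begin{equation}
  E\bigl|E[V\mid\mathcal{F}]-E[V]\bigr|\le 2\phi(m).
\end{equation}
This gives the claimed bound with the constant $2$, which is better than the $4$ in the statement.

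\textbf{Step 2 (removing the absolute value).} Let $A^{+}=\{E[V\mid\mathcal{F}]\ge E[V]\}$ and $A^{-}=(A^{+})^{c}$; both lie in $\mathcal{F}$. Then
\begin{equation}
  E\bigl|E[V\mid\mathcal{F}]-E[V]\bigr| = \bigl(E[V\mathbf{1}_{A^{+}}]-E[V]P(A^{+})\bigr) - \bigl(E[V\mathbf{1}_{A^{-}}]-E[V]P(A^{-})\bigr).
\end{equation}
For any $A\in\mathcal{F}$ with $P(A)>0$ we have
\begin{equation}
  E[V\mathbf{1}_{A}]-E[V]P(A)=P(A)\bigl(E[V\mid A]-E[V]\bigr),
\end{equation}
and terms with $P(A)=0$ vanish.

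\textbf{Step 3 (the key estimate, and the only real point).} I claim that $|E[V\mid A]-E[V]|\le 2\phi(m)$ whenever $A\in\mathcal{F}$, $P(A)>0$, and $V$ is future-measurable with $|V|\le 1$. Consider the signed measure $\nu(\cdot)=P(\cdot\mid A)-P(\cdot)$ restricted to $\mathcal{G}=\sigma\{X_i\colon i\ge n+m\}$. It has total mass $0$, so its positive and negative parts have equal mass, and that common mass equals $\sup_{B\in\mathcal{G}}\nu(B)\le\phi(m)$. Hence the total variation satisfies $\|\nu\|\le 2\phi(m)$. Since $V$ is $\mathcal{G}$-measurable with $|V|\le 1$,
\begin{equation}
  \Bigl|\int V\,d\nu\Bigr|\le\|\nu\|\le 2\phi(m).
\end{equation}
The step needing care is precisely this passage from the set-wise mixing bound to a bound for integrals of bounded functions. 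If one prefers to avoid measure-theoretic total variation, the same bound follows by approximating $V$ uniformly by simple $\mathcal{G}$-measurable functions: split such a function into its positive and negative parts and apply the mixing bound to the level sets.

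\textbf{Step 4 (conclusion).} Combining Steps 2 and 3,
\begin{equation}
  E\bigl|E[V\mid\mathcal{F}]-E[V]\bigr| \le 2\phi(m)\bigl(P(A^{+})+P(A^{-})\bigr) = 2\phi(m).
\end{equation}
With Step 1 this gives $|E[UV]-E[U]E[V]|\le 2\phi(m)\le 4\phi(m)$, which is the statement of Lemma~\ref{lemma:Ibragimov2}.
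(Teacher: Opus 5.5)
Your proof is correct, and it actually gives the sharper bound $|E[UV]-E[U]E[V]|\le 2\phi(m)$. The paper has no argument of its own for this lemma; it is quoted from Ibragimov. So the relevant comparison is with the classical proof that produces the constant $4$.

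That proof begins like your Step 1, but then symmetrizes a second time. It replaces $V$ by a $\pm1$-valued variable measurable with respect to your $\mathcal{G}$. The resulting covariance expands into four terms $\pm\bigl(P(A'\cap B')-P(A')P(B')\bigr)$, each bounded by $\phi(m)$, which gives $4\phi(m)$. That route uses only the unconditional quantity $|P(A\cap B)-P(A)P(B)|$, so it works verbatim under strong mixing, but it loses a factor $2$.

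Your Step 3 instead uses the fact that $\phi(m)$ bounds $|P(B\mid A)-P(B)|$ uniformly in $B\in\mathcal{G}$. Since $P(\cdot\mid A)-P(\cdot)$ has total mass zero, this bounds its total variation on $\mathcal{G}$ by $2\phi(m)$, and that is exactly where the factor is saved.

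Your convention for $\phi$ (conditioning on the past event) is the one consistent with Lemma~\ref{lemma:Ibragimov.}.
Your constant can also be read off from that lemma: for $|U|,|V|\le 1$ it gives $2\phi(m)^{1/r}$ for every $r>1$, and letting $r\downarrow 1$ yields $2\phi(m)$.
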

Billingsley's maximal inequality is also useful.
\begin{lemma}[{Billingsley \cite[Theorem 10.2]{Billingsley}}]\label{lemma:Billingsley_maximal_ineq}
  Let $\{Z_k\}_{k=1}^{\infty}$ be a sequence of random variables and let $\gamma\ge 0$ and $\theta >1$. Suppose that there exists a sequence $\{c_k\}_{k=1}^{\infty}$ of non-negative numbers such that
  \begin{equation}
    P\biggl(\biggl|\sum_{k=m}^{n} Z_k\biggr| \ge  \lambda\biggr) \le \lambda^{-\gamma} \biggl(\sum_{k=m}^{n} c_k\biggr)^{\theta}
  \end{equation}
  holds for all positive $\lambda$ and all integers $1\le m < n\le N$. Then, for all positive $\lambda$,
  \begin{equation}
    P\biggl(\max_{m < \ell \le N}\biggl|\sum_{k=m}^{\ell} Z_k\biggr| \ge  \lambda\biggr) \ll \lambda^{-\gamma} \biggl(\sum_{k=m}^{N} c_k\biggr)^{\theta}.
  \end{equation}
  Here $\ll$ depends only on $\gamma$ and $\theta$.
\end{lemma}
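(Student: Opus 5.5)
The statement to prove is Billingsley's maximal inequality (Lemma \ref{lemma:Billingsley_maximal_ineq}). The plan is the standard induction on the number of summands, with a splitting point chosen by mass. Write $u_{m,n} = \sum_{k=m}^{n} c_k$; since $\theta>1$, the map $(m,n)\mapsto u_{m,n}^\theta$ is superadditive, meaning $u_{m,\ell}^\theta + u_{\ell+1,n}^\theta \le u_{m,n}^\theta$. I will prove by induction on the length $N-m$ that
\[
P\Bigl(\max_{m<\ell\le N}\Bigl|\sum_{k=m}^{\ell} Z_k\Bigr|\ge\lambda\Bigr)\le K_{\gamma,\theta}\,\lambda^{-\gamma}u_{m,N}^{\theta},
\]
with a constant $K_{\gamma,\theta}$ that does not depend on the length. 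The constant is fixed at the end by requiring that the inductive step closes. The base cases are trivial: for short intervals the hypothesis applies directly, possibly after enlarging the constant.

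For the inductive step, choose the index $h$ in $[m,N]$ where the partial mass $u_{m,h}$ first exceeds $u_{m,N}/2$. Then both $u_{m,h-1}$ and $u_{h+1,N}$ are at most $u_{m,N}/2$. For $\ell<h$, the partial sum $\sum_{m}^{\ell}$ is controlled by the left block maximum. For $\ell\ge h$, the partial sum splits as $\sum_m^{h}+\sum_{h+1}^{\ell}$, so $\bigl|\sum_m^\ell\bigr| \le \bigl|\sum_m^{h}\bigr| + \max_{\ell'}\bigl|\sum_{h+1}^{\ell'}\bigr|$. To absorb the single boundary term $Z_h$ I will use $|\sum_m^h|\le \min(|\sum_m^{h-1}|+|Z_h|,\ |\sum_m^N| + |\sum_{h+1}^N|)$, the usual Billingsley trick. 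Splitting $\lambda=\lambda_0+\lambda_1+\lambda_1$ with $\lambda_0$ close to $\lambda$ bounds the event by three pieces: the direct hypothesis applied to $|\sum_m^N|\ge\lambda_0$ or to the endpoint sums, plus the two half-block maxima evaluated at level $\lambda_1$.

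Apply the induction hypothesis to the two halves. Each half contributes at most $K\lambda_1^{-\gamma}(u_{m,N}/2)^\theta$. This gives a recursion of the form
\[
P(\cdots)\le \lambda^{-\gamma}u_{m,N}^\theta\Bigl(C_0 + 2K\,2^{-\theta}(\lambda/\lambda_1)^{\gamma}\Bigr)
\quad\text{or}\quad
\lambda^{-\gamma}u^\theta_{m,N}\bigl(C_0+ K\,2^{1-\theta}(\lambda/\lambda_1)^\gamma\bigr).
\]
The recursion closes if $2^{1-\theta}(\lambda/\lambda_1)^\gamma<1$. Because $\theta>1$, I can pick $\lambda_1=\lambda(1-\rho)/2$ with $\rho$ small so that this holds, and then take $K\ge C_0/(1-2^{1-\theta}(\lambda/\lambda_1)^\gamma)$.

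The main obstacle is the bookkeeping around the boundary point $h$. A naive split leaves a lone term $Z_h$ whose mass need not be small, so the halves cannot simply be summed. This is exactly where the min-of-two-bounds trick, the geometric choice of $\lambda_0,\lambda_1$, and the strict convexity from $\theta>1$ are needed. If this gets messy, I would instead cite Billingsley's Theorem 10.2 directly, since the statement is quoted from there.
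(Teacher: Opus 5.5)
There is a genuine gap: your recursion does not close. With $\lambda=\lambda_0+\lambda_1+\lambda_1$ you have $\lambda_1<\lambda/2$. So the coefficient of $K$ is $2^{1-\theta}(\lambda/\lambda_1)^{\gamma}>2^{1-\theta+\gamma}$, which is below $1$ only when $\gamma<\theta-1$. The sentence ``because $\theta>1$, I can pick $\lambda_1=\lambda(1-\rho)/2$'' is therefore false for general $\gamma\ge 0$. Taking $\lambda_0$ close to $\lambda$, as you first say, only makes $\lambda_1$ smaller. The failure occurs in exactly the cases the paper needs: Chebyshev gives $\gamma=2$ with $\theta$ slightly above $1$, and the fourth-moment bound gives $\gamma=4$, $\theta=2$. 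Halving the mass gains only the factor $2^{1-\theta}$. That can absorb a loss $(1-\rho)^{-\gamma}$ close to $1$, but never a loss of $2^{\gamma}$. So each half-block maximum must be tested at a level close to $\lambda$, which means the two half-block maxima must enter through a maximum, not a sum.

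The ``lone term'' is not the real obstacle; your min-of-two-bounds step is what causes the damage. Write $S_\ell:=\sum_{k=m}^{\ell}Z_k$, let $L$ be the left-block maximum, and let $R:=\max_{h\le\ell\le N}|\sum_{k=h+1}^{\ell}Z_k|$. You already had $\max_{m<\ell\le N}|S_\ell|\le\max(L,\,|S_h|+R)$, and $S_h$ is itself a partial sum covered by the hypothesis with mass at most $u_{m,N}$. Hence
\begin{align*}
P\Bigl(\max_{m<\ell\le N}|S_\ell|\ge\lambda\Bigr)&\le P(L\ge\lambda)+P(|S_h|\ge\rho\lambda)+P\bigl(R\ge(1-\rho)\lambda\bigr)\\
&\le\lambda^{-\gamma}u_{m,N}^{\theta}\Bigl(K2^{-\theta}\bigl(1+(1-\rho)^{-\gamma}\bigr)+\rho^{-\gamma}\Bigr).
\end{align*}
This closes once $\rho$ is small, since $2^{-\theta}(1+(1-\rho)^{-\gamma})\to 2^{1-\theta}<1$. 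The endpoint cases, and the single-term sum $\ell=h+1$ in the right block, are routine bookkeeping: apply the hypothesis directly to one or two more partial sums and enlarge the constant. Bounding $|S_h|$ instead by $L+|Z_h|$, or by $|S_N|+|\sum_{k=h+1}^{N}Z_k|$, puts a block maximum back in additively and drives the level down to about $\lambda/2$.

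What you call Billingsley's trick is also not what he does. The paper gives no proof and simply cites his Theorem 10.2, which is legitimate. Billingsley first proves, by the same halving induction, a bound for $\max_{\ell}\min(|S_\ell|,|S_N-S_\ell|)$. There the min-maximum is dominated by the larger of two quantities: the left-block min-maximum plus a boundary term, and the right-block min-maximum plus a boundary term. Both halves are thus again tested at level $\lambda_1=(1-\rho)\lambda$ with $\lambda=\lambda_0+\lambda_1$. He then concludes via $\max_\ell|S_\ell|\le\max_\ell\min(|S_\ell|,|S_N-S_\ell|)+|S_N|$.
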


   The following lemma is the Skorokhod embedding theorem for martingales.
  \begin{lemma}[{see e.g., Hall and Heyde \cite[Theorem A.1]{HallHeydemart}}]\label{lemma:Skorokhodembeddingtheorem}
    Let $\{M_n\}_{n=1}^{\infty}$ be a zero-mean and square-integrable martingale with respect to the $\sigma$-field $\mathcal{F}_n$ generated by $M_1,\dots,M_n$. Then there exists a standard Brownian motion $\{B(t) \colon t\ge 0\}$ and a non-decreasing sequence $\{T_n\}_{n=1}^{\infty}$ of random variables such that
    \begin{equation}\label{eq:embedding}
      M_n = B (T_n) \quad \text{a.s.}\quad \text{for all $n\ge 1$},
    \end{equation}
    and there exists a positive constant $C_1 > 0$ such that
    \begin{align}
      E[\tau_n\mid \mathcal{G}_{n-1}] &= E[(\Delta M_n)^2 \mid \mathcal{F}_{n-1}] \quad \text{a.s., and} \label{eq:Skorokhod_tau_n_expectation}\\
      E[ (\tau_n)^2 \mid \mathcal{G}_{n-1}] &\le C_1 E[(\Delta M_n)^4 \mid \mathcal{F}_{n-1}]\quad \text{a.s.,}\label{eq:Skorokhod_tau_n_square_expectation}
    \end{align}
    where $M_0:= 0$, $T_0:=0$, $\tau_n := T_n-T_{n-1}$, $\Delta M_n := M_n-M_{n-1}$, and $\mathcal{G}_{n}$ is the $\sigma$-field generated by $\Delta M_1,\dots, \Delta M_n$ and by $ \{B(t) \colon 0\le t\le T_n \}$.
  \end{lemma}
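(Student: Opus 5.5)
The plan is to build the embedding one step at a time, from the classical Skorokhod embedding of a single mean-zero law, using conditional distributions and the strong Markov property of Brownian motion. This is the standard route of Hall and Heyde, so the statement can be taken from \cite[Theorem A.1]{HallHeydemart}; the sketch below shows how the three properties \eqref{eq:embedding}, \eqref{eq:Skorokhod_tau_n_expectation} and \eqref{eq:Skorokhod_tau_n_square_expectation} arise.

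\emph{Step 1 (one distribution).} Let $F$ be a law on $\mathbb{R}$ with mean $0$ and finite second moment. Write $F$ as a mixture of two-point laws $\{u,v\}$ with $u<0<v$, each having mean $0$. Realise this by drawing an independent random pair $(U,V)$ with the mixing law and setting
\begin{equation}
  \tau:=\inf\{t\ge 0 \colon W(t)\notin (U,V)\},
\end{equation}
where $W$ is a Brownian motion independent of $(U,V)$. Then $W(\tau)\sim F$. Since $W(t)^2-t$ is a martingale and $W(\cdot\wedge\tau)$ is bounded given $(U,V)$, we get $E[\tau]=E[W(\tau)^2]=\int x^2\,F(dx)$. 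For the second moment, the Burkholder--Davis--Gundy inequality applied to the stopped martingale, together with Doob's inequality for $\sup_{s\le\tau}|W(s)|$, gives $E[\tau^2]\le C_1 E[W(\tau)^4]$ with a universal constant $C_1$.

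\emph{Step 2 (iteration).} Suppose $T_{n-1}$ is constructed and $B(T_k)=M_k$ for $k<n$. Take the regular conditional law of $\Delta M_n$ given $\mathcal{F}_{n-1}$; it is a mean-zero law by the martingale property and exists because the state space is Polish. Apply Step 1 to this law, using the post-$T_{n-1}$ Brownian motion $B(T_{n-1}+\cdot)-B(T_{n-1})$ and fresh independent randomisation; this requires enlarging the probability space by a product with uniform variables. This defines $\tau_n$ and $T_n=T_{n-1}+\tau_n$, and gives $\Delta M_n = B(T_n)-B(T_{n-1})$. By induction the joint law of $(B(T_1),\dots,B(T_n))$ equals that of $(M_1,\dots,M_n)$, so, after redefining $M_n$ on the enlarged space, \eqref{eq:embedding} holds.

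\emph{Step 3 (conditional moments).} By the strong Markov property at $T_{n-1}$, the post-$T_{n-1}$ increment is a Brownian motion independent of $\mathcal{G}_{n-1}$. Hence the identities of Step 1 hold conditionally on $\mathcal{G}_{n-1}$, with $F$ replaced by the conditional law of $\Delta M_n$. This yields \eqref{eq:Skorokhod_tau_n_expectation} and \eqref{eq:Skorokhod_tau_n_square_expectation} with the same $C_1$.

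\emph{Main obstacle.} The delicate point is the measurability in Step 2. The mixing law of $(U,V)$ must depend measurably on the conditional distribution. Moreover, conditioning on the richer $\sigma$-field $\mathcal{G}_{n-1}$, which includes the Brownian path up to $T_{n-1}$, must not change the conditional law of $\Delta M_n$ given $\mathcal{F}_{n-1}$. I would handle this by making every randomisation an explicit measurable function of the conditional law and of independent uniforms. The required conditional independence then follows from the construction.
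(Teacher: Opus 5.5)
Your proposal is correct and matches the paper, which gives no argument of its own but simply cites Hall and Heyde's Theorem A.1. Your sketch is the standard proof of that theorem: a randomized two-point embedding of each regular conditional law of $\Delta M_n$, glued together by the strong Markov property at $T_{n-1}$, producing an equidistributed copy of $\{M_n\}$ as the paper's ``redefine the process'' usage requires, with the Burkholder--Davis--Gundy plus Doob step supplying a universal constant $C_1$ in \eqref{eq:Skorokhod_tau_n_square_expectation}.
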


  \section{Limsup in \eqref{eq:LIL_scaled_A_n}}\label{appendix:limsup_scaled_A_n}
  Fix $p\in (0,1)$. We show that
  \begin{equation}
    \limsup_{n\to \infty} \frac{S_n }{\sqrt{2A_n \log\log A_n} } = \sqrt{\frac{2p^2-2p+1}{2p(1-p)}} \quad \text{a.s.}\label{eq:LIL_scaled_A_n_intermediate}
  \end{equation}
  holds for
  \begin{equation}
    a_k :=
    \begin{dcases*}
      1 & for odd $k$,\\
      0 & for even $k$.
    \end{dcases*}
  \end{equation}
  We obtain
  \begin{align}
    s_n^2 &= \sum_{k=1}^{n} a_k^2 + 2\sum_{1\le k < l \le n}^{} a_k a_l (2p-1)^{l-k}\\
    &=\ceil{\frac{n}{2}} + 2 \sum_{i=1}^{ \ceil{\frac{n}{2}} -1 } \biggl(\ceil{\frac{n}{2}} - i \biggr) (2p-1)^{2i} \\
    &\sim \ceil{\frac{n}{2}} + 2 \ceil{\frac{n}{2}} \sum_{i=1}^{\infty} (2p-1)^{2i} = \frac{2p^2-2p+1}{2p(1-p)} \cdot \ceil{\frac{n}{2}}\quad \text{as $n\to\infty$},
  \end{align}
  which implies the LIL \eqref{eq:LIL_scaled_A_n_intermediate}.

\end{document}